\def\centerarc[#1](#2)(#3:#4:#5) [draw options] (center) (initial angle:final angle:radius) { \draw[#1] ($(#2)+({#5*cos(#3)},{#5*sin(#3)})$) arc (#3:#4:#5); }
\newtheorem{theorem}{Theorem}[section]
\newtheorem{lemma}[theorem]{Lemma}
\newtheorem{proposition}[theorem]{Proposition}
\newtheorem{corollary}[theorem]{Corollary}
\newtheorem{definition}[theorem]{Definition}
\newtheorem{remark}[theorem]{Remark}
\newtheorem{assumption}{Hypothesis}
\numberwithin{equation}{section}
\DeclareMathOperator{\slim}{s-lim}
\DeclareMathOperator*{\wlim}{w-lim}
\DeclareMathOperator{\Ran}{Ran}
\DeclareMathOperator{\Ker}{Ker}
\DeclareMathOperator{\Id}{Id}
\DeclareMathOperator{\Span}{Span}
\DeclareMathOperator{\im}{Im}
\DeclareMathOperator{\re}{Re}
	\newcommand{\Hi}{\mathcal{H}} 
	\newcommand{\C}{\mathbb{C}} 
	\newcommand{\R}{\mathbb{R}} 
	\newcommand{\N}{\mathbb{N}} 
	\newcommand{\norme}[1]{\left\Vert #1\right\Vert} 
	\newcommand{\scal}[2]{\left\langle{#1} \middle., {#2}\right\rangle} 
	\newcommand{\abso}[1]{\left|{#1}\right|}
	\newcommand{\Res}{\mathcal{R}}
\author[J. Faupin]{J{\'e}r{\'e}my Faupin}
\address[J. Faupin]{Institut Elie Cartan de Lorraine \\
Universit{\'e} de Lorraine,
57045 Metz Cedex 1, France}
\email{jeremy.faupin@univ-lorraine.fr}
\author[N. Frantz]{Nicolas Frantz}
\address[N. Frantz]{Institut Elie Cartan de Lorraine \\
Universit{\'e} de Lorraine,
57045 Metz Cedex 1, France}
\email{nicolas.frantz@univ-lorraine.fr}
\begin{document}
\bibliographystyle{abbrv} \title[Spectral decomposition of some non-self-adjoint operators]{Spectral decomposition of some non-self-adjoint operators}

\begin{abstract}
We consider non-self-adjoint operators in Hilbert spaces of the form $H=H_0+CWC$, where $H_0$ is self-adjoint, $W$ is bounded and $C$ is a metric operator, $C$ bounded and relatively compact with respect to $H_0$. We suppose that $C(H_0-z)^{-1}C$ is uniformly bounded in $z\in\mathbb{C}\setminus\mathbb{R}$. We define the spectral singularities of $H$ as the points of the essential spectrum $\lambda\in\sigma_{\mathrm{ess}}(H)$ such that $C(H\pm i\varepsilon)^{-1}CW$ does not have a limit as $\varepsilon\to0^+$. We prove that the spectral singularities of $H$ are in one-to-one correspondence with the eigenvalues, associated to resonant states, of an extension of $H$ to a larger Hilbert space. Next, we show that the asymptotically disappearing states for $H$, i.e. the set of vectors $\varphi$ such that $e^{\pm itH}\varphi\to0$ as $t\to\infty$, coincide with the generalized eigenstates of $H$ corresponding to eigenvalues $\lambda\in\mathbb{C}$, $\mp\mathrm{Im}(\lambda)>0$. Finally, we define the absolutely continuous spectral subspace of $H$ and show that it satisfies $\Hi_{\mathrm{ac}}(H)=\Hi_{\mathrm{p}}(H^*)^\perp$, where $\Hi_{\mathrm{p}}(H^*)$ stands for the point spectrum of $H^*$. We thus obtain a direct sum decomposition of the Hilbert spaces in terms of spectral subspaces of $H$. One of the main ingredients of our proofs is a spectral resolution formula for a bounded operator $r(H)$ regularizing the identity at spectral singularities. Our results apply to Schrödinger operators with complex potentials.
\end{abstract}

\maketitle

\tableofcontents

\section{Introduction}

We are interested in this paper in the evolution of a quantum system governed by the time-dependent Schrödinger equation
\begin{equation}
i\partial_t \varphi = H \varphi , \label{eq:Schrodinger}
\end{equation}
with a \emph{non-self-adjoint} operator $H$.

Non-self-adjoint `Hamiltonians' in Quantum Mechanics are considered in various contexts, see \cite{Ba15_01,Kr17_01} and references therein. We mention here two frameworks that are especially relevant for our study.

As effective or phenomenological operators, non-self-adjoint operators are used to describe non-conservative phenomena. A celebrated example is the \emph{optical model} in nuclear physics describing both the elastic and inelastic scattering of a neutron -- or a proton -- at a nucleus. It was introduced by Feshbach, Porter and Weisskopf \cite{FePoWe54_01} as an empirical model allowing, in particular, for the description of the formation of a compound nucleus \cite{Bo36_01}. In this model, the unnormalized state of the neutron at a positive time $t$ is given by the solution $\varphi_t=e^{-itH}\varphi$ to \eqref{eq:Schrodinger}, with $H = - \Delta + V(x)$ a dissipative Schrödinger operator in $L^2(\mathbb{R}^d)$, $\mathrm{Im}(V(x))\le0$. Part of the energy of the neutron may be transferred to the nucleus, possibly leading to the absorption, or capture, of the neutron by the nucleus. Mathematically, this is reflected in the dissipative nature of the equation since, in particular, given a normalized initial state $\varphi\in L^2(\mathbb{R}^d)$, we have that $\|e^{-itH}\varphi\|_{L^2}\le 1$ for all $t\ge0$. The probability of absorption 
\begin{equation*}
p_{\mathrm{abs}}(\varphi):=1-\lim_{t\to\infty}\big\|e^{-itH}\varphi\big\|_{L^2}^2
\end{equation*}
 does not vanish in general.
 
 The nuclear optical model leads to predictions that correspond to experimental scattering data to a high precision. Theoretical justifications of the model have been given in \cite{Fe58_01,Fe58_02,Fe62_01} (see also \cite{Ba15_01,Fe92_01,Ho71_01}). The idea of the justification consists in projecting out, in the Schrödinger equation associated to the total system nucleus -- neutron, the degrees of freedom corresponding to the nucleus. This can be performed using Schur's complement formula and leads to a Schrödinger equation for the neutron which is non-linear in energy. The effective, non-self-adjoint Hamiltonian for the neutron is then obtained by averaging over energy. 
 
\emph{$\mathcal{PT}$-symmetric} operators constitute another widely used class of non-self-adjoint operators in Quantum Mechanics. It was observed by Bender and Boettcher \cite{BeBo98_01} that a large class of `$\mathcal{PT}$-invariant Hamiltonians' have real spectra and can therefore be quantum mechanically relevant in many situations. For Schrödinger operators, $H=-\Delta+V(x)$ on $L^2(\mathbb{R}^d)$, with $V$ a complex potential, $\mathcal{PT}$-symmetry means that
\begin{equation*}
[ H , \mathcal{PT} ] = 0,
\end{equation*}
where $\mathcal{P}$ is the parity operator, $(\mathcal{P}\varphi)(x)=\varphi(-x)$, and $\mathcal{T}$ the time-reversal operator, $(\mathcal{T}\varphi)(x)=\overline{\varphi(x)}$. In recent years, $\mathcal{PT}$-symmetric operators have attracted lots of attention, from theoretical studies showing that $\mathcal{PT}$-invariant operators have real spectra under suitable conditions \cite{DoDuTa01_01,Mo02_01,Sh02_01}, to experimental studies revealing $\mathcal{PT}$-symmetry-like structures, in particular in optics \cite{Lo09_01,Re12_01,Ru10_01}. See \cite{El18_01} for more references and more recent developments, and \cite{WeBe20_01} for the study of one-dimensional $\mathcal{PT}$-symmetric Schrödinger operators having continuous spectra.

In this paper, we consider an abstract class of non-self-adjoint operators in a complex Hilbert space $\Hi$, of the form $H=H_0+V$. We suppose that $H_0$ is a self-adjoint operator with purely absolutely continuous spectrum and that $V$ is a relatively compact perturbation of $H_0$. In particular, the essential spectra of $H$ and $H_0$ coincide. We suppose furthermore that $V$ admits a factorization as $V=CWC$, with $W$ bounded and $C$ a strictly positive operator such that
\begin{equation*}
\sup_{z\in\mathbb{C}\setminus\mathbb{R}}\big\|C(H_0-z)^{-1}C\big\|_{\mathcal{L}(\Hi)}<\infty.
\end{equation*}
Such factorizations go back to the seminal work of Kato \cite{Ka65_01}, see also \cite{KoKu66_01}.

We are interested in a \emph{spectral decomposition} of the non-self-adjoint operator $H$, in relation with the long-time behavior of the solutions to \eqref{eq:Schrodinger}. Note that $H$ being a bounded perturbation of $H_0$, the operator $-iH$ generates a strongly continuous group $\{e^{-itH}\}_{t\in\mathbb{R}}$ and hence, for any $\varphi\in\Hi$, \eqref{eq:Schrodinger} admits a global solution $\mathbb{R}\ni t\mapsto e^{-itH}\varphi\in\Hi$.

Roughly speaking, our main contributions can be summarized as follows. First, defining the spaces of \emph{asymptotically disappearing states} as
\begin{equation*}
\Hi_{\mathrm{ads}}^\pm(H):=\Big\{\varphi\in\Hi, \, \lim_{t\to\pm\infty}\big\|e^{-itH}\varphi\big\|_\Hi=0\Big\},
\end{equation*}
 we will show that $\Hi_{\mathrm{ads}}^\pm(H)$ coincide with the vector space spanned by all eigenvectors, or generalized eigenvectors, corresponding to eigenvalues $\lambda$ of $\Hi$ such that $\mp\mathrm{Im}(\lambda)\ge0$. Next, defining the \emph{absolutely continuous spectral subspace} $\Hi_{\mathrm{ac}}(H)$ of $H$ as the closure of
\begin{equation*}
\Big\lbrace \varphi\in\Hi,~\exists\, c_\varphi>0,\forall \psi\in\Hi,~\int_\R\abso{\scal{e^{-itH}\varphi}{\psi}_\Hi}^2\mathrm{d}t\leq c_\varphi\norme{\psi}^2_\Hi \Big\rbrace,
\end{equation*}
we will prove that $\Hi_{\mathrm{ac}}(H)=\Hi_{\mathrm{p}}(H^*)^\perp$, where $H^*$ stands for the adjoint of $H$ and $\Hi_{\mathrm{p}}(H^*)$ is the point spectral subspace of $H^*$, i.e. the vector space spanned by all eigenstates or generalized eigenstates of $H^*$. These characterizations of $\Hi_{\mathrm{ads}}^\pm(H)$ and $\Hi_{\mathrm{ac}}(H)$ in turn imply a $J$-orthogonal decomposition of the Hilbert space (for some conjugation operator $J$) given by
\begin{align*}
		 	\mathcal{H} = \mathcal{H}_{\mathrm{ac}}(H) \oplus \Hi_{\mathrm{ads}}^+(H)\oplus\Hi_{\mathrm{ads}}^-(H)\oplus\Hi_{\mathrm{b}}(H),
\end{align*}
		where $\Hi_{\mathrm{b}}(H)$ is the space of `\emph{bound states}', i.e. the closure of the vector space spanned by all generalized eigenvectors of $H$ corresponding to real eigenvalues.
 
To prove these results, we will require that $H$ only have finitely many eigenvalues (counting algebraic multiplicities) and finitely many \emph{spectral singularities}. As in previous works concerning dissipative operators in Hilbert spaces \cite{Fa21_01,FaFr18_01,FaNi18_01}, the notion of spectral singularities plays a central role in this paper. In our context, we will define a spectral singularity as a point $\lambda$ of the essential spectrum of $H$ such that one of the two limits
\begin{equation*}
\lim_{\varepsilon\to0^+}C(H-\lambda\pm i\varepsilon)^{-1}CW
\end{equation*}
does not exist in the norm topology of $\mathcal{L}(\Hi)$. We will prove that $\lambda$ is a spectral singularity of $H$ if and only if it is an eigenvalue of an extension $H'$ of $H$ to a larger Hilbert space $\Hi'_C$,  corresponding to an eigenstate (a `\emph{resonant}' state) belonging to a suitable subspace of $\Hi'_C$ (the space $\Hi'_C$ will be defined as the anti-dual of $\mathrm{Ran}(C)$, equipped with a suitable norm).

Our results concerning the spaces of asymptotically disappearing states $\Hi_{\mathrm{ads}}^\pm(H)$ generalize previous results for dissipative operators recently obtained in \cite{FaFr18_01}. Our proof is more direct, in particular it does not rely on the scattering theory for the pair $(H,H_0)$, which was a crucial element of the proof in \cite{FaFr18_01}. 
Our results showing that $\Hi_{\mathrm{ac}}(H)=\Hi_{\mathrm{p}}(H^*)^\perp$ seem to be new. It is worth mentioning that, contrary to previous results on absolutely continuous spectral subspaces for non-self-adjoint operators (see \cite{Da80_01} for dissipative operators and \cite{KiNa09_01,Na76_01} and references therein for a more general context), we do not use the theory of dilations of contractive semigroups.

The abstract theory developed in this paper applies to Schrödinger operators $H=-\Delta+V(x)$ on $L^2(\mathbb{R}^d)$, under suitable decay assumptions on the complex potential $V$. In this case, spectral singularities correspond to real resonances. Some of our results on spectral singularities may thus be seen as abstract versions of corresponding well-known properties in the theory of resonances for Schrödinger operators \cite{DyZw19_01}. On the other hand, the characterizations of the subspaces $\Hi_{\mathrm{ads}}^\pm(H)$ and $\Hi_{\mathrm{ac}}(H)$ that we establish seem to be new even in the context of Schrödinger operators.

Before stating our results in more precise terms in Section \ref{sec:results}, we begin with describing in details the abstract setting studied in this paper in Section \ref{sec:setting}.

\medskip

\emph{Notation.} In what follows, given two Hilbert spaces $\Hi_1$, $\Hi_2$, the notation $\mathcal{L}(\Hi_1,\Hi_2)$ stands for the set of bounded operators from $\Hi_1$ to $\Hi_2$. If $\Hi_1=\Hi_2$, we set $\mathcal{L}(\Hi_1):=\mathcal{L}(\Hi_1,\Hi_1)$.

If $E$ is a Banach space and $E'$ its anti-dual, we denote by
\begin{equation*}
\langle u , \Phi \rangle_{E;E'} := \Phi(u),\quad \Phi\in E', \quad u \in E,
\end{equation*}
the usual duality bracket.

The domain of an operator $A$ on a Hilbert space $\Hi$ is denoted by $\mathcal{D}(A)$. The spectrum and resolvent set of $A$ are denoted by $\sigma(A)$ and $\rho(A)$, respectively. For $z\in\rho(A)$, we let 
\begin{equation*}
\mathcal{R}_A(z):=(A-z)^{-1}
\end{equation*}
be the resolvent of $A$. In the case where $A$ is the unperturbed operator $H_0$, we will also use the shorthand $\mathcal{R}_0:=\mathcal{R}_{H_0}$.

We let $\C^\pm:=\{z\in\C, \, \pm\mathrm{Im}(z)>0\}$ and $\bar\C^\pm:=\{z\in\C, \, \pm\mathrm{Im}(z)\ge0\}$. The complex open disc centered at $\lambda$ and of radius $r$ is denoted by
\begin{equation*}
\mathring D(\lambda,r):=\{z\in\C,\,|z-\lambda|<r\}.
\end{equation*}

\section{Abstract setting}\label{sec:setting}

\subsection{The model}	

	Let $(\Hi,\scal{.}{.}_\Hi)$ be a complex separable Hilbert space. On $\Hi$, we consider the operator 
		\begin{equation}\label{eq:defH}
			H:=H_0+V,
		\end{equation}
	where $H_0$ is self-adjoint and semi-bounded from below and $V\in\mathcal{L}(\Hi)$ is a bounded operator. In particular, $H$ is a closed operator with domain $\mathcal{D}(H)=\mathcal{D}(H_0)$ and its adjoint is given by 
	\begin{equation*}
		H^*=H_0+V^*, \quad \mathcal{D}(H^*)=\mathcal{D}(H_0).
	\end{equation*}
	Without loss of generality, we suppose that $H_0\geq 0$. 
	Since $H$ is a perturbation of the self-adjoint operator $H_0$ by the bounded operator $V$, $-iH$ is the generator of a strongly continuous one-parameter group $\left\lbrace e^{-itH} \right\rbrace_{t\in\R}$ satisfying 
	\begin{equation*}
		\norme{e^{-itH}}_{\mathcal{L}(\Hi)}\leq e^{\abso{t}\norme{V}_{\mathcal{L}(\Hi)}},\quad t\in\R,
	\end{equation*}
	(see e.g \cite{Da07_01} or \cite{EnNa20_01}).
	
We assume that there exists a \emph{metric operator} $C\in\mathcal{L}(H)$ such that $C$ is relatively compact with respect to $H_0$ and $V$ is of the form
		\begin{equation}\label{eq:CWC}
	 		V=CWC,
		\end{equation}
	with $W\in\mathcal{L}(\Hi)$. We recall that a metric operator is a strictly positive operator, i.e. $C\ge0$ and $\mathrm{Ker}(C)=\{0\}$, see e.g. \cite{AnTr12_12}.

\subsection{Spectral subspaces, spectral projections}\label{subsec:subspaces}

	Recall that $\sigma(H)$ stands for the spectrum of $H$ and $\rho(H)=\C\setminus\sigma(H)$ its resolvent set. As usual, the point spectrum of $H$ is defined as the set of all eigenvalues of $H$,
	\begin{equation*}
		\sigma_{\mathrm{p}}(H):=\big\{\lambda \in \mathbb{C} , \, \mathrm{Ker}(H-\lambda)\neq\{0\}\big\}.
	\end{equation*}
	The discrete spectrum of $H$, $\sigma_\mathrm{disc}(H)$, is the set of all isolated eigenvalues $\lambda$ with finite algebraic multiplicities $\mathrm{m}_\lambda(H)$, where
	\begin{equation*}
		\mathrm{m}_\lambda(H):=\mathrm{dim}\Big(\bigcup_{k=1}^\infty\mathrm{Ker}\big( (H-\lambda)^k \big) \Big ). 
	\end{equation*}
	 Under our assumptions, since $V$ is a relatively compact perturbation of $H_0$, the essential spectrum $\sigma_{\mathrm{ess}}(H):=\sigma(H)\backslash\sigma_\mathrm{disc}(H)$ coincides with the essential spectrum of $H_0$ and the discrete spectrum $\sigma_{\mathrm{disc}}(H)$ is at most countable and can only accumulate at points of $\sigma_{\mathrm{ess}}(H)$. See Figure \ref{fig1}. We define in addition the set of all eigenvalues embedded in the essential spectrum of $H$:
	\begin{equation*}
		\sigma_{\mathrm{emb}}(H):=\sigma_{\mathrm{p}}(H)\cap\sigma_{\mathrm{ess}}(H).
	\end{equation*}
	
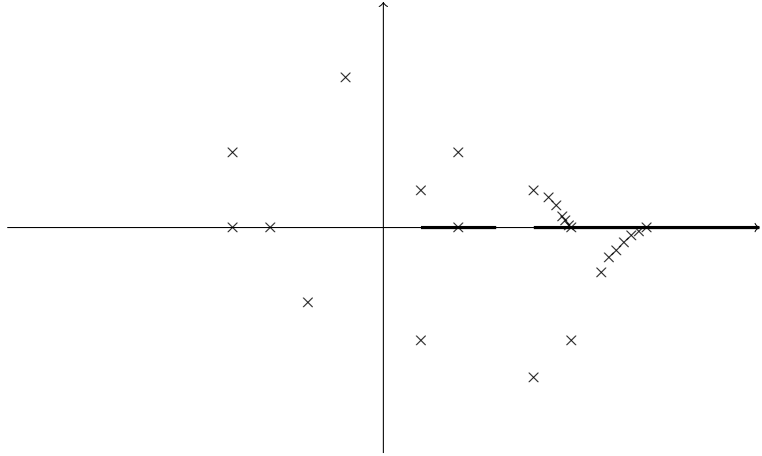
\begin{figure}
	\begin{center}
		\begin{tikzpicture}
			
			\draw[->] (-5,0) --(5,0);
			\draw[->] (0,-3) --(0,3);
			\draw[-, very thick] (0.5,0)--(1.5,0);
			\draw[-, very thick] (2,0)--(5,0);
						
			\draw (1,1) node {\tiny{$\times$}};
			
			\draw (0.5,0.5) node {\tiny{$\times$}};
			
			\draw (-2,1) node {\tiny{$\times$}};
			
			\draw (0.5,-1.5) node {\tiny{$\times$}};
			
			\draw (2,-2) node {\tiny{$\times$}};
			
			\draw (-1,-1) node {\tiny{$\times$}};
			
			\draw (-0.5,2) node {\tiny{$\times$}};
			
			\draw (2.5,-1.5) node {\tiny{$\times$}};

			\draw (1,0) node {\tiny{$\times$}};
			
			\draw(2,0.5) node {\tiny{$\times$}};
			
			\draw(2.2,0.4) node {\tiny{$\times$}};
			
			\draw(2.3,0.3) node {\tiny{$\times$}};
			
			\draw(2.38,0.15) node {\tiny{$\times$}};
			
			\draw(2.42,0.1) node {\tiny{$\times$}};
			
			\draw(2.47,0.03) node {\tiny{$\times$}};
			
			\draw (2.5,0) node {\tiny{$\times$}};
			
			\draw(-1.5,0)  node {\tiny{$\times$}};
			
			\draw(-2,0) node {\tiny{$\times$}};
			
			\draw(3.5,0) node {\tiny{$\times$}};
			
			\draw(3.4,-0.05)node {\tiny{$\times$}};
			
			\draw(3.3,-0.11)node {\tiny{$\times$}};
			
			\draw(3.2,-0.20) node {\tiny{$\times$}};
			
			\draw(3.1,-0.3) node {\tiny{$\times$}};
			
			\draw(3,-0.4) node {\tiny{$\times$}};
			
			\draw(2.9,-0.6) node {\tiny{$\times$}};

		\end{tikzpicture}
	\end{center}
	\caption{ \footnotesize  \textbf{Spectrum of $H$.} The essential spectrum of $H$, represented by thick lines, coincides with that of $H_0$ and is contained in $[ 0 , \infty )$. Eigenvalues of $H$ are represented by crosses. The discrete spectrum of $H$ consists of isolated eigenvalues of finite algebraic multiplicities which may accumulate at any point of the essential spectrum. The point spectrum of $H$ may also contain eigenvalues embedded in the essential spectrum. }\label{fig1}
\end{figure}

	\subsubsection{Eigenspaces corresponding to isolated eigenvalues}\label{subsubsec:isolated}

		For $\lambda\in\sigma_\mathrm{disc}(H)$, we denote by
		\begin{equation}\label{eq:Projection_de_Riesz_def_pour_H}
			\Pi_\lambda(H):=\frac{1}{2\pi  i}\int_\gamma \left(z\Id-H\right)^{-1}\mathrm{d}z
		\end{equation}
		the usual Riesz projection, where $\gamma$ is a circle oriented counterclockwise and centered at $\lambda$, of sufficiently small radius (so that $\lambda$ is the only point of the spectrum of $H$ contained in the interior of $\gamma$). The algebraic multiplicity of $\lambda$ satisfies $\mathrm{m}_\lambda(H)=\dim\Ran(\Pi_\lambda(H))$. Since the restriction of $H$ to $\Ran(\Pi_\lambda(H))$ may have a nontrivial Jordan form, $\Ran(\Pi_\lambda(H))$ is in general spanned by generalized eigenvectors of $H$ associated to $\lambda$, i.e., by vectors $u\in\mathcal{D}(H^k)$ such that $(H-\lambda)^{k}u=0$ for some $1\leq k\leq \mathrm{m}_\lambda(H)$. We set 
		\begin{equation*}
			\Hi_\mathrm{disc}(H):=\Span\left\lbrace u\in\Ran(\Pi_\lambda(H)),~\lambda\in\sigma_\mathrm{disc}(H)\right\rbrace^{\mathrm{cl}},
		\end{equation*}
		where $A^{\mathrm{cl}}$ stands for the closure of a subset $A\subset\Hi$. We will sometimes assume that the discrete spectrum of $H$ is finite. The spectral projection $\Pi_{\mathrm{disc}}(H)$ onto $\Hi_\mathrm{disc}(H)$ is then defined by
		\begin{equation}\label{eq:def_Pi_disc}
			\Pi_{\mathrm{disc}}(H):=\sum_{\lambda\in\sigma_{\mathrm{disc}}(H)}\Pi_\lambda(H).
		\end{equation}

		We will also consider the following three subspaces of $\Hi_\mathrm{disc}(H)$:
		\begin{align}
			\Hi_\mathrm{disc}^+(H)&:=\Span\left\lbrace u\in\Ran\left(\Pi_\lambda(H)\right),\lambda\in\sigma_\mathrm{disc}(H),\im(\lambda)<0\right\rbrace^{\mathrm{cl}},\label{eq:def_etat_discret_+}\\
			\Hi_\mathrm{disc}^0(H)&:=\Span\left\lbrace u\in\Ran\left(\Pi_\lambda(H)\right),\lambda\in\sigma_\mathrm{disc}(H),\im(\lambda)=0\right\rbrace^{\mathrm{cl}},\label{eq:def_etat_discret_0}\\
			\Hi_\mathrm{disc}^-(H)&:=\Span\left\lbrace u\in\Ran\left(\Pi_\lambda(H)\right),\lambda\in\sigma_\mathrm{disc}(H),\im(\lambda)>0\right\rbrace^{\mathrm{cl}}.\label{eq:def_etat_discret_-}
		\end{align}
		Observe that $\Hi_\mathrm{disc}^+(H)$ is the closure of the vector space spanned by all generalized eigenvectors corresponding to eigenvalues with \emph{negative} imaginary part, and likewise for $\Hi_\mathrm{disc}^-(H)$. The reason for these conventions will be understood later (see Theorem \ref{thm:caracterisation_etat_disparaissent_à_infini}).
		Clearly, if $\sigma_{\mathrm{disc}}(H)$ is finite, we have the following direct sum decomposition:
		\begin{equation*}
			\Hi_\mathrm{disc}(H)=\Hi_\mathrm{disc}^-(H)\oplus\Hi_\mathrm{disc}^0(H)\oplus\Hi_\mathrm{disc}^+(H).
		\end{equation*}
		The corresponding spectral projections $\Pi_{\mathrm{disc}}^\sharp(H)$, where $\sharp$ stands for $+$, $-$ or $0$, are defined as in \eqref{eq:def_Pi_disc}.
	
		\subsubsection{Eigenspaces corresponding to embedded eigenvalues}\label{subsubsec:embedded}
		Suppose now that $\lambda$ is an eigenvalue of $H$ embedded in its essential spectrum. The Riesz projection corresponding to $\lambda$ is then ill-defined, but, under some further conditions, one can define the spectral projection $\Pi_\lambda(H)$ as follows.
		
		In the following (see Hypothesis \ref{hyp:Conjugate_operator} below), we will suppose the existence of a \emph{conjugation operator} $J\in\mathcal{L}(\Hi)$ satisfying
			\begin{equation}\label{eq:existence_J}
			J^2=\Id, \quad J\mathcal{D}(H_0)\subset\mathcal{D}(H_0) \quad\text{and}\quad	\forall u\in\mathcal{D}(H_0), \quad JHu=H^* Ju.
			\end{equation}
		In particular, $J$ establishes a one-to-one correspondence between $\Ker((H-\lambda)^{k})$ and $\Ker((H^*-\bar{\lambda})^{k})$ for all $k\in\mathbb{N}$ and hence
		\begin{equation*}
		\mathrm{m}_\lambda(H)=\mathrm{m}_{\bar{\lambda}}(H^*).
		\end{equation*} 
		To shorten notation, let $\mathrm{m}_\lambda=\mathrm{m}_\lambda(H)=\mathrm{m}_{\bar\lambda}(H^*)$. In order to study the absolutely continuous spectral subspace of $H$ (see \eqref{eq:def_M(H)}--\eqref{eq:def_Hac}), we will suppose that for each embedded eigenvalue $\lambda\in\sigma_{\mathrm{ess}}(H)$, $\mathrm{m}_\lambda$ is finite and the symmetric bilinear form
		\begin{equation}\label{eq:invertibility_mat}
			\Ker((H-\lambda)^{\mathrm{m}_\lambda}) \ni (u,v)\mapsto \langle Ju,v\rangle_\Hi \quad\text{is non-degenerate}.
		\end{equation}
		This implies that there exists a basis $(\varphi_k)_{1\leq k\leq \mathrm{m}_\lambda}$ of $\Ker((H-\lambda)^{\mathrm{m}_\lambda})$ such that
		\begin{equation*}
		\langle J \varphi_i , \varphi_j \rangle_{\Hi} = \delta_{ij} , \quad 1\le i,j\le m_\lambda.
		\end{equation*}
		We can then define the spectral projection $\Pi_\lambda(H)$ onto the generalized eigenspace corresponding to $\lambda$ as
		\begin{equation}\label{eq:proj_emb}
			 \Pi_\lambda(H)u = \sum_{k=1}^{\mathrm{m}_\lambda}\scal{J\varphi_k}{u}_{\Hi}\varphi_k, \quad u \in \Hi.
		\end{equation}
		It is not difficult to observe that $\Pi_\lambda(H)$ is a projection commuting with $H$, such that $\Pi_\lambda(H)\in\mathcal{L}(\Hi)$ and $\Pi_\lambda(H)^*=\Pi_\lambda(H^*)$ (see Proposition \ref{prop:spectral_proj} below for a more precise statement). Note however that if \eqref{eq:invertibility_mat} does not hold, it is not clear how to define such a projection. In fact, in the simplest case where $\mathrm{m}_\lambda=1$, one easily verifies that the condition $\langle J\varphi,\varphi\rangle\neq0$ for any $\varphi\in\mathrm{Ker}(H-\lambda)\setminus\{0\}$ is necessary to have the existence of a projection onto $\mathrm{Ker}(H-\lambda)$ commuting with $H$.

		The closure of the vector space spanned by all generalized eigenstates corresponding to embedded eigenvalues of $H$ will be denoted by
		\begin{equation*}
			\Hi_{\mathrm{emb}}(H):=\Span\left\lbrace u\in\Ran(\Pi_\lambda(H)),~\lambda\in\sigma_\mathrm{emb}(H)\right\rbrace^{\mathrm{cl}}.
		\end{equation*}
		If $\sigma_{\mathrm{emb}}(H)$ is composed of finitely many eigenvalues with finite algebraic multiplicities and such that \eqref{eq:invertibility_mat} holds, we will also use the notation
		\begin{equation*}
			\Pi_{\mathrm{emb}}(H):=\sum_{\lambda\in\sigma_{\mathrm{emb}}(H)}\Pi_\lambda(H).
		\end{equation*}

		\subsubsection{Point spectral subspace}
		With the definitions of the spectral projections $\Pi_\lambda(H)$ given in Sections \ref{subsubsec:isolated} and \ref{subsubsec:embedded}, we have the following proposition, which covers both cases of isolated and embedded eigenvalues.
		\begin{proposition}\label{prop:spectral_proj}
			Suppose that there exists a conjugation operator $J$ such that \eqref{eq:existence_J} holds. Let $\lambda$ be an eigenvalue of $H$ with finite algebraic multiplicity $m_\lambda$. If $\lambda\in\sigma_{\mathrm{ess}}(H)$, suppose in addition that \eqref{eq:invertibility_mat} holds.   Then $\Pi_\lambda(H)\in\mathcal{L}(\Hi)$, $\Pi_\lambda(H)$ is a projection which preserves $\mathcal{D}(H)$ and commutes with $H$. Its range and adjoint are given respectively by
				\begin{equation*}
					\mathrm{Ran}(\Pi_\lambda(H)) = \Ker((H-\lambda)^{\mathrm{m}_\lambda}), \quad  \Pi_{\lambda}(H)^*=\Pi_{\overline{\lambda}}(H^*).
				\end{equation*}
We have
\begin{equation*}
	\left(H\Pi_\lambda(H)\right)^*=H^*\Pi_{\overline{\lambda}}(H^*),
\end{equation*}
and if $\lambda,\lambda'$ are two distinct eigenvalues of $H$, then
				\begin{equation*}
					\Pi_\lambda(H)\Pi_{\lambda'}(H)=0.
				\end{equation*}
		\end{proposition}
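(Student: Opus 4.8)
The plan is to treat separately the isolated case of Section~\ref{subsubsec:isolated} and the embedded case of Section~\ref{subsubsec:embedded}, and then to deduce the two adjoint identities and the orthogonality relation by arguments valid in both cases. I will use repeatedly that $J$ is an \emph{antilinear} involution (antilinearity being forced by the requirement in \eqref{eq:invertibility_mat} that $(u,v)\mapsto\langle Ju,v\rangle_\Hi$ be bilinear rather than sesquilinear) which, besides \eqref{eq:existence_J}, also satisfies $JH^*=HJ$ on $\mathcal{D}(H_0)$: applying \eqref{eq:existence_J} with $u$ replaced by $Ju$ gives $JHJu=H^*u$, hence $HJu=JH^*u$ after applying $J$ and using $J^2=\Id$. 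In particular $J$ maps $\Ker((H-\lambda)^{\mathrm{m}_\lambda})$ bijectively onto $\Ker((H^*-\bar\lambda)^{\mathrm{m}_\lambda})$.

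When $\lambda\in\sigma_{\mathrm{disc}}(H)$, $\Pi_\lambda(H)$ is the Riesz projection \eqref{eq:Projection_de_Riesz_def_pour_H}. That it is a bounded projection preserving $\mathcal{D}(H)$, commuting with $H$, with range $\Ker((H-\lambda)^{\mathrm{m}_\lambda})$ (using that $\lambda$ is a pole of the resolvent of order at most $\mathrm{m}_\lambda$), and satisfying $\Pi_\lambda(H)\Pi_{\lambda'}(H)=0$ for $\lambda\ne\lambda'$ (disjoint contours), are classical properties of the holomorphic functional calculus, which I would simply invoke. The only computation needed here is the adjoint identity: taking the adjoint of \eqref{eq:Projection_de_Riesz_def_pour_H}, using $\mathcal{R}_H(z)^*=\mathcal{R}_{H^*}(\bar z)$ and parametrizing $\gamma$ by $z=\lambda+re^{i\theta}$, complex conjugation reverses the orientation and turns $\gamma$ into a counterclockwise circle about $\bar\lambda$ enclosing only $\bar\lambda\in\sigma(H^*)=\overline{\sigma(H)}$, whence $\Pi_\lambda(H)^*=\Pi_{\bar\lambda}(H^*)$ and, in particular, $\mathrm{m}_\lambda(H)=\mathrm{m}_{\bar\lambda}(H^*)$.

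When $\lambda\in\sigma_{\mathrm{ess}}(H)$, I would argue from the explicit formula \eqref{eq:proj_emb}. Write $E:=\Ker((H-\lambda)^{\mathrm{m}_\lambda})$, $F:=\Ker((H^*-\bar\lambda)^{\mathrm{m}_\lambda})=JE$, and $[u,v]:=\langle Ju,v\rangle_\Hi$. Boundedness is clear since $\Pi_\lambda(H)$ is a finite sum of rank-one operators, and substituting $\langle J\varphi_i,\varphi_j\rangle_\Hi=\delta_{ij}$ into \eqref{eq:proj_emb} gives at once $\Pi_\lambda(H)^2=\Pi_\lambda(H)$ and $\Pi_\lambda(H)\varphi_j=\varphi_j$, so that $\Ran(\Pi_\lambda(H))=E\subset\mathcal{D}(H)$ and $\Ker(\Pi_\lambda(H))=E^{[\perp]}$, the $[\cdot,\cdot]$-orthogonal complement of $E$; non-degeneracy of $[\cdot,\cdot]|_E$ from \eqref{eq:invertibility_mat} yields the topological direct sum $\Hi=E\oplus E^{[\perp]}$. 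For the commutation, I would observe that $H$ is $[\cdot,\cdot]$-symmetric on $\mathcal{D}(H)$, namely $[Hu,v]=\langle JHu,v\rangle_\Hi=\langle H^*Ju,v\rangle_\Hi=\langle Ju,Hv\rangle_\Hi=[u,Hv]$, whence both $E$ and $E^{[\perp]}\cap\mathcal{D}(H)$ are $H$-invariant (for the latter, $[e,Hu]=[He,u]=0$ when $e\in E$ and $u\in E^{[\perp]}\cap\mathcal{D}(H)$, as $He\in E$). Since any $u\in\mathcal{D}(H)$ splits as $u=\Pi_\lambda(H)u+(\Id-\Pi_\lambda(H))u$ with both summands in $\mathcal{D}(H)$, applying $H$ and then $\Pi_\lambda(H)$ gives $\Pi_\lambda(H)Hu=H\Pi_\lambda(H)u$.

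It remains to establish the two adjoint identities and the orthogonality relation case-independently. From $JE=F$, $JF=E$ and conjugate symmetry of $\langle\cdot,\cdot\rangle_\Hi$ one checks $E^{[\perp]}=F^\perp$ and $E^\perp=F^{[\perp]}$; since the Hilbert-space adjoint of the projection onto $M$ along $N$ is the projection onto $N^\perp$ along $M^\perp$, in the embedded case $\Pi_\lambda(H)^*$ is the projection onto $(E^{[\perp]})^\perp=F$ along $E^\perp=F^{[\perp]}$, i.e.\ $\Pi_{\bar\lambda}(H^*)$. With $\Pi_\lambda(H)^*=\Pi_{\bar\lambda}(H^*)$ in hand (both cases), so that $\Ker\Pi_\lambda(H)=\Ran(\Pi_{\bar\lambda}(H^*))^\perp=F^\perp$, I would obtain $(H\Pi_\lambda(H))^*=H^*\Pi_{\bar\lambda}(H^*)$ from the chain $\langle H\Pi_\lambda(H)u,v\rangle_\Hi=\langle H\Pi_\lambda(H)u,\Pi_{\bar\lambda}(H^*)v\rangle_\Hi=\langle\Pi_\lambda(H)u,H^*\Pi_{\bar\lambda}(H^*)v\rangle_\Hi=\langle u,H^*\Pi_{\bar\lambda}(H^*)v\rangle_\Hi$, where the first equality drops $(\Id-\Pi_{\bar\lambda}(H^*))v\in F^{[\perp]}=E^\perp$, orthogonal to $H\Pi_\lambda(H)u\in E$, the second integrates by parts (both vectors lying in the relevant domains), and the third uses $\Pi_\lambda(H)^*=\Pi_{\bar\lambda}(H^*)$ and $\Pi_{\bar\lambda}(H^*)H^*\Pi_{\bar\lambda}(H^*)=H^*\Pi_{\bar\lambda}(H^*)$. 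Finally, for $\lambda\ne\lambda'$ I would show $\Ran(\Pi_{\lambda'}(H))=\Ker((H-\lambda')^{\mathrm{m}_{\lambda'}})=:E'\subset\Ker\Pi_\lambda(H)=F^\perp$: for $f\in F$ and $e'\in E'$ one has $\langle f,(H-\lambda)^{\mathrm{m}_\lambda}e'\rangle_\Hi=\langle(H^*-\bar\lambda)^{\mathrm{m}_\lambda}f,e'\rangle_\Hi=0$, and since $(H-\lambda)|_{E'}$ is invertible (as $(H-\lambda')|_{E'}$ is nilpotent and $\lambda\ne\lambda'$) the vectors $(H-\lambda)^{\mathrm{m}_\lambda}e'$ exhaust $E'$, forcing $f\perp E'$. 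I expect the embedded-case commutation, with its reliance on the $[\cdot,\cdot]$-symmetry of $H$ and the careful bookkeeping of the domain of the unbounded operator $H$ under the splitting $\Hi=E\oplus E^{[\perp]}$, to be the main obstacle.
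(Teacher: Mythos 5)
Your proof is correct, but it takes a genuinely different route from the paper's in the embedded case. The paper handles isolated eigenvalues exactly as you do (classical Riesz projection theory), but for embedded eigenvalues it argues by restriction: since $\Hi_{\mathrm{p}}(H)$ is finite dimensional, the restriction of $H$ to $\Hi_{\mathrm{p}}(H)$ has discrete spectrum, and the Riesz projections of this restricted operator are identified with the restrictions of \eqref{eq:Projection_de_Riesz_def_pour_H} and \eqref{eq:proj_emb}; all properties are then inherited from finite-dimensional spectral theory. You instead verify everything directly from the formula \eqref{eq:proj_emb}, using the $J$-bilinear form $[u,v]=\scal{Ju}{v}$, the $[\cdot,\cdot]$-symmetry of $H$ (which follows from \eqref{eq:existence_J}), and the invariant splitting $\Hi=E\oplus E^{[\perp]}$ with $E^{[\perp]}=F^\perp$, $F=JE$. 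Your approach buys two things the paper's sketch leaves implicit: first, the identities $\Pi_\lambda(H)^*=\Pi_{\bar\lambda}(H^*)$ and $(H\Pi_\lambda(H))^*=H^*\Pi_{\bar\lambda}(H^*)$ are statements about Hilbert-space adjoints on all of $\Hi$, not merely about operators on $\Hi_{\mathrm{p}}(H)$, and your projection-onto-$M$-along-$N$ argument proves them at that level; second, the orthogonality $\Pi_\lambda(H)\Pi_{\lambda'}(H)=0$ when one eigenvalue is isolated and the other embedded cannot come from disjoint contours alone, and your case-independent argument via $E'=(H-\lambda)^{\mathrm{m}_\lambda}E'\subset F^\perp=\Ker\Pi_\lambda(H)$ handles all combinations uniformly. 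What the paper's route buys is brevity. One small point you should make explicit: for $\Pi_{\bar\lambda}(H^*)$ to be defined by \eqref{eq:proj_emb} (applied to $H^*$ with the same conjugation $J$, legitimate since $HJ=JH^*$), the form $(f,g)\mapsto\scal{Jf}{g}$ must be non-degenerate on $F=\Ker((H^*-\bar\lambda)^{\mathrm{m}_\lambda})$; this is immediate in your setup by taking $\psi_k:=J\varphi_k$ as the basis of $F$, since $\scal{J\psi_i}{\psi_j}=\scal{\varphi_i}{J\varphi_j}=\overline{\scal{J\varphi_j}{\varphi_i}}=\delta_{ij}$, which exhibits $\Pi_{\bar\lambda}(H^*)$ as the projection onto $F$ along $F^{[\perp]}$, exactly as your argument requires.
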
 
		In the case of isolated eigenvalues, Proposition \ref{prop:spectral_proj} follows from the definition \eqref{eq:Projection_de_Riesz_def_pour_H} of the Riesz projection $\Pi_\lambda(H)$ (see e.g. \cite[Theorem XII.5]{ReSi80_01}). In the general case, it suffices to observe that the restriction of $H$ to $\Hi_{\mathrm{p}}(H)$ has a discrete spectrum (since $\Hi_{\mathrm{p}}(H)$ is finite dimensional) and that the Riesz projections associated to its eigenvalues are given by the restrictions of \eqref{eq:Projection_de_Riesz_def_pour_H} or \eqref{eq:proj_emb} to $\Hi_{\mathrm{p}}(H)$. 
		
		We will assume below that $H$ only has a finite number of eigenvalues with finite algebraic multiplicities.   Under this simplifying assumption, we set
		\begin{equation*}
			\Pi_\mathrm{p}(H):=\sum_{\lambda\in\sigma_{\mathrm{p}}(H)} \Pi_\lambda(H),
		\end{equation*}
		and observe that
		\begin{equation*}
			\Pi_\mathrm{p}(H)^*=\sum_{\lambda\in\sigma_{\mathrm{p}}(H)} \Pi_{\overline{\lambda}}(H^*)=:\Pi_\mathrm{p}(H^*). 
		\end{equation*}
		The point spectral subspace of $H$ is then defined by
		\begin{equation*}
			\Hi_{\mathrm{p}}(H):=\mathrm{Ran}(\Pi_{\mathrm{p}}(H)),
		\end{equation*}
		and likewise for $H^*$. It satisfies
		\begin{equation*}
			\Hi_{\mathrm{p}}(H)=\Hi_{\mathrm{disc}}(H)\oplus\Hi_{\mathrm{emb}}(H).
		\end{equation*}
		We also observe that under our assumptions, the point spectral subspace corresponding to eigenvalues with positive/negative imaginary parts identifies to the corresponding discrete spectral subspace:
				\begin{align}
			\Hi_\mathrm{p}^+(H)&:=\Span\left\lbrace u\in\Ran\left(\Pi_\lambda(H)\right),\lambda\in\sigma_\mathrm{p}(H),\im(\lambda)<0\right\rbrace^{\mathrm{cl}}=\Hi_\mathrm{disc}^+(H),\\
			\Hi_\mathrm{p}^-(H)&:=\Span\left\lbrace u\in\Ran\left(\Pi_\lambda(H)\right),\lambda\in\sigma_\mathrm{p}(H),\im(\lambda)>0\right\rbrace^{\mathrm{cl}}=\Hi_\mathrm{disc}^-(H).
		\end{align}
		
  		\subsubsection{Subspaces of asymptotically disappearing states}
  	
  			We define the subspaces of \emph{asymptotically disappearing states} as
  				\begin{equation*}
  					\Hi_\mathrm{ads}^\pm(H):=\left\lbrace u\in\Hi,~\lim_{t\rightarrow \pm\infty}\norme{e^{-itH}u}_\Hi=0\right\rbrace^{\mathrm{cl}}.
  				\end{equation*}
  			Note that $\Hi^+_\mathrm{ads}(H)$ and $\Hi^-_\mathrm{ads}(H)$ are closed. Using that for a generalized eigenvector $\varphi \in \Hi^\pm_\mathrm{p}(H)$, the norm $\|e^{-itH}\varphi\|_\Hi$ decays exponentially as $t\to\pm\infty$, it is not difficult to verify that
			\begin{equation*}
					\Hi_\mathrm{p}^\pm(H) \subset \Hi_\mathrm{ads}^\pm(H),
			\end{equation*}
			(see Proposition \ref{prop:easy_incl}). In Theorem \ref{thm:caracterisation_etat_disparaissent_à_infini} below, we will give conditions under which this inclusion becomes an equality.
  		
   		\subsubsection{Absolutely continuous spectral subspace}\label{subsub:defi_absolutely_continuous_spectral_subspace}
	
			Let 
			\begin{equation}\label{eq:def_M(H)}
				\mathcal{M}(H):=\left\lbrace u\in\Hi,~\exists c_u>0,\forall v\in\Hi,~\int_\R\abso{\scal{e^{-itH}u}{v}_\Hi}^2\mathrm{d}t\leq c_u\norme{v}^2_\Hi \right\rbrace.
			\end{equation}
			We define the \emph{absolutely continuous spectral subspace} of $H$, $\Hi_\mathrm{ac}(H)$, as the closure of $\mathcal{M}(H)$ in $\Hi$,
			\begin{equation}\label{eq:def_Hac}
				\Hi_\mathrm{ac}(H) := \mathcal{M}(H)^{\mathrm{cl}}.
			\end{equation}
			Note that if $H$ is self-adjoint, then $\mathcal{M}(H)$ is closed and coincides with the usual absolutely continuous spectral subspace of $H$.
			
			Such a definition of an absolutely continuous spectral subspace for non-self-adjoint operators goes back to \cite{Da78_01}, where dissipative operators, $\mathrm{Im}(V)\le0$, are considered, and the integral in \eqref{eq:def_M(H)} is taken over $[0,\infty)$ instead of $\mathbb{R}$. For self-adjoint operators, our definition and that of \cite{Da78_01} coincide; this is however not the case for non-self-adjoint operators. See Section \ref{subsec:ideas} for a discussion comparing our results and those of \cite{Da78_01,Da80_01} for the absolutely continuous spectral subspace of dissipative operators. Note also that other definitions of an absolutely continuous spectral subspace for non-dissipative perturbations of a self-adjoint operator have been considered in the literature, using the theory of dilations. See \cite{KiNa09_01,Na76_01} and references therein.
			
			Our definition of $\Hi_\mathrm{ac}(H)$ is motivated by the following fact: it is not difficult to see that if $v$ is a generalized eigenvector of $H^*$, then $t\mapsto \scal{e^{-itH}u}{v}_\Hi$ cannot belong to $L^2(\R)$ unless $\scal{u}{v}=0$ (see Section \ref{subsec:abs_cont}). In other words
			\begin{equation*}
				\Hi_{\mathrm{ac}}(H)\subset\Hi_{\mathrm{p}}(H^*)^\perp.
			\end{equation*}
			Theorem \ref{thm:caracterisation_espace_absolument_continu} will show that, under suitable assumptions,
			$
				\Hi_\mathrm{ac}(H) = \Hi_{\mathrm{p}}(H^*)^\perp.
			$
			This generalizes the equality 			$
				\Hi_\mathrm{ac}(H) = \Hi_{\mathrm{p}}(H)^\perp
			$
			which holds for self-adjoint operators without singular continuous spectrum.

	\subsection{Extension of the Hilbert space}\label{subsec:extension}

		In this section, we construct an extension of the Hilbert space $\Hi$ containing the `outgoing and incoming resonant states' that will be introduced below (see Definition \ref{def:resonant}). To this end we consider a Gelfand triple defined in terms of the metric operator $C$ appearing in the definition \eqref{eq:defH}--\eqref{eq:CWC} of $H$.
		
		\subsubsection{Gelfand triple}
		Recall that $C$ is supposed to be relatively compact with respect to $H_0$. Let 
		\begin{equation*}
			\Hi_C:=\Ran(C).
		\end{equation*}
		Since $C$ is self-adjoint and injective, its inverse $C^{-1}$ is a self-adjoint unbounded operator with dense domaine $\mathcal{D}(C^{-1}):=\Hi_C$. We equip $\Hi_C$ with the scalar product 
		\begin{equation*}
			\scal{u}{v}_{\Hi_C}:=\scal{C^{-1}u}{C^{-1}v}_\Hi,\quad  u,v\in\Hi_C.
		\end{equation*}
		It is not difficult to verify that the identity operator from $\Hi_C$ to $\Hi$ is a continuous embedding, $\Hi_C\hookrightarrow\Hi$. 
		
		Let $\Hi_C'$ be the anti-dual of $\Hi_C$ (the set of anti-linear continuous maps from $(\Hi_C,\|\cdot\|_\Hi)$ to $\C$). Setting
		\begin{equation*}
			\scal{u}{v}_{C}:=\scal{Cu}{Cv}_\Hi, \quad  u,v\in\Hi,
		\end{equation*}
		one verifies that $\Hi_C'$ identifies with the completion of $\Hi$ for the norm $\norme{.}_C$ associated to $\langle\cdot,\cdot\rangle_C$. Thus we obtain the Gelfand triple
		\begin{equation*}
			\Hi_C\hookrightarrow\Hi\hookrightarrow\Hi_C'.
		\end{equation*}
	
		Now, given $A\in\mathcal{L}(\Hi_C)$ a bounded operator in $\Hi_C$, the anti-dual of $A$, denoted by $A'\in\mathcal{L}(\Hi'_C)$, is defined by 
		\begin{equation}\label{eq:definition_dual_operateur}
				 \langle u , A'\Psi \rangle_{\Hi_C,\Hi'_C}:=\langle A^* u,\Psi\rangle_{\Hi_C,\Hi'_C}, \quad \Psi\in\Hi_C', \quad u\in\Hi_C.
		\end{equation}
		Since the restriction of $C$ to $\Hi_C$ belongs to $\mathcal{L}(\Hi_C)$, we can consider its anti-dual defined by \eqref{eq:definition_dual_operateur}; we still use the symbol $C'$ to denote the anti-dual of $C|_{\Hi_C}$. It is not difficult to show that, for all $\Psi\in\Hi_C'$, $C'\Psi$ extends to an anti-linear continuous form on $\Hi$ which identifies to an element of $\Hi$ \emph{via} the (anti-linear version of) the Riesz representation theorem. The map $C':\Hi_C'\to\Hi$ then extends to a bounded operator $C'\in\mathcal{L}(\Hi_C',\Hi)$ satisfying
		\begin{equation*}
			\norme{C'}_{\mathcal{L}(\Hi_C',\Hi)}\leq 1. 
		\end{equation*}
		Moreover, $C'$ is an extension of $C$ and for all $\Psi\in\Hi_C'$, there exists a sequence $(v_n)_{n\in\N}\subset \Hi$ such that
			\begin{equation*}
				\big \|C'\Psi - Cv_n \big \|_\Hi \to 0 , \quad n \to \infty.
			\end{equation*}
		Note that the anti-dual of $C^{-1}\in\mathcal{L}(\Hi_C,\Hi)$, denoted by $(C^{-1})'\in\mathcal{L}(\Hi,\Hi_C')$, satisfies $C'^{-1}=(C^{-1})'$. Note also that $\Hi'_C$ is equipped with the scalar product
		\begin{equation*}
			  \scal{\Psi}{\Phi}_{\Hi_{C'}}:=\scal{C'\Psi}{C'\Phi}_\Hi , \quad \Psi,\Phi\in\Hi_C', 
		\end{equation*}
		which is an extension of $\scal{\cdot}{\cdot}_C$ to $\Hi_C'$. 
		
		\subsubsection{Extension of $H$}
		Our next concern is to define the anti-dual of the operator 
		\begin{equation*}
		H=H_0+V=H_0+CWC.
		\end{equation*}
		First, we observe that since $V=CW C$ belongs to $\mathcal{L}(\Hi'_C,\Hi_C)$, its anti-dual $V'\in\mathcal{L}(\Hi_C',\Hi_C)$ is well-defined and given by
		\begin{equation*}
			V'=CW C'.
		\end{equation*}
		Now, in order for the anti-dual of the unbounded operator $H_0$ to be well-defined, we will assume (see Hypothesis \ref{hyp:domaine_H_0_C} below) that 
		\begin{equation*}
				\mathcal{D}({H_0}|_{\Hi_C}):=\left\lbrace u\in\mathcal{D}(H_0)\cap\Hi_C, H_0u\in\Hi_C\right\rbrace
			\end{equation*}
		is dense in $\Hi_C$ for the topology of $\Hi_C$.  The anti-dual $H_0'$ of $H_0$ is then defined by
		\begin{equation*}
			\mathcal{D}(H'_0):=\left\lbrace \Psi\in\Hi_C',\exists \alpha>0,\forall u\in\mathcal{D}({H_0}|_{\Hi_C}),|\langle H_0u,\Psi\rangle_{\Hi_C,\Hi'_C}|\leq\alpha\norme{u}_{\Hi_C}\right\rbrace,
		\end{equation*}
		and 
		\begin{equation*}
			\langle u,H_0'\Psi\rangle_{\Hi_C,\Hi'_C}:=\langle H_0u,\Psi\rangle_{\Hi_C,\Hi'_C}, \quad \Psi\in\mathcal{D}(H_0'), \quad u\in\mathcal{D}({H_0}|_{\Hi_C}).
		\end{equation*}
		In the same way, we can define 
		\begin{equation*}
			\mathcal{D}(H|_{\Hi_C}):=\left\lbrace u\in\mathcal{D}(H_0)\cap\Hi_C,Hu\in\Hi_C\right\rbrace,
		\end{equation*}
		\begin{equation*}
			\langle u,H'\Psi \rangle_{\Hi_C,\Hi'_C}:=\langle H^*u,\Psi\rangle_{\Hi_C,\Hi'_C}, \quad \Psi\in\mathcal{D}(H'), \quad u\in\mathcal{D}({H}|_{\Hi_C}).
		\end{equation*}
		Then it is not difficult to see that $\mathcal{D}({H_0}|_{\Hi_C})=\mathcal{D}(H|_{\Hi_C})$,  $\mathcal{D}(H_0')=\mathcal{D}(H')$, and
		\begin{equation*}
			H'=H_0'+V'=H_0'+CW C'. 
		\end{equation*}
		Clearly, $H'$ is an extension of $H$ since for all $u\in\mathcal{D}(H_0)$, we have 
		\begin{equation*}
			 \forall v\in\mathcal{D}({H_0}|_{\Hi_C}), \quad \abso{\scal{u}{H_0v}_\Hi}\leq\norme{H_0u}_{\Hi}\norme{C}_\Hi\norme{v}_{\Hi_C},
		\end{equation*}
		which implies that $u\in\mathcal{D}(H_0')$ and that $H'u=Hu$, using the anti-linear version of the Riesz representation theorem.
	
		\subsubsection{The resolvent of $H_0$} One of our main hypotheses (see Hypothesis \ref{hyp:principe_absorption_limite}) will imply that the limits
				\begin{equation}\label{eq:limit_abs_H0}
			C\Res_0(\lambda\pm i0^+)C:=\lim_{\varepsilon\rightarrow 0^+}C\Res_0(\lambda\pm i\varepsilon)C
		\end{equation}
		exist for a.e. $\lambda\in\sigma_{\mathrm{ess}}(H_0)$, for the topology of $\mathcal{L}(\Hi)$. In other words, the family of operators $(\mathcal{R}_0(\lambda\pm i\varepsilon))_{\varepsilon>0}$ converges in $\mathcal{L}(\Hi_C,\Hi'_C)$ as $\varepsilon\to0^+$ and its limit is denoted by 
		\begin{equation*}
			\mathcal{R}_0(\lambda\pm i0^+)\in\mathcal{L}(\Hi_C,\Hi'_C).
		\end{equation*}
		
\subsection{Regular spectral points and spectral singularities}

In this section we define the notions of regular spectral points and spectral singularities that we consider in this paper. 

	\begin{definition}[Regular spectral point and spectral singularity]\label{def:point_spectral_regulier_classique_pour_H}
		Let $\lambda\in\sigma_\mathrm{ess}(H)$. 
	\begin{enumerate}[label=(\roman*)]
		\item We say that $\lambda$ is an outgoing/incoming regular spectral point of $H$ if $\lambda$ is not an accumulation point of eigenvalues located in $\lambda\pm i\left( 0,\infty\right)$ and if the limit 
		\begin{equation}\label{eq:def_reg_spec_pt}
			C\Res_H(\lambda\pm i0^+)CW:=\lim_{\varepsilon\rightarrow 0^+} C\Res_H(\lambda\pm i\varepsilon)CW
		\end{equation}
		exists in the norm topology of $\mathcal{L}(\Hi)$. If $\lambda$ is not an outgoing/incoming regular spectral point, we say that $\lambda$ is an outgoing/incoming spectral singularity of $H$.
		\item We say that $\lambda$ is a regular spectral point of $H$ if it is both an incoming and an outgoing regular spectral point of $H$. If $\lambda$ is not a regular spectral point, we say that $\lambda$ is a spectral singularity of $H$.
	\end{enumerate}
	\end{definition}	

Our definition of a spectral singularity is related to that of \cite{Sc60_01} and to the notion of spectral projections for non-self-adjoint operators \cite{Du58_01}, which will also be an important tool in our paper. See \eqref{eq:spectr_proj_I} for the definition of the spectral projection $\mathds{1}_I(H)$ corresponding to a spectral interval $I\subset\sigma_{\mathrm{ess}}(H)$ without spectral singularities. In \cite{Sc60_01}, a spectral singularity corresponds to an exceptional point $\lambda_0$  outside of which the `spectral resolution' $I \mapsto \mathds{1}_I(H)$ is countably additive and uniformly bounded. In our context, this is a weaker requirement than that of Definition \ref{def:point_spectral_regulier_classique_pour_H}, see Section \ref{subsec:spectr_proj}.

It should also be noted that Definition \ref{def:point_spectral_regulier_classique_pour_H} generalizes the definition of spectral singularities considered in the context of dissipative operators in \cite{FaFr18_01,FaNi18_01,Fa21_01}. As we explain below, for Schrödinger operators, $H=-\Delta+V(x)$ on $L^2(\mathbb{R}^d)$ with $V$ a complex, decaying potential, spectral singularities correspond to real resonances.
		
	Various characterizations of the notions introduced in Definition \ref{def:point_spectral_regulier_classique_pour_H} will be given in Section \ref{sec:spectr_sing}. Note in particular that, under the assumption that the limits \eqref{eq:limit_abs_H0} exist, the operator $W$ could have been put to the left of $C\Res_H(\lambda\pm i0^+)C$ in \eqref{eq:def_reg_spec_pt}, i.e. the limits in \eqref{eq:def_reg_spec_pt} exist if and only if the limits $WC\Res_H(\lambda\pm i\varepsilon)C$, $\varepsilon\to0^+$ exist. Moreover, under the same assumption, we will show that $\lambda$ is an outgoing/incoming regular spectral point of $H$ if and only if $\mathrm{Id}+\Res_0(\lambda\pm i0^+)V'$ is invertible in $\mathcal{L}(\Hi_C')$. (Here it should be recalled that $V'\in\mathcal{L}(\Hi_C',\Hi_C)$ and, assuming that the limits \eqref{eq:limit_abs_H0} exist, that $\mathcal{R}_0(\lambda\pm i0^+)\in\mathcal{L}(\Hi_C,\Hi'_C)$). Hence spectral singularities are naturally associated to  resonant states defined as follows.
		\begin{definition}[Incoming/outgoing resonant states]\label{def:resonant}
			Let $\lambda\in\sigma_\mathrm{ess}(H)$ be a spectral singularity of $H$. The space $\Hi_C'^+(\lambda)\subset\Hi'_C$ of outgoing resonant states corresponding to $\lambda$ is defined by
			\begin{equation*}
				\Hi_C'^+(\lambda):=\Ker\left(\Id+\Res_0(\lambda+i0^+)V'\right).
			\end{equation*}
			The space $\Hi_C'^-(\lambda)\subset\Hi'_C$ of incoming resonant states is defined by
			\begin{equation*}
				\Hi_C'^-(\lambda):=\Ker\left(\Id+\Res_0(\lambda-i0^+)V'\right).
			\end{equation*}
		\end{definition} 
		Clearly, as kernels of bounded operators, the vector spaces $\Hi_C'^{\pm}(\lambda)$ are closed. We will prove that eigenvectors associated to embedded eigenvalues of $H$ belong to $\Hi_C'^\pm(\lambda)$.
		
		Note that in the case of complex Schrödinger operators, $H=-\Delta+V(x)$ on $L^2(\mathbb{R}^d)$, $\lambda$ is usually called an outgoing/incoming resonance if the quotient vector space 
		\begin{equation*}
		\Ker\left((\Id+\Res_0(\lambda\pm i0^+)V')|_{\Hi_C'}\right)\slash \Ker\left((\Id+\Res_0(\lambda\pm i0^+)V')|_{\Hi}\right)\neq\lbrace 0\rbrace,
		\end{equation*}
where $(\Id+\Res_0(\lambda\pm i0^+)V')|_{\Hi_C'}$ stands for the restriction of $(\Id+\Res_0(\lambda\pm i0^+)V')$ to $\Hi_C'$, and likewise for  $(\Id+\Res_0(\lambda\pm i0^+)V')|_{\Hi}$. An outgoing/incoming resonant state then corresponds to an element of $\Ker((\Id+\Res_0(\lambda\pm i0^+)V')|_{\Hi_C'})$ which does not belong to $\Hi$. See Section \ref{subsec:appli} for more details. 		
		

\section{Assumptions and main results}\label{sec:results}

	\subsection{Hypotheses}
		In this section we detail our main abstract hypothesis. In Section \ref{subsec:appli} we will show that they are satisfied in the case of complex Schrödinger operators, under suitable assumptions on the potential.
		
		 In our first hypothesis, we require that $H_0$ satisfies a \textit{limiting absorption principe} (with weight $C$) at any point of the essential spectrum.

	\begin{assumption}[Limiting absorption principle for $H_0$]\label{hyp:principe_absorption_limite}
		We have
		\begin{equation}\label{eq:LAP_H0}
			\sup_{z\in\C^\pm}\big\|C\Res_0(z)C\big\|_{\mathcal{L}(\Hi)}<\infty.
		\end{equation}
	\end{assumption}

	Note that \eqref{eq:LAP_H0} implies (see e.g. \cite[Proposition 4.1]{CFKS}) that the spectrum of $H_0$ is purely absolutely continuous, i.e. that $\sigma_{\mathrm{pp}}(H_0)=\emptyset$, $\sigma_{\mathrm{ac}}(H_0)=\sigma(H_0)$, $\sigma_{\mathrm{sc}}(H_0)=\emptyset$, where $\sigma_{\mathrm{pp}}(H_0)$, $\sigma_{\mathrm{ac}}(H_0)$, $\sigma_{\mathrm{sc}}(H_0)$ stand for the usual pure point, absolutely continuous and singular continuous spectra of the self-adjoint operator $H_0$. 
	
	By Fatou's Theorem, \eqref{eq:LAP_H0} yields that the limits $C\Res_0(\lambda\pm i0^+)C$ exist for almost every $\lambda\in\sigma_{\mathrm{ess}}(H)$, in the norm topology of $\mathcal{L}(\Hi)$, and that the map $\R\ni\lambda\mapsto C\Res_0(\lambda\pm i0^+)C\in\mathcal{L}(\Hi)$ is bounded (observe that $C\Res_0(\lambda\pm i0^+)C=C\Res_0(\lambda)C$ if $\lambda\in\R\setminus\sigma_{\mathrm{ess}}(H)$).
	
	Note also that Hypothesis \ref{hyp:principe_absorption_limite} implies (see \cite{Ka65_01} or \cite[Theorem XIII.25 and its corollary]{ReSi80_01}) that $C$ is \emph{relatively smooth} with respect to $H_0$ in the sense of Kato, i.e. that there exists a constant $c_0$ such that 
		\begin{equation}\label{eq:Kato_smooth}
			\forall u\in\Hi, \quad \int_\R\norme{Ce^{-itH_0}u}_\Hi^2\mathrm{d}t\leq c_0^2\norme{u}_\Hi^2.
		\end{equation}
		Recall that \eqref{eq:Kato_smooth} is equivalent to 
		\begin{equation}\label{eq:Smooth_Ka65_01}
			\forall u\in\Hi, \quad \int_\R\left(\norme{C\Res_0(\lambda-i0^+)u}_\Hi^2+\norme{C\Res_0(\lambda+i0^+)u}_\Hi^2\right)\mathrm{d}\lambda\leq 2\pi c_0^2\norme{u}_\Hi^2,
		\end{equation}
		where $\lambda\mapsto C\Res_0(\lambda\pm i0^+)u$ denotes the limit of $\lambda\mapsto C\Res_0(\lambda\pm i\varepsilon)u$ in $L^2(\R;\Hi)$ as $\varepsilon\to0^+$.

		Next we will assume that the point spectral subspace of $H$ is finite.
		
	\begin{assumption}[Eigenvalues of $H$]\label{hyp:valeurs_propres_H}
		$H$ has only a finite number of eigenvalues with finite algebraic multiplicity. 
	\end{assumption}

Hypothesis \ref{hyp:valeurs_propres_H} prevents the essential spectrum of $H$ from having an accumulation point of eigenvalues. It does not exclude, however, the presence of eigenvalues embedded in the essential spectrum of $H$.

	Our next hypothesis concerns the spectral singularities of $H$. We will assume that $H$ has finitely many spectral singularities with a `finite order', in the sense that the map $z\mapsto C\Res_H(z)CW$ blows up at most polynomially as $z$ approaches any spectral singularity $\lambda\in\sigma_{\mathrm{ess}}(H)$. We will also allow for singularities `at infinity', in the sense that $z\mapsto C\Res_H(z)C$ may blow up polynomially as $z$ tends to $\infty$ ($z$ close to the real axis).
	
	\begin{assumption}[Spectral singularities for $H$]\label{hyp:singularités_spectrales}
		$H$ only has a finite number of spectral singularities $\left\lbrace \lambda_1,\ldots\lambda_n\right\rbrace\subset \sigma_{\mathrm{ess}}(H)$ and there exist $\varepsilon_0>0$ and integers $\nu_1,\dots,\nu_n,\nu_\infty\ge0$ such that 
		\begin{equation}\label{eq:limit_uniform}
			\sup_{\mathrm{Re}(z)\in \sigma_{\mathrm{ess}}(H), \pm\mathrm{Im}(z)\in(0,\varepsilon_0)} \frac{1}{|z-z_0|^{\nu_\infty}} \Big( \prod_{j=1}^n\frac{ |z-\lambda_j|^{\nu_j} }{ |z-z_0|^{\nu_j} } \Big ) \big\|C\Res_H(z)CW\big\|_{\mathcal{L}(\Hi)}<\infty,
		\end{equation}
	where $z_0$ is an arbitrary complex number such that $z_0\in\rho(H)$, $z_0\in\C\setminus\R$.
	\end{assumption}
	The factors $|z-\lambda_j|^{\nu_j}$ `regularize' the singularities of $z\mapsto C\Res_H(z)CW$ as $z$ approaches $\lambda_j$. Divided them by $|z-z_0|^{\nu_j}$ produces bounded terms. The factor $|z-z_0|^{-\nu_\infty}$ regularizes a possible singularity at $\infty$.
	
	Observe that since $\lambda_1,\dots,\lambda_n$ are the only spectral singularities of $H$, for all $\lambda\in \sigma_{\mathrm{ess}}(H)\setminus\{\lambda_1,\dots,\lambda_n\}$, the limits $C\Res_H(\lambda\pm i0^+)CW$ exist in the norm topology of $\mathcal{L}(\Hi)$. The condition \eqref{eq:limit_uniform} then implies that the maps
	\begin{equation}
		\sigma_{\mathrm{ess}}(H)\setminus\{\lambda_1,\dots,\lambda_n\} \ni \lambda \mapsto \frac{1}{|\lambda-z_0|^{\nu_\infty}} \Big( \prod_{j=1}^n\frac{ |\lambda-\lambda_j|^{\nu_j} }{ |\lambda-z_0|^{\nu_j} } \Big ) C\Res_H(\lambda\pm i0^+)C \in \mathcal{L}(\Hi)
	\end{equation}
	are bounded. Since, as we will show below (see Proposition \ref{cor:embed}), embedded eigenvalues are special spectral singularities, Hypothesis \ref{hyp:singularités_spectrales} is also a condition on embedded eigenvalues.

	As mentioned above, to study the absolutely continuous spectral subspace of $H$, we require the existence of a conjugation operator $J$ satisfying, in particular, $JH=H^* J$. 
	\begin{assumption}[Conjugation operator]\label{hyp:Conjugate_operator}
		There exists an anti-linear continuous map $J:\Hi\to\Hi$ such that 
		\begin{enumerate}[label=(\roman*)]
			\item $J^2=\mathrm{Id}$,
			\item $\forall u,v\in\Hi$, $\scal{Ju}{Jv}_\Hi=\scal{v}{u}_\Hi$,
			\item $J\mathcal{D}(H_0)\subset\mathcal{D}(H_0)$ and $\forall u\in\mathcal{D}(H_0)$, $JH_0u=H_0Ju$.
			\item $JC=CJ$ and $JW=W^* J$. 
		\end{enumerate}
		Moreover, for all embedded eigenvalues $\lambda\in\sigma_{\mathrm{ess}}(H)$, the symmetric bilinear form
		\begin{equation}\label{eq:invertibility_mat2}
			\Ker\big((H-\lambda)^{\mathrm{m}_\lambda}\big) \ni (u,v)\mapsto \langle Ju,v\rangle \quad\text{is non-degenerate}.
		\end{equation}
	\end{assumption}
	Here it should be recalled (see Section \ref{subsec:subspaces}) that the main purpose of \eqref{eq:invertibility_mat2} is to allow us to define suitable spectral projections for embedded eigenvalues.

	Our last technical hypothesis is required in order for the anti-dual operators $H_0'$, $H'$ to be well-defined, see Section \ref{subsec:extension} for more details.
	\begin{assumption}\label{hyp:domaine_H_0_C}
		The domain of the restriction of $H_0$ to $\Hi_C$, defined as
			\begin{equation*}
				\mathcal{D}({H_0}|_{\Hi_C}):=\left\lbrace u\in\mathcal{D}(H_0)\cap\Hi_C, H_0u\in\Hi_C\right\rbrace,
			\end{equation*}
		is dense in $\Hi_C$ for the topology of $\Hi_C$. 
	\end{assumption}

	\subsection{Main results}

		Now we can state our main results. First, we characterize the outgoing (respectively incoming) spectral singularities of $H$ as eigenvalues of $H'$ associated to eigenvectors belonging to the space of outgoing resonant states $\Hi_C'^+(\lambda)$ (respectively $\Hi_C'^-(\lambda)$).
		
		\begin{theorem}\label{thm:spec_sing}
			Suppose that Hypothesis \ref{hyp:domaine_H_0_C} holds. Let $\lambda\in \sigma_\mathrm{ess}(H)$ and suppose that the limits
		\begin{equation*}
			C\Res_0(\lambda \pm i0^+)C:=\lim_{\varepsilon\rightarrow 0^+}C\Res_0(\lambda \pm i\varepsilon)C
		\end{equation*}
		exist in the norm topology of $\mathcal{L}(\Hi)$. The following conditions are equivalent:
				\begin{enumerate}[label=(\roman*)]
					\item\label{it:spec_sing} $\lambda$ is an outgoing/incoming spectral singularity of $H$,
					\item\label{it:res_state} $\lambda$ is an eigenvalue of $H'$ associated to an eigenvector $\Psi\in\Hi_C'^\pm(\lambda)$. 
				\end{enumerate}
		\end{theorem}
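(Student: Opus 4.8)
The plan is to reduce the statement to a Birman--Schwinger analysis of the compact operator $C\Res_0(\lambda+i0^+)CW$ on $\Hi$; since the incoming case is identical after replacing $\lambda+i0^+$ by $\lambda-i0^+$, I treat only the outgoing one. The starting point is the Konno--Kuroda machinery attached to the factorization $V=CWC$: for $z\in\C^+$ one has $z\in\rho(H)$ if and only if $B(z):=\Id+C\Res_0(z)CW$ is invertible in $\mathcal{L}(\Hi)$, in which case the second resolvent identity gives $\Res_H(z)=\Res_0(z)-\Res_0(z)CW\,B(z)^{-1}C\Res_0(z)$. Sandwiching this between $C$ and $CW$ and using the algebraic identity $K-K(\Id+K)^{-1}K=\Id-(\Id+K)^{-1}$ with $K=C\Res_0(z)CW$, I obtain the key formula
\[
C\Res_H(z)CW=\Id-B(z)^{-1},\qquad z\in\C^+\cap\rho(H).
\]
Because $C$ is relatively compact with respect to $H_0$, the operator $K$ is compact, and the assumed norm convergence $C\Res_0(\lambda+i\varepsilon)C\to C\Res_0(\lambda+i0^+)C$ yields $B(\lambda+i\varepsilon)\to B(\lambda+i0^+)=\Id+C\Res_0(\lambda+i0^+)CW$ in $\mathcal{L}(\Hi)$, the limit being of the form $\Id+\text{compact}$.

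Next I would prove that $\lambda$ is an outgoing spectral singularity if and only if $\Ker(B(\lambda+i0^+))\neq\{0\}$. If this kernel is trivial, then $B(\lambda+i0^+)$ is invertible by the Fredholm alternative; continuity of inversion then shows that $B(\lambda+i\varepsilon)$ is invertible for small $\varepsilon>0$ (so $\lambda+i\varepsilon\in\rho(H)$, whence $\lambda$ is not an accumulation point of eigenvalues in $\lambda+i(0,\infty)$) and that $C\Res_H(\lambda+i\varepsilon)CW=\Id-B(\lambda+i\varepsilon)^{-1}\to\Id-B(\lambda+i0^+)^{-1}$, so $\lambda$ is regular. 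Conversely, if $\lambda$ is an outgoing regular point, then, the spectrum of $H$ off the real axis being discrete, $\lambda+i\varepsilon\in\rho(H)$ for small $\varepsilon$ and $C\Res_H(\lambda+i\varepsilon)CW$ converges, so $B(\lambda+i\varepsilon)^{-1}=\Id-C\Res_H(\lambda+i\varepsilon)CW$ converges to some $M\in\mathcal{L}(\Hi)$; passing to the limit in $B(\lambda+i\varepsilon)B(\lambda+i\varepsilon)^{-1}=\Id=B(\lambda+i\varepsilon)^{-1}B(\lambda+i\varepsilon)$ identifies $M$ with a two-sided inverse of $B(\lambda+i0^+)$, forcing the kernel to be trivial. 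This is exactly the contrapositive of the claim.

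I would then transport the condition $\Ker(B(\lambda+i0^+))\neq\{0\}$ to the resonant space $\Hi_C'^{+}(\lambda)=\Ker(\Id+\Res_0(\lambda+i0^+)V')$ through the extension map $C'\in\mathcal{L}(\Hi_C',\Hi)$. Since $C'$ extends $C$, one first checks that $C'\Res_0(\lambda+i0^+)C=C\Res_0(\lambda+i0^+)C$ (both are the $\mathcal{L}(\Hi)$-limit of $C\Res_0(\lambda+i\varepsilon)C$). Using $V'=CWC'$, a direct computation then shows that $\Psi\mapsto C'\Psi$ maps $\Hi_C'^{+}(\lambda)$ into $\Ker(B(\lambda+i0^+))$, with two-sided inverse $\phi\mapsto-\Res_0(\lambda+i0^+)CW\phi$; hence $\Hi_C'^{+}(\lambda)\neq\{0\}$ if and only if $\Ker(B(\lambda+i0^+))\neq\{0\}$. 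Combined with the previous paragraph this already yields \ref{it:res_state}$\Rightarrow$\ref{it:spec_sing}: a nonzero eigenvector lying in $\Hi_C'^{+}(\lambda)$ makes the kernel nontrivial, so $\lambda$ is an outgoing spectral singularity.

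For \ref{it:spec_sing}$\Rightarrow$\ref{it:res_state} it remains to show that every $\Psi\in\Hi_C'^{+}(\lambda)$ is an eigenvector of $H'$ for the eigenvalue $\lambda$. Writing $g:=V'\Psi\in\Hi_C$, so that $\Psi=-\Res_0(\lambda+i0^+)g$, the decisive point --- and the step I expect to be the main obstacle --- is the identity $(H_0'-\lambda)\Res_0(\lambda+i0^+)g=g$ in the extended space. To establish it I would test against $u\in\mathcal{D}(H_0|_{\Hi_C})$: using the Gelfand-triple pairing, the self-adjointness of $H_0$ and the elementary relation $(H_0-\lambda)(H_0-\lambda-i\varepsilon)^{-1}=\Id+i\varepsilon\Res_0(\lambda+i\varepsilon)$, one gets
\[
\scal{(H_0-\lambda)u}{\Res_0(\lambda+i\varepsilon)g}_\Hi=\scal{u}{g}_\Hi+i\varepsilon\,\scal{u}{\Res_0(\lambda+i\varepsilon)g}_\Hi .
\]
Writing $u=Cu_0$, $g=Cg_0$ bounds the last term by $\varepsilon\,\norme{u_0}_\Hi\norme{g_0}_\Hi\norme{C\Res_0(\lambda+i\varepsilon)C}_{\mathcal{L}(\Hi)}$, which tends to $0$ since the operator norm stays bounded as $\varepsilon\to0^+$. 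Passing to the limit in $\Hi_C'$ gives $\langle(H_0-\lambda)u,\Res_0(\lambda+i0^+)g\rangle_{\Hi_C,\Hi_C'}=\scal{u}{g}_\Hi$ for all such $u$; this bounds $|\langle H_0u,\Res_0(\lambda+i0^+)g\rangle_{\Hi_C,\Hi_C'}|$ by a constant multiple of $\norme{u}_{\Hi_C}$, so $\Res_0(\lambda+i0^+)g\in\mathcal{D}(H_0')$, and the density of $\mathcal{D}(H_0|_{\Hi_C})$ in $\Hi_C$ (Hypothesis \ref{hyp:domaine_H_0_C}) turns the above into $(H_0'-\lambda)\Res_0(\lambda+i0^+)g=g$. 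Therefore $(H'-\lambda)\Psi=(H_0'-\lambda)\Psi+V'\Psi=-g+V'\Psi=0$, so any nonzero $\Psi\in\Hi_C'^{+}(\lambda)$ is an eigenvector of $H'$ associated with $\lambda$, which completes the equivalence.
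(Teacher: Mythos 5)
Your argument is correct and follows essentially the same route as the paper's proof: the boundary-value Birman--Schwinger principle (the paper's Proposition \ref{lm:equivalence_point_spectral_regulier}, which you re-derive inline via the Konno--Kuroda identity $C\Res_H(z)CW=\Id-\left(\Id+C\Res_0(z)CW\right)^{-1}$), the Fredholm alternative to extract a kernel element, the identification of $\Ker\left(\Id+C\Res_0(\lambda+i0^+)CW\right)$ with $\Hi_C'^{+}(\lambda)$ through $C'$, and the right-inverse identity $(H_0'-\lambda)\Res_0(\lambda+i0^+)v=v$ on $\Hi_C$, which is exactly the paper's preliminary lemma preceding the theorem. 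The only minor deviation is that you kill the error term $\varepsilon\scal{u}{\Res_0(\lambda+i\varepsilon)g}_\Hi$ by writing $u=Cu_0$, $g=Cg_0$ and using the uniform boundedness of $\norme{C\Res_0(\lambda+i\varepsilon)C}_{\mathcal{L}(\Hi)}$, whereas the paper invokes the $\mathcal{O}(\varepsilon^{-1/2})$ bound on $\norme{\Res_0(\lambda\pm i\varepsilon)C}_{\mathcal{L}(\Hi)}$ of Lemma \ref{lm:tech_est}; both are valid.
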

		Theorem \ref{thm:spec_sing} shows that $\lambda$ is a spectral singularity of $H$ if and only if the equation $H'\Psi=\lambda\Psi$ has a solution $\Psi$ in $\Hi_C'^\pm(\lambda)$ (recall that $H'$ is an extension of $H$, acting in the Hilbert space $\Hi'_C$ which contains the original Hilbert space $\Hi$). For non-self-adjoint Schrödinger operators, this corresponds to $\lambda$ being a real resonance of $H=-\Delta+V(x)$ if and only if the equation $(-\Delta+V(x))\Psi=\lambda\Psi$ has a distributional solution such that $\Psi$ belongs to suitable weighted $L^2$-spaces. See Section \ref{subsec:appli} for a more detailed discussion.
		
		Theorem \ref{thm:spec_sing} also has various consequences that we will detail in Section \ref{sec:spectr_sing}. Here we mention the following two consequences. We will show that the eigenvalues of $H$ embedded in the essential spectrum are special spectral singularities, see Proposition \ref{cor:embed}. Moreover, in the particular case where $H$ is dissipative, we will prove that $H$ cannot have outgoing spectral singularities unless its self-adjoint part, $\mathrm{Re}(H)$, already has some, see Section \ref{sec:dissipative case}.

		Our next result shows that the subspace $\Hi_\mathrm{ads}^\pm(H)$ of asymptotically disappearing states at $\pm\infty$ (recall that $\Hi_\mathrm{ads}^\pm(H)$ has been defined in Section \ref{subsec:subspaces}) coincides with the vector space spanned by all generalized eigenstates of $H$ corresponding to eigenvalues $\lambda$ of $H$ such that $\pm\mathrm{Im}(\lambda)<0$. In other words, the only solutions to \eqref{eq:Schrodinger} that vanish at as $t\to\pm\infty$ are linear combination of generalized eigenstates corresponding to non-real eigenvalues.

		\begin{theorem}\label{thm:caracterisation_etat_disparaissent_à_infini}
			Suppose that Hypotheses \ref{hyp:principe_absorption_limite}-\ref{hyp:singularités_spectrales} hold. Then 
			\begin{equation}
				\Hi_\mathrm{ads}^\pm(H)=\Hi_\mathrm{p}^\pm(H).
			\end{equation}
		\end{theorem}
		An analogous result was proven in \cite{FaFr18_01} in the particular case of dissipative operators, answering a question left as an open problem in \cite{Da80_01}. The proof in \cite{FaFr18_01} relies in an essential way on the existence and properties of wave operators. Besides the fact that we are considering non-dissipative operators, our proof here is more direct -- we do not use scattering theory -- and allows for more general assumptions (compare Hypothesis 2.5 in \cite{FaFr18_01} and Hypothesis \ref{hyp:singularités_spectrales} of the present paper, where a singularity at infinity of the weighted resolvent is allowed). The core of our argument is a suitable spectral decomposition formula, see Proposition \ref{prop:resolution} in the next subsection.

		Next we will prove that the absolutely continuous spectral subspace of $H$ defined in Section \ref{subsec:subspaces} coincides with the orthogonal complement of the point spectral subspace.
		
		 \begin{theorem}\label{thm:caracterisation_espace_absolument_continu}
		 	Suppose that Hypotheses \ref{hyp:principe_absorption_limite}-\ref{hyp:Conjugate_operator} hold. Then
		 	\begin{equation*}
		 		\Hi_\mathrm{ac}(H)=\Ran(\mathrm{Id}-\Pi_\mathrm{p}(H))=\Hi_\mathrm{p}(H^*)^\perp.
		 	\end{equation*}
		 \end{theorem}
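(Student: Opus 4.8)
The plan is to establish the two equalities in turn. The second, $\Ran(\mathrm{Id}-\Pi_\mathrm{p}(H))=\Hi_\mathrm{p}(H^*)^\perp$, is purely algebraic: by Hypothesis \ref{hyp:valeurs_propres_H} and Proposition \ref{prop:spectral_proj}, $\Pi_\mathrm{p}(H)$ is a bounded projection with $\Pi_\mathrm{p}(H)^*=\Pi_\mathrm{p}(H^*)$, and for any bounded projection $P$ one has $\Ran(\mathrm{Id}-P)=\Ker(P)=\Ran(P^*)^\perp$ (indeed $Pu=0$ iff $\scal{u}{P^*v}_\Hi=0$ for all $v$). Applying this with $P=\Pi_\mathrm{p}(H)$ and $\Ran(\Pi_\mathrm{p}(H^*))=\Hi_\mathrm{p}(H^*)$ yields the claim. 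Moreover, as recalled in Section \ref{subsec:abs_cont}, if $v$ is a generalized eigenvector of $H^*$ then $t\mapsto\scal{e^{-itH}u}{v}_\Hi$ fails to be in $L^2(\R)$ unless $\scal{u}{v}_\Hi=0$, so that $\mathcal{M}(H)\subset\Hi_\mathrm{p}(H^*)^\perp$ and hence $\Hi_\mathrm{ac}(H)\subset\Hi_\mathrm{p}(H^*)^\perp=\Ran(\mathrm{Id}-\Pi_\mathrm{p}(H))$. The whole content of the theorem is therefore the reverse inclusion $\Ran(\mathrm{Id}-\Pi_\mathrm{p}(H))\subset\Hi_\mathrm{ac}(H)$.

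For this I would use the regularizing function associated with Hypothesis \ref{hyp:singularit�s_spectrales}, namely the rational function $r$ vanishing to order $\nu_j$ at each spectral singularity $\lambda_j$ and with its only poles at $z_0\in\rho(H)$,
\begin{equation*}
r(z)=(z-z_0)^{-\nu_\infty}\prod_{j=1}^n(z-\lambda_j)^{\nu_j}(z-z_0)^{-\nu_j},
\end{equation*}
for which $r(H)\in\mathcal{L}(\Hi)$ and the boundary values $r(\lambda)C\Res_H(\lambda\pm i0^+)C$ and $r(\lambda)C\Res_H(\lambda\pm i0^+)CW$ are uniformly bounded on $\sigma_\mathrm{ess}(H)$. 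The key reduction is to prove that $r(H)$ maps $\Ran(\mathrm{Id}-\Pi_\mathrm{p}(H))$ into $\mathcal{M}(H)$. Since $\Ran(C)$ is dense in $\Hi$ (as $\Ker(C)=\{0\}$) and $r(H)(\mathrm{Id}-\Pi_\mathrm{p}(H))$ is bounded, it is enough to show $r(H)(\mathrm{Id}-\Pi_\mathrm{p}(H))u\in\mathcal{M}(H)$ for $u\in\Ran(C)$ and then close up.

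The estimate itself rests on the spectral resolution formula of Proposition \ref{prop:resolution}, which represents $r(H)(\mathrm{Id}-\Pi_\mathrm{p}(H))$ as the strong integral $\frac{1}{2\pi i}\int_{\sigma_\mathrm{ess}(H)}r(\lambda)[\Res_H(\lambda+i0^+)-\Res_H(\lambda-i0^+)]\,\mathrm{d}\lambda$. Applying $e^{-itH}$ and pairing with $v\in\Hi$ gives a Stone-type representation $\scal{e^{-itH}r(H)(\mathrm{Id}-\Pi_\mathrm{p}(H))u}{v}_\Hi=\frac{1}{2\pi i}\int e^{-it\lambda}g(\lambda)\,\mathrm{d}\lambda$ with $g(\lambda):=r(\lambda)\scal{[\Res_H(\lambda+i0^+)-\Res_H(\lambda-i0^+)]u}{v}_\Hi$, so that by Plancherel in $t$ it suffices to bound $\norme{g}_{L^2(\R)}^2$ by $c_u\norme{v}_\Hi^2$. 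I would insert the factorization $V=CWC$ into the resolvent identity $\Res_H(z)=\Res_0(z)-\Res_0(z)CWC\Res_H(z)$ and, writing $u=Cf$, move the weights $C$ onto $v$ using $\Res_0(\lambda\pm i0^+)^*=\Res_0(\lambda\mp i0^+)$. Each term of $g(\lambda)$ then appears as a bounded factor (either $r(\lambda)C\Res_H(\lambda\pm i0^+)C$, uniformly bounded by Hypothesis \ref{hyp:singularit�s_spectrales}, or $r(\lambda)$ itself) times a quantity $\norme{C\Res_0(\lambda\pm i0^+)v}_\Hi$. The Kato smoothness of $C$ relative to $H_0$ implied by Hypothesis \ref{hyp:principe_absorption_limite}, in the form \eqref{eq:Smooth_Ka65_01}, gives $\int_\R\norme{C\Res_0(\lambda\pm i0^+)v}_\Hi^2\,\mathrm{d}\lambda\le 2\pi c_0^2\norme{v}_\Hi^2$, and combining these bounds with a Cauchy--Schwarz argument yields $\norme{g}_{L^2}^2\le c_u\norme{v}_\Hi^2$, i.e. $r(H)(\mathrm{Id}-\Pi_\mathrm{p}(H))u\in\mathcal{M}(H)$.

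To conclude, closing up over $u\in\Ran(C)$ gives $\overline{\Ran(r(H)(\mathrm{Id}-\Pi_\mathrm{p}(H)))}\subset\Hi_\mathrm{ac}(H)$. Finally, since $r(H)$ commutes with $H$ and hence with $\Pi_\mathrm{p}(H)$, and since $r$ vanishes only on the finite set $\{\lambda_1,\dots,\lambda_n\}$, whose generalized eigenspaces for $H$ and $H^*$ are contained in $\Hi_\mathrm{p}(H)$, $\Hi_\mathrm{p}(H^*)$, one checks that $r(H)$ is injective with dense range on $\Ran(\mathrm{Id}-\Pi_\mathrm{p}(H))$; thus $\overline{\Ran(r(H)(\mathrm{Id}-\Pi_\mathrm{p}(H)))}=\Ran(\mathrm{Id}-\Pi_\mathrm{p}(H))$, which gives the desired inclusion. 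I expect the main obstacle to be the $L^2$-in-time estimate above: the point is to organize the jump $\Res_H(\lambda+i0^+)-\Res_H(\lambda-i0^+)$, after multiplication by the regularizer $r$, into a product of operators that are either uniformly bounded on $\sigma_\mathrm{ess}(H)$ or weighted free resolvents amenable to Kato smoothness, the delicate free ($H_0$) contribution being precisely what forces one to work first on $\Ran(C)$ and transfer the weight onto the test vector $v$.
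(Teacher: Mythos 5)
Your proposal is correct and follows essentially the same route as the paper's own proof: the easy inclusion via the observation that pairings against generalized eigenvectors of $H^*$ can never lie in $L^2(\R)$ (Proposition \ref{prop:easy_incl_ac}), the density of $\Ran\big(r(H)(\Id-\Pi_\mathrm{p}(H))\big)$ in $\Hi_\mathrm{p}(H^*)^\perp$ (Lemma \ref{lm:closed_range}), and the key $L^2$-in-time estimate obtained from Proposition \ref{prop:resolution} together with the regularized functional calculus of Proposition \ref{prop:functional_calculus_reg}, Plancherel, the resolvent identity for $V=CWC$ with the weights $C$ transferred onto the test vector, Kato smoothness \eqref{eq:Smooth_Ka65_01} and the uniform boundedness of $r(\lambda)C\Res_H(\lambda\pm i0^+)C$ implied by the spectral-singularity hypothesis together with Hypothesis \ref{hyp:principe_absorption_limite} — the paper merely iterates the resolvent identity twice (five terms) where you iterate once (three terms), which works equally well. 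The one slip to repair is that in your Stone-type representation the projection cannot simply be dropped from $g$: since $r(H)(\Id-\Pi_\mathrm{p}(H))u=(\Id-\Pi_\mathrm{p}(H))r(H)u$, the correct identity has the jump of resolvents acting on $u\in\Ran(C)$ while the test vector is replaced by $(\Id-\Pi_\mathrm{p}(H^*))v$ (the paper's $\psi$), whose norm is bounded by a constant times $\norme{v}_\Hi$, so all your subsequent estimates go through unchanged.
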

		 	As mentioned before, in the particular case where $H$ is dissipative, Theorem \ref{thm:caracterisation_espace_absolument_continu} may be compared with a result of \cite{Da80_01}. The absolutely continuous spectral subspace for dissipative operators is defined in \cite{Da80_01} in the same way as in \eqref{eq:def_M(H)}, but with the integral taken over $[0,\infty)$ instead of $\mathbb{R}$. Using the theory of dilations of dissipative operators, it is then proven in \cite{Da80_01} that within such a definition, the absolutely continuous spectral subspace coincides with the orthogonal complement of `bound states' (generalized eigenstates corresponding to real eigenvalues). In our context where $H$ is not necessarily dissipative, the argument of \cite{Da80_01} fails and there is no reason to choose positive times over negative times to define $\Hi_\mathrm{ac}(H)$.
			
			Our definition of $\Hi_\mathrm{ac}(H)$ is also justified by the following two facts. First, Theorem \ref{thm:caracterisation_espace_absolument_continu} generalizes the well-known identity
			$
				\Hi_\mathrm{ac}(H) = \Hi_{\mathrm{p}}(H)^\perp
			$
			which holds for self-adjoint operators without singular continuous spectrum. Another justification comes from dissipative scattering theory. Indeed, combined with the results of \cite{Ma75_01,Da78_01,Da80_01,FaFr18_01}, Theorem \ref{thm:caracterisation_espace_absolument_continu} implies that, in the particular case where $H$ is dissipative, we have $\mathrm{Ran}(W_-(H,H_0))^{\mathrm{cl}}=\Hi_\mathrm{ac}(H)$, where
			$W_-(H,H_0) := \slim \, e^{-\mathrm{i}tH} e^{\mathrm{i}tH_0}$, $t\to\infty$, is the usual wave operator. This again generalizes the well-known relation which holds in the self-adjoint case.
			
			We mention that if $H$ has no embedded eigenvalues, Hypothesis \ref{hyp:Conjugate_operator} can be dropped in the statement of Theorem \ref{thm:caracterisation_espace_absolument_continu}. This is also the case if $H$ is supposed to be dissipative (see Proposition \ref{prop:ac-diss}). In the general case, it is however clear that some assumption should be added to treat the pathological case where the map in \eqref{eq:invertibility_mat2} is degenerate. Indeed, considering the simplest case where $\mathrm{m}_\lambda=1$, if $\langle J\varphi,\varphi\rangle=0$ for any $\varphi\in\mathrm{Ker}(H-\lambda)$, then one can check that $\mathrm{Ker}(H-\lambda)\subset\Hi_{\mathrm{p}}(H^*)^\perp$. Therefore, since $\Hi_\mathrm{ac}(H)$ should not contain eigenstates of $H$, we do not expect that the equality $\Hi_\mathrm{ac}(H)=\Hi_\mathrm{p}(H^*)^\perp$ holds. Another possibility to handle such pathological cases might be to suitably modify the definition of $\Hi_\mathrm{ac}(H)$. We do not consider this possibility here.
			
			To prove Theorem \ref{thm:caracterisation_espace_absolument_continu}, we cannot rely on the theory of unitary dilations as in the dissipative case studied in \cite{Da80_01}. In the same way as for Theorem \ref{thm:caracterisation_etat_disparaissent_à_infini}, our proof of Theorem \ref{thm:caracterisation_espace_absolument_continu} relies on the spectral decomposition formula stated in Proposition \ref{prop:resolution}.

		The next remark gives the $J$-orthogonal spectral decomposition of the Hilbert space mentioned in the introduction.
\begin{remark}		 
		 Combining Theorems \ref{thm:caracterisation_etat_disparaissent_à_infini} and \ref{thm:caracterisation_espace_absolument_continu}, using in addition that $J:\Hi_{\mathrm{p}}(H)\to\Hi_{\mathrm{p}}(H^*)$ is bijective and that the map $\Hi_{\mathrm{p}}(H) \ni (u,v)\mapsto \langle Ju,v\rangle$ is non-degenerate, we obtain the following $J$-orthogonal direct sum decompositions of the Hilbert space: 
		 \begin{align*}
		 	\mathcal{H} &= \mathcal{H}_{\mathrm{ac}}(H) \oplus \mathcal{H}_{\mathrm{p}}(H) \\
			&= \mathcal{H}_{\mathrm{ac}}(H) \oplus \Hi_{\mathrm{disc}}(H)\oplus\Hi_{\mathrm{emb}}(H) \\
			&= \mathcal{H}_{\mathrm{ac}}(H) \oplus \Hi_{\mathrm{ads}}^+(H)\oplus\Hi_{\mathrm{ads}}^-(H)\oplus\Hi_{\mathrm{disc}}^0(H)\oplus\Hi_{\mathrm{emb}}(H),		 
		 \end{align*}
		the direct sum $\Hi_{\mathrm{disc}}^0(H)\oplus\Hi_{\mathrm{emb}}(H)=:\mathcal{H}_{\mathrm{b}}(H)$ being the space of `bound states', i.e. the closure of the vector space spanned by all generalized eigenvectors of $H$ corresponding to real eigenvalues (either isolated or embedded). 
		\end{remark}

		\subsection{Application to Schrödinger operators}\label{subsec:appli}
		We suppose in this section that
		\begin{equation*}
			\Hi=L^2(\mathbb{R}^d), \quad H_0=-\Delta \quad\text{and}\quad V\text{ is a complex-valued potential.}
		\end{equation*}
		For simplicity, we suppose that $d=3$.
		
		\subsubsection{Spectral singularities for short-range complex potentials}
		We begin with showing that Theorem \ref{thm:spec_sing} can be applied to $H=-\Delta+V(x)$, under a short-range condition on $V$. It is well-known that the limits
		\begin{equation}\label{eq:LAP-Delta}
			\langle x\rangle^{-s}\Res_0(\lambda\pm i0^+)\langle x\rangle^{-s} , \quad \lambda >0,
		\end{equation}
		exist in the norm topology of $\mathcal{L}(\Hi)$, for any $s>\frac12$, where $\langle x\rangle:=(1+x^2)^{\frac12}$. Hence, assuming that $V$ satisfies the short-range condition
		\begin{equation}\label{eq:short-range}
			x\mapsto \langle x\rangle^{\sigma}V(x) \in L^\infty(\mathbb{R}^3), \quad \sigma>1,
		\end{equation}
		we can choose $C$ to be the multiplication operator by $\langle x\rangle^{-\sigma/2}$. The Hilbert space $\Hi_C$ then identifies with the weighted $L^2$-space
		\begin{equation*}
			\Hi_C=L^2_{\sigma/2}:=\big\{f :\mathbb{R}^3\to\C , \, x\mapsto \langle x\rangle^{\frac{\sigma}{2}}f(x) \in L^2(\mathbb{R}^3) \big \},
		\end{equation*}
		equipped with the usual norm, its dual being given by
		\begin{equation*}
			\Hi'_C=L^2_{-\sigma/2}=\big\{f :\mathbb{R}^3\to\C , \, x\mapsto \langle x\rangle^{-\frac{\sigma}{2}}f(x) \in L^2(\mathbb{R}^3) \big \}.
		\end{equation*}
		Since the set of smooth functions with compact supports is contained in $\mathcal{D}(H_0)\cap \Hi_C$, one easily deduces that Hypothesis \ref{hyp:domaine_H_0_C} is satisfied. Applying Theorem \ref{thm:spec_sing}, we thus obtain the following proposition.
		\begin{proposition}\label{prop:3.4}
			Suppose that $V$ is a complex-valued potential satisfying \eqref{eq:short-range}. Let $C(x)=\langle x\rangle^{-\sigma/2}$. Then for all $\lambda>0$, the following conditions are equivalent
				\begin{enumerate}[label=(\roman*)]
					\item\label{it:spec_sing_schr} $\lambda$ is an outgoing/incoming spectral singularity of $H$ in the sense of Definition \ref{def:point_spectral_regulier_classique_pour_H},
					\item\label{it:res_state_schr} There exists $\Psi\in\Hi_C'^\pm(\lambda) \subset L^2_{-\sigma/2}$, $\Psi\neq0$, such that
						\begin{equation*}
							(-\Delta+V(x)-\lambda)\Psi=0.
						\end{equation*} 
				\end{enumerate}
		\end{proposition}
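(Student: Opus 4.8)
The plan is to reduce everything to the abstract equivalence of Theorem \ref{thm:spec_sing} and then to identify the abstract eigenvalue equation $H'\Psi=\lambda\Psi$ with the distributional Schr\"odinger equation. First I would record the relevant identifications in this concrete setting: with $C(x)=\langle x\rangle^{-\sigma/2}$ one has $\Hi_C=L^2_{\sigma/2}$ and $\Hi_C'=L^2_{-\sigma/2}$, and the factorization $V=CWC$ holds with $W$ the multiplication operator by $\langle x\rangle^{\sigma}V(x)$, which is bounded precisely because of the short-range condition \eqref{eq:short-range}. Since $\sigma/2>\frac12$, the limiting absorption principle \eqref{eq:LAP-Delta} shows that the limits $C\Res_0(\lambda\pm i0^+)C=\langle x\rangle^{-\sigma/2}\Res_0(\lambda\pm i0^+)\langle x\rangle^{-\sigma/2}$ exist in $\mathcal{L}(\Hi)$ for every $\lambda>0$, and Hypothesis \ref{hyp:domaine_H_0_C} holds because $C_0^\infty(\R^3)\subset\mathcal{D}(H_0)\cap\Hi_C$. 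Thus the hypotheses of Theorem \ref{thm:spec_sing} are in force, and $\lambda$ is an outgoing/incoming spectral singularity of $H$ if and only if there is $\Psi\in\Hi_C'^\pm(\lambda)$, $\Psi\neq0$, with $H'\Psi=\lambda\Psi$.

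It then remains to show that, for $\Psi\in\Hi_C'=L^2_{-\sigma/2}$, the equation $H'\Psi=\lambda\Psi$ is equivalent to $(-\Delta+V-\lambda)\Psi=0$ in the sense of distributions. To obtain the PDE from the abstract equation, I would test against $u\in C_0^\infty(\R^3)\subset\mathcal{D}(H|_{\Hi_C})$: by the definition of $H'$ in Section \ref{subsec:extension}, $H'\Psi=\lambda\Psi$ yields $\langle H^*u,\Psi\rangle_{\Hi_C,\Hi_C'}=\lambda\langle u,\Psi\rangle_{\Hi_C,\Hi_C'}$, and since $H^*=-\Delta+\overline V$ and the anti-duality bracket extends the $L^2$ pairing $\int\overline u\,\Psi$, an integration by parts (moving $-\Delta$ onto the distribution $\Psi$ and using that $\Delta$ has real coefficients) gives $\langle(-\Delta+V-\lambda)\Psi,\overline u\rangle=0$ for all $u\in C_0^\infty$; as $\overline u$ ranges over $C_0^\infty$ this is exactly $(-\Delta+V-\lambda)\Psi=0$ in $\mathcal{D}'(\R^3)$.

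For the converse I would start from a distributional solution $\Psi\in L^2_{-\sigma/2}$ and promote it to an element of $\mathcal{D}(H')$ satisfying the abstract equation. The key point is the bound $\norme{V^*u}_{\Hi_C}\le\norme{W}_{\mathcal{L}(\Hi)}\norme{u}_{\Hi_C}$, which together with $|\langle u,\Psi\rangle_{\Hi_C,\Hi_C'}|\le\norme{u}_{\Hi_C}\norme{\Psi}_{\Hi_C'}$ lets me rewrite the identity $\langle H_0u,\Psi\rangle_{\Hi_C,\Hi_C'}=\lambda\langle u,\Psi\rangle_{\Hi_C,\Hi_C'}-\langle V^*u,\Psi\rangle_{\Hi_C,\Hi_C'}$, valid on $C_0^\infty$, to produce $|\langle H_0u,\Psi\rangle_{\Hi_C,\Hi_C'}|\le(\abso{\lambda}+\norme{W}_{\mathcal{L}(\Hi)})\norme{\Psi}_{\Hi_C'}\norme{u}_{\Hi_C}$. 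This shows $\Psi\in\mathcal{D}(H_0')=\mathcal{D}(H')$, and extending the identity $\langle H^*u,\Psi\rangle_{\Hi_C,\Hi_C'}=\lambda\langle u,\Psi\rangle_{\Hi_C,\Hi_C'}$ from $C_0^\infty$ to all of $\mathcal{D}(H|_{\Hi_C})$ by continuity gives $H'\Psi=\lambda\Psi$.

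The main obstacle is precisely this last extension step: it requires that $C_0^\infty(\R^3)$ be a core for $H_0|_{\Hi_C}$, i.e. that any $u\in\mathcal{D}(H|_{\Hi_C})$ be approximable by test functions simultaneously in $\Hi_C$ and under $H_0$, so that the continuity bound just established propagates to the whole domain; careful bookkeeping of the conjugations in the anti-dual bracket is the other point demanding attention. Once the identification $H'\Psi=\lambda\Psi\Leftrightarrow(-\Delta+V-\lambda)\Psi=0$ is established, combining it with Theorem \ref{thm:spec_sing} yields the stated equivalence, the membership $\Psi\in\Hi_C'^\pm(\lambda)\subset L^2_{-\sigma/2}$ being carried over directly from condition \ref{it:res_state}.
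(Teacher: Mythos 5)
Your overall route coincides with the paper's: check that the limiting absorption principle \eqref{eq:LAP-Delta} gives the existence of $C\Res_0(\lambda\pm i0^+)C$ for every $\lambda>0$ when $C=\langle x\rangle^{-\sigma/2}$, $\sigma>1$, check Hypothesis \ref{hyp:domaine_H_0_C} using $C_0^\infty(\mathbb{R}^3)\subset\mathcal{D}(H_0)\cap\Hi_C$, and apply Theorem \ref{thm:spec_sing}. The paper essentially stops there, leaving the dictionary between the abstract equation $H'\Psi=\lambda\Psi$ and the distributional equation $(-\Delta+V-\lambda)\Psi=0$ implicit; your forward implication (testing $H'\Psi=\lambda\Psi$ against $u\in C_0^\infty(\mathbb{R}^3)\subset\mathcal{D}(H|_{\Hi_C})$ and unwinding the anti-duality bracket, with the conjugations handled correctly) is a sound way of making that step explicit.

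The gap is in the converse implication, on which your proof of \ref{it:res_state_schr}$\Rightarrow$\ref{it:spec_sing_schr} rests as written: promoting a distributional solution $\Psi\in L^2_{-\sigma/2}$ to an element of $\mathcal{D}(H')$ requires, as you yourself note, that $C_0^\infty(\mathbb{R}^3)$ be a core of $H_0|_{\Hi_C}$ for the graph topology of $\Hi_C$, and you leave this unproven. This is a real hole in the write-up, but it is an avoidable one, because the implication ``PDE $\Rightarrow$ abstract eigenvalue equation'' is never needed. Condition \ref{it:res_state_schr} already contains the membership $\Psi\in\Hi_C'^\pm(\lambda)$, which by Definition \ref{def:resonant} means $(\Id+\Res_0(\lambda\pm i0^+)V')\Psi=0$ with $\Psi\neq0$. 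This alone says that $\Id+\Res_0(\lambda\pm i0^+)V'$ has nontrivial kernel, hence is not invertible in $\mathcal{L}(\Hi_C')$, and Proposition \ref{lm:equivalence_point_spectral_regulier} then yields \ref{it:spec_sing_schr} directly; the differential equation plays no role in this direction. (Equivalently: by the right-inverse lemma of Section \ref{sec:proof_thm_spec_sing}, any nonzero $\Psi\in\Hi_C'^\pm(\lambda)$ automatically satisfies $\Psi\in\mathcal{D}(H')$ and $(H'-\lambda)\Psi=0$, since $V'\Psi\in\Hi_C$; note that the paper's own proof of the implication \ref{it:res_state}$\Rightarrow$\ref{it:spec_sing} in Theorem \ref{thm:spec_sing} uses only the membership, never the eigenvalue equation.) So the proof should be restructured: \ref{it:spec_sing_schr}$\Rightarrow$\ref{it:res_state_schr} is Theorem \ref{thm:spec_sing} followed by your forward identification, and \ref{it:res_state_schr}$\Rightarrow$\ref{it:spec_sing_schr} is the Fredholm argument just described. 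With that rearrangement your argument is complete, and the core property of $C_0^\infty(\mathbb{R}^3)$ never enters.
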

		If $\Psi$ in \ref{it:res_state_schr} belongs to $L^2(\mathbb{R}^3)$, $\lambda$ is an eigenvalue of $H$. Otherwise, $\lambda$ is usually called a real resonance associated to a resonant state $\Psi\in\Hi_C'^\pm(\lambda)\setminus L^2(\mathbb{R}^3)$. Such a resonant state satisfies the outgoing/incoming Sommerfeld radiation condition
		\begin{equation*}
u(x)=|x|^{\frac12 } e^{\pm i \lambda^{\frac12} | x | } \Big ( a ( \frac{x}{|x|} ) + o(1) \Big ) , \quad |x| \to \infty ,
\end{equation*}
with $a \in L^2( S^2 )$, $a\neq 0$.
		
		Note that if $V$ is real-valued, Agmon's fundamental work \cite{Ag75_01} shows that $H=-\Delta+V$ has no spectral singularities in $(0,\infty)$. Likewise, if $H$ is dissipative, a simple argument combined with \cite{Ag75_01} proves that $H$ cannot have outgoing spectral singularities in $(0,\infty)$ (see \cite{Wa12_01}). In general, however, spectral singularities cannot be excluded (see again \cite{Wa12_01} for an example showing that, for any $\lambda>0$, there exists a smooth, compactly supported potential $V$ such that $\lambda$ is an incoming spectral singularity of $H$ in the dissipative case).
		
		At the threshold energy $0$, the limiting absorption principle states that the limits
		\begin{equation*}
			\langle x\rangle^{-s}\Res_0(\pm i0^+)\langle x\rangle^{-s} , 
		\end{equation*}
		exist in the norm topology of $\mathcal{L}(\Hi)$ for any $s>1$. Note that, as we will argue in Section \ref{sec:spectr_sing}, the two limits above in fact coincide with the limit
		\begin{equation*}
			\langle x\rangle^{-s}\Res_0(0)\langle x\rangle^{-s} := \lim_{z\to0,z\in\mathbb{C}\setminus\mathbb{R}_+}\langle x\rangle^{-s}\Res_0(z)\langle x\rangle^{-s} . 
		\end{equation*}		
		(Note also that the limit $\langle x\rangle^{-s}\Res_0(0)\langle x\rangle^{-s'}$ exists, more generally, provided that $s,s'>1/2$ and $s+s'>2$, see \cite{JeKa79_01}.) Strengthening the short-range condition \eqref{eq:short-range} to
		\begin{equation}\label{eq:short-range-zero}
			x\mapsto \langle x\rangle^{\sigma}V(x) \in L^\infty(\mathbb{R}), \quad \sigma>2,
		\end{equation}
		we can choose $C$ to be the multiplication operator by $\langle x\rangle^{-\sigma/2}$ and proceed as before. Since, as we will see in Section \ref{sec:spectr_sing}, $0$ is an outgoing spectral singularity of $H$ if and only if it is an incoming spectral singularity of $H$, this gives the following proposition.
		\begin{proposition}
			Suppose that $V$ is a complex-valued potential satisfying \eqref{eq:short-range-zero}. Let $C(x)=\langle x\rangle^{-\sigma/2}$. Then the following conditions are equivalent
				\begin{enumerate}[label=(\roman*)]
					\item\label{it:spec_sing_schr-zero} $0$ is a spectral singularity of $H$ in the sense of Definition \ref{def:point_spectral_regulier_classique_pour_H},
					\item\label{it:res_state_schr-zero} There exists $\Psi\in\Hi_C'^\pm(\lambda) \subset L^2_{-\sigma/2}$ such that
						\begin{equation*}
							(-\Delta+V(x))\Psi=0.
						\end{equation*} 
				\end{enumerate}
		\end{proposition}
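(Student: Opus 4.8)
The plan is to obtain this proposition exactly as Proposition \ref{prop:3.4} was obtained at positive energies, namely by applying Theorem \ref{thm:spec_sing} at the threshold point $\lambda=0$, the only new ingredient being a threshold analysis of the free resolvent. First I would verify the two hypotheses needed to invoke Theorem \ref{thm:spec_sing} with $C(x)=\langle x\rangle^{-\sigma/2}$. Hypothesis \ref{hyp:domaine_H_0_C} is checked as in the positive-energy case: $C_c^\infty(\mathbb{R}^3)$ is dense in $\Hi_C=L^2_{\sigma/2}$, is contained in $\mathcal{D}(H_0)\cap\Hi_C$, and is mapped by $H_0=-\Delta$ into $\Hi_C$, so $\mathcal{D}(H_0|_{\Hi_C})$ is dense in $\Hi_C$. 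The substantive requirement of Theorem \ref{thm:spec_sing} is the existence of the limits $C\Res_0(0\pm i0^+)C$ in $\mathcal{L}(\Hi)$. Here is where the strengthened short-range condition \eqref{eq:short-range-zero} with $\sigma>2$ enters: the threshold limiting absorption principle for $-\Delta$ in $d=3$ guarantees that $\langle x\rangle^{-s}\Res_0(0\pm i0^+)\langle x\rangle^{-s}$ exist in $\mathcal{L}(\Hi)$ provided $s>1$ (more precisely, by \cite{JeKa79_01}, the relevant boundary value exists once $s,s'>\tfrac12$ and $s+s'>2$), and with $s=\sigma/2$ this is exactly the constraint $\sigma>2$.

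Once these hypotheses are in place, Theorem \ref{thm:spec_sing} yields the equivalence between $0$ being an outgoing/incoming spectral singularity of $H$ and $0$ being an eigenvalue of the extension $H'$ associated to an eigenvector $\Psi\in\Hi_C'^\pm(0)$. It then remains to identify $H'$ concretely. As in the discussion following Proposition \ref{prop:3.4}, in the Schr\"odinger realization the extension $H'$ acts on $\Hi_C'=L^2_{-\sigma/2}$ as the distributional operator $-\Delta+V(x)$, so the eigenvalue equation $H'\Psi=0$ is precisely $(-\Delta+V(x))\Psi=0$ with $\Psi\in\Hi_C'^\pm(0)\subset L^2_{-\sigma/2}$. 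This produces the stated equivalence, except that it a priori carries an outgoing/incoming label.

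The remaining, and genuinely new, step is to remove the $\pm$ distinction, i.e.\ to show that $0$ is an outgoing spectral singularity if and only if it is an incoming one. The mechanism is the coincidence of the two threshold boundary values, $\Res_0(0+i0^+)=\Res_0(0-i0^+)$, understood in $\mathcal{L}(\Hi_C,\Hi_C')$. This is exactly the fact announced in the paragraph preceding the proposition: both limits equal the single radial limit $\lim_{z\to0,\,z\in\C\setminus\R_+}C\Res_0(z)C$. I would justify it from the holomorphy of $z\mapsto\Res_0(z)$ on $\C\setminus[0,\infty)$ together with continuity of the weighted resolvent up to the tip of the slit $\{0\}$: since the spectrum of $-\Delta$ fills $[0,\infty)$ and $0$ is its endpoint, there is no branch cut separating the approaches from $\C^+$ and $\C^-$ at $0$, so the nontangential limits from above and below agree. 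Consequently $\Hi_C'^+(0)=\Ker(\Id+\Res_0(0+i0^+)V')=\Ker(\Id+\Res_0(0-i0^+)V')=\Hi_C'^-(0)$, whence the outgoing and incoming notions collapse to a single one and the symmetric statement follows.

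The main obstacle is this last step, the equality of the boundary values at threshold: unlike the positive-energy case of Proposition \ref{prop:3.4}, where the $+i0^+$ and $-i0^+$ limits genuinely differ (encoding outgoing versus incoming Sommerfeld behaviour), at the threshold one must establish their coincidence, which rests on the Jensen--Kato threshold expansion of $\Res_0(z)$ near $z=0$ in weighted spaces rather than on the bare limiting absorption principle. Everything else is a direct transcription of the positive-energy argument.
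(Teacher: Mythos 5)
Your proposal is correct and follows essentially the same route as the paper: check Hypothesis \ref{hyp:domaine_H_0_C}, use the Jensen--Kato threshold limiting absorption principle (which is exactly where $\sigma>2$ enters), apply Theorem \ref{thm:spec_sing} at $\lambda=0$, and then remove the outgoing/incoming distinction via the coincidence of the two boundary values of the weighted free resolvent at the tip of the slit. The only minor difference is that you establish this coincidence directly as the operator equality $\Res_0(0+i0^+)=\Res_0(0-i0^+)$ in $\mathcal{L}(\Hi_C,\Hi_C')$, giving $\Hi_C'^+(0)=\Hi_C'^-(0)$, whereas the paper delegates it to Proposition \ref{prop:equivalence_point_spec_regulier_bord_Lindelof_classique}, part \eqref{it:threshold}; both arguments rest on the same continuity of $z\mapsto C\Res_0(z)C$ up to the threshold.
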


Anticipating results that we will prove in the abstract setting in the case where $H$ is dissipative (see Section \ref{sec:dissipative case}), we also have the following proposition.
		\begin{proposition}\label{prop:schr_diss}
			Suppose that $V$ is a complex-valued potential such that $\mathrm{Im}(V)\le0$ and $\mathrm{Im}(V)<0$ on a non-trivial open set. Suppose that \eqref{eq:short-range} holds and let $C(x)=\langle x\rangle^{-\sigma/2}$. Then 
			\begin{equation*}
			H \text{ has no positive outgoing spectral singularities}.
			\end{equation*}
			In particular, $H$ has no positive embedded eigenvalues. Suppose in addition that \eqref{eq:short-range-zero} holds. Then 
			\begin{equation*}
				0 \text{ is not a spectral singularity of }H.
			\end{equation*}
		\end{proposition}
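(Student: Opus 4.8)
The plan is to feed $H=-\Delta+V$ into the dissipative machinery of Section~\ref{sec:dissipative case} and combine it with Agmon's theory \cite{Ag75_01} at positive energies, while reserving a direct energy--unique-continuation argument, which is where the strict dissipativity is genuinely used, for the threshold. First I would record that the model fits the abstract framework: taking $C(x)=\langle x\rangle^{-\sigma/2}$ and letting $W$ be multiplication by $\langle x\rangle^{\sigma}V(x)$, the short-range condition \eqref{eq:short-range} makes $W$ bounded, and $\im(V)\le0$ gives $\im(W)=\langle x\rangle^{\sigma}\im(V)\le0$, so $H=H_0+CWC$ is dissipative; the limiting absorption principle for $H_0=-\Delta$ with weight $C$ is classical (see \eqref{eq:LAP-Delta}), so the hypotheses needed to invoke Section~\ref{sec:dissipative case} hold. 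Its self-adjoint part is $\re(H)=-\Delta+\re(V)$, with $\re(V)$ a real short-range potential.

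For $\lambda>0$ I would invoke the main dissipative result of Section~\ref{sec:dissipative case}: an outgoing spectral singularity of $H$ at $\lambda$ forces a spectral singularity of $\re(H)$ at $\lambda$. But $\re(H)=-\Delta+\re(V)$ has, by Agmon's theorem \cite{Ag75_01}, no positive eigenvalue and satisfies the limiting absorption principle throughout $(0,\infty)$; equivalently $C\Res_{\re(H)}(\lambda\pm i0^+)CW$ exists for every $\lambda>0$, so $\re(H)$ has no positive spectral singularity. Hence $H$ has no positive outgoing spectral singularity, reproducing the argument of \cite{Wa12_01}. The statement on embedded eigenvalues then follows at once: by Proposition~\ref{cor:embed}, any embedded eigenvalue of $H$ is a spectral singularity whose $L^2$-eigenvector lies in $\Hi_C'^+(\lambda)$, so by Theorem~\ref{thm:spec_sing} it is an outgoing spectral singularity, and these have just been excluded.

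The threshold $0$ is the crux, and here the reduction to $\re(H)$ is useless since a real short-range potential may carry a zero-energy resonance; this is exactly where $\im(V)<0$ on a nontrivial open set enters. Under the reinforced decay \eqref{eq:short-range-zero} ($\sigma>2$) the limiting absorption principle holds at $0$, and, as established in Section~\ref{sec:spectr_sing}, $0$ is an outgoing spectral singularity iff it is an incoming one iff it is a spectral singularity, so it suffices to exclude the outgoing case. Arguing by contradiction, the zero-energy counterpart of Proposition~\ref{prop:3.4} yields a nonzero $\Psi\in\Hi_C'^+(0)\subset L^2_{-\sigma/2}$ solving $(-\Delta+V)\Psi=0$. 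Pairing with $\overline{\Psi}$, integrating over a ball $B_R$, applying Green's formula and taking imaginary parts gives $\im\int_{\partial B_R}\overline{\Psi}\,\partial_r\Psi\,\mathrm{d}S=\int_{B_R}\im(V)|\Psi|^2\,\mathrm{d}x$. At zero energy the resonant state is non-oscillatory, $\Psi(x)=c|x|^{-1}+O(|x|^{-2})$, so the boundary flux on the left is real to leading order and tends to $0$ as $R\to\infty$; since $\im(V)|\Psi|^2\in L^1(\R^3)$ by \eqref{eq:short-range-zero}, letting $R\to\infty$ forces $\int_{\R^3}\im(V)|\Psi|^2\,\mathrm{d}x=0$. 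With $\im(V)\le0$ and $\im(V)<0$ on the open set $U$, this yields $\Psi\equiv0$ on $U$, whence the strong unique continuation property for $-\Delta+V$ with $V\in L^\infty$ gives $\Psi\equiv0$, a contradiction. Thus $0$ is not a spectral singularity.

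The main obstacle I anticipate is the threshold step: making rigorous the vanishing of the zero-energy boundary flux directly from $\Psi\in L^2_{-\sigma/2}$ and the radiation condition (rather than the heuristic asymptotics above), and justifying the Green's-formula manipulation for the merely $H^2_{\mathrm{loc}}$ solution $\Psi$; a secondary point is to confirm that the abstract hypotheses of Section~\ref{sec:dissipative case} are indeed met by $-\Delta+V$ with these weights near $0$.
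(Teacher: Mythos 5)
Your positive-energy argument is correct, but it is not the paper's route. You reduce to the self-adjoint part via Proposition \ref{prop:no_spectr_sing_diss} (an outgoing spectral singularity of $H$ forces one for $\re(H)=-\Delta+\re(V)$) and then invoke Agmon \cite{Ag75_01} to exclude positive spectral singularities of the real short-range operator; this is precisely the argument of \cite{Wa12_01}, which the paper mentions only in a remark. The paper instead runs one argument uniformly in $\lambda\ge0$: if $\lambda$ is an outgoing spectral singularity, Proposition \ref{prop:3.4} (or its zero-energy analogue) produces a nonzero $\Psi\in L^2_{-\sigma/2}$ with $(-\Delta+V-\lambda)\Psi=0$; the dissipativity consequence \eqref{eq:W2_spec_sing} gives $W_2C'\Psi=0$, i.e. $\im(V)\Psi=0$ a.e. (since $C$ is an injective multiplication operator), hence $\Psi=0$ on the open set $U$ where $\im(V)<0$; unique continuation \cite[Theorem XIII.63]{ReSi80_01} then yields $\Psi\equiv0$, a contradiction. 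What the paper's route buys is exactly the uniformity: it needs no absence of positive eigenvalues for $\re(H)$, and it is insensitive to the threshold, where (as you correctly note) the reduction to $\re(H)$ breaks down because a real short-range potential can have a zero-energy resonance.

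The genuine gap is your treatment of $\lambda=0$, and it is where your proof as written fails. The identity $\im\int_{\partial B_R}\overline{\Psi}\,\partial_r\Psi\,\mathrm{d}S=\int_{B_R}\im(V)|\Psi|^2\,\mathrm{d}x$ is fine, but the conclusion rests on two unproved claims: the asymptotics $\Psi(x)=c|x|^{-1}+O(|x|^{-2})$, and the assertion that the flux is real to leading order and tends to $0$. From $\Psi\in L^2_{-\sigma/2}$ with only $\sigma>2$ and the integral equation $\Psi=-\Res_0(0)V'\Psi$ one can extract $\Psi(x)=c(4\pi|x|)^{-1}+o(|x|^{-1})$ at best (an $O(|x|^{-2})$ remainder would demand much stronger decay of $V$), and the flux also involves $\partial_r\Psi$, whose pointwise control requires separate elliptic estimates; the cross terms between the leading profile and the remainder are exactly what must be shown to vanish, and you concede this is open. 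The missing idea is that no asymptotics are needed at all: \eqref{eq:W2_spec_sing} is proved abstractly (in the proof of Proposition \ref{prop:no_spectr_sing_diss}) from $\Psi=-\Res_0(\lambda+i0^+)V'\Psi$ by computing $\im\langle V'\Psi,\Psi\rangle_{\Hi_C,\Hi'_C}$ in two ways, the limit $-\varepsilon\|\Res_0(\lambda-i\varepsilon)V'\Psi\|_\Hi^2\le0$ playing the role of your boundary flux; this applies verbatim at $\lambda=0$ because under \eqref{eq:short-range-zero} the limits \eqref{eq:LAP_lambda} exist at $0$ (here $\sigma/2>1$). Replacing your Green's-formula step by an appeal to \eqref{eq:W2_spec_sing} closes the gap, and in fact subsumes your positive-energy argument as well.
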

		See Section \ref{sec:dissipative case} for a proof of Proposition \ref{prop:schr_diss}. Note also that the results of Proposition \ref{prop:schr_diss} have been established in \cite{Wa11_01,Wa12_01}, using different arguments.

		\subsubsection{Asymptotically disappearing states for compactly supported complex potentials}
		We now show that Theorems \ref{thm:caracterisation_etat_disparaissent_à_infini} and \ref{thm:caracterisation_espace_absolument_continu} can be applied to $H=-\Delta+V(x)$ under the condition that
\begin{equation}\label{eq:V_comp_supp}
V \in L^\infty_{ \mathrm{c} }( \mathbb{R}^3 ) := \{ u \in L^\infty( \mathbb{R}^3 ) , \, u \text{ is compactly supported} \} .
\end{equation}

Similarly as in \eqref{eq:LAP-Delta}, we have
		\begin{equation*}
			\sup_{z\in\C^\pm}\big\|\langle x\rangle^{-s}\Res_0(z)\langle x\rangle^{-s}\big\|_{\mathcal{L}(\Hi)}<\infty,
		\end{equation*}
for any $s>1/2$. Hence, choosing $C(x)=\langle x\rangle^{-s}$, Hypothesis \ref{hyp:principe_absorption_limite} is satisfied.
		
Assuming \eqref{eq:V_comp_supp}, it is known that $H$ has only finitely many eigenvalues with finite algebraic multiplicities. See, e.g., \cite{FrLaSa16_01} and references therein. In particular, Hypothesis \ref{hyp:valeurs_propres_H} is satisfied.

To verify that Hypothesis \ref{hyp:singularités_spectrales} holds, we can rely on the theory of resonances, defined as poles of the meromorphic extension of the weighted resolvent, see e.g. \cite{DyZw19_01}. Assuming \eqref{eq:V_comp_supp}, the map 
\begin{equation*}
\{ z \in \mathbb{C} , \mathrm{Im}( z ) > 0 \} \ni z \mapsto ( H - z^2 )^{-1} : L^2( \mathbb{R}^3 ) \to L^2( \mathbb{R}^3 ) 
\end{equation*}
is meromorphic and extends to a meromorphic map
\begin{equation}
\mathbb{C} \ni z \mapsto \Res(z^2) : L^2_{ \mathrm{c} }( \mathbb{R}^3 ) \to L^2_{ \mathrm{loc} }( \mathbb{R}^3 ) , \label{eq:mero_ext}
\end{equation}
where $L^2_{\mathrm{c}}(\mathbb{R}^3):=\{u\in L^2(\mathbb{R}^3),u\text{ is compactly supported} \}$ and $L^2_{\mathrm{loc}}( \mathbb{R}^3 ) :=\{u:\mathbb{R}^3\to\mathbb{C},u\in L^2(K) \text{ for all compact set } K \subset \mathbb{R}^3 \}$. Poles of the map in \eqref{eq:mero_ext} are called resonances of $H$. One then verifies that a real resonance $\pm\lambda_0$ of $H$, with $\lambda_0\ge0$, corresponds to an outgoing/incoming spectral singularity $\lambda_0^2$ in the sense of Definition \ref{def:point_spectral_regulier_classique_pour_H}. Moreover, $H$ has only finitely many spectral singularities $\{\lambda_1,\dots,\lambda_n\}$ and Hypothesis \ref{hyp:singularités_spectrales} is satisfied, with $\nu_j$ the multiplicity of the corresponding resonances $\pm\sqrt{\lambda_j}$, and $\nu_\infty=0$. See e.g. \cite{DyZw19_01} and references therein for an exposition of the theory of resonances of Schrödinger operators, and \cite[Section 6]{FaFr18_01} for a more detailed comparison between the notions of resonances and spectral singularities considered in this paper.
		
		Applying Theorem \ref{thm:caracterisation_etat_disparaissent_à_infini}, we obtain the following result.
		\begin{proposition}\label{prop:schr_ads}
			Suppose that $V$ is a complex-valued potential such that $V \in L^\infty_{ \mathrm{c} }( \mathbb{R}^3 )$. Then 
			\begin{equation*}
			\Hi^\pm_{\mathrm{ads}}(H)=\Hi_{\mathrm{p}}^\pm(H).
			\end{equation*}
		\end{proposition}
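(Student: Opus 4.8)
The plan is to obtain the statement as a direct application of Theorem \ref{thm:caracterisation_etat_disparaissent_�_infini}, so the whole task reduces to producing a factorization $V = CWC$ of the required type and checking that Hypotheses \ref{hyp:principe_absorption_limite}--\ref{hyp:singularit�s_spectrales} hold for $H = -\Delta + V$. First I would fix $s > 1/2$ and take $C$ to be the multiplication operator by $\langle x\rangle^{-s}$. Because $V \in L^\infty_{\mathrm{c}}(\mathbb{R}^3)$, the multiplication operator $W := \langle x\rangle^{2s} V$ has a bounded, compactly supported symbol, hence $W \in \mathcal{L}(\Hi)$ and $V = CWC$; moreover $C$ is strictly positive with trivial kernel, so it is a metric operator, and $C(-\Delta + 1)^{-1} = \langle x\rangle^{-s}\langle D\rangle^{-2}$ is compact, so $C$ is relatively compact with respect to $H_0 = -\Delta$. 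This places $H$ within the abstract framework of Section \ref{sec:setting}.

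Next I would verify the three hypotheses. Hypothesis \ref{hyp:principe_absorption_limite} is immediate from the limiting absorption principle for the free Laplacian recalled in \eqref{eq:LAP-Delta}, which for every $s > 1/2$ gives the uniform bound $\sup_{z \in \C^\pm} \| \langle x\rangle^{-s} \Res_0(z) \langle x\rangle^{-s} \|_{\mathcal{L}(\Hi)} < \infty$, that is \eqref{eq:LAP_H0} for the chosen $C$. Hypothesis \ref{hyp:valeurs_propres_H} follows from the known fact that a bounded, compactly supported complex potential produces only finitely many eigenvalues of finite algebraic multiplicity, see \cite{FrLaSa16_01}.

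The substance of the argument, and the step I expect to be the main obstacle, is Hypothesis \ref{hyp:singularit�s_spectrales}. Here I would invoke the theory of resonances: for $V \in L^\infty_{\mathrm{c}}$ the weighted resolvent admits the meromorphic continuation \eqref{eq:mero_ext} in the variable $z$ with energy $z^2$, and the task is to transfer this meromorphic structure to the boundary values $C\Res_H(\lambda \pm i\varepsilon)CW$ on $\sigma_{\mathrm{ess}}(H)$. Two points must be established. First, a point $\lambda_0^2 \ge 0$ is an outgoing/incoming spectral singularity in the sense of Definition \ref{def:point_spectral_regulier_classique_pour_H} exactly when $\pm\lambda_0$ is a real resonance; since real resonances are isolated and confined, together with the finitely many eigenvalues, to a bounded set, this yields finitely many spectral singularities $\{\lambda_1,\dots,\lambda_n\}$. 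Second, near each $\lambda_j$ the weighted resolvent blows up at most polynomially with exponent $\nu_j$ equal to the order of the corresponding resonance, while the decay of $\langle x\rangle^{-s}\Res_0(z)\langle x\rangle^{-s}$ as $|z| \to \infty$ combined with the compact support of $V$ rules out any singularity at infinity, so $\nu_\infty = 0$. Together these give precisely the uniform estimate \eqref{eq:limit_uniform}. For the meromorphic continuation and the counting of resonances I would rely on \cite{DyZw19_01}, and for the precise dictionary between real resonances and spectral singularities on \cite[Section 6]{FaFr18_01}.

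With Hypotheses \ref{hyp:principe_absorption_limite}--\ref{hyp:singularit�s_spectrales} verified, Theorem \ref{thm:caracterisation_etat_disparaissent_�_infini} applies and yields $\Hi^\pm_{\mathrm{ads}}(H) = \Hi_{\mathrm{p}}^\pm(H)$, which is the assertion.
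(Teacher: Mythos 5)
Your proposal is correct and is essentially the paper's own argument: the paper likewise takes $C(x)=\langle x\rangle^{-s}$ with $s>1/2$, checks the limiting absorption principle for the free Laplacian, the finiteness of the eigenvalues via \cite{FrLaSa16_01}, and the finiteness and finite polynomial order of the spectral singularities via the resonance theory of \cite{DyZw19_01} (real resonances $\pm\sqrt{\lambda_j}$ corresponding to spectral singularities $\lambda_j$, with $\nu_j$ the resonance multiplicities and $\nu_\infty=0$), and then applies the abstract characterization theorem for asymptotically disappearing states. The extra details you supply — the explicit factorization $V=CWC$ with $W=\langle x\rangle^{2s}V$ and the compactness of $\langle x\rangle^{-s}(-\Delta+1)^{-1}$ — are consistent with what the paper leaves implicit, so there is no divergence in method.
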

		
		To apply Theorem \ref{thm:caracterisation_espace_absolument_continu}, we need to verify in addition that Hypothesis \ref{hyp:Conjugate_operator} holds. Clearly, we can take the conjugation operator $J$ as the complex conjugation. We then obtain
		\begin{proposition}\label{prop:schr_H-ac}
			Suppose that $V$ is a complex-valued potential such that $V \in L^\infty_{ \mathrm{c} }( \mathbb{R}^3 )$. Assume that, for all embedded eigenvalues $\lambda\in[0,\infty)$, the symmetric bilinear form
		\begin{equation}\label{eq:non-deg-Schr}
			\Ker\big((H-\lambda)^{\mathrm{m}_\lambda}\big) \ni (u,v)\mapsto \int_{\mathbb{R}^3}u(x)v(x)\mathrm{d}x \quad\text{is non-degenerate}.
		\end{equation}
		Then
		 	\begin{equation}\label{eq:Hac-Schr}
		 		\Hi_\mathrm{ac}(H)=\Hi_\mathrm{p}(H^*)^\perp.
		 	\end{equation}
		\end{proposition}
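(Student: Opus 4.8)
The plan is to deduce Proposition~\ref{prop:schr_H-ac} directly from the abstract Theorem~\ref{thm:caracterisation_espace_absolument_continu}, by checking that $H=-\Delta+V$, with $V\in L^\infty_{\mathrm c}(\R^3)$ and $C$ the multiplication operator by $\langle x\rangle^{-s}$ (for a fixed $s>1/2$), satisfies Hypotheses~\ref{hyp:principe_absorption_limite}--\ref{hyp:Conjugate_operator}. The first three of these have already been verified in the discussion preceding Proposition~\ref{prop:schr_ads}: Hypothesis~\ref{hyp:principe_absorption_limite} follows from the limiting absorption principle for $-\Delta$ with weight $\langle x\rangle^{-s}$, Hypothesis~\ref{hyp:valeurs_propres_H} from the finiteness of the point spectrum for compactly supported potentials, and the hypothesis on spectral singularities from the theory of resonances (with $\nu_\infty=0$ and $\nu_j$ the multiplicity of the resonance $\pm\sqrt{\lambda_j}$). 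Hence the only remaining task is to exhibit a conjugation operator $J$ as in Hypothesis~\ref{hyp:Conjugate_operator}, and this is where essentially all the content lies.

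I would take $J$ to be complex conjugation, $(J\varphi)(x)=\overline{\varphi(x)}$, and verify clauses (i)--(iv) one by one. Clause~(i), $J^2=\Id$, is immediate, and clause~(ii), $\scal{Ju}{Jv}_\Hi=\scal{v}{u}_\Hi$, reduces to the elementary identity $\int_{\R^3}\overline{\overline{u}}\,\overline{v}=\int_{\R^3}\overline{v}\,u$. For clause~(iii), since $-\Delta$ has real coefficients it commutes with conjugation, $\overline{-\Delta u}=-\Delta\overline{u}$, and $\mathcal D(H_0)=H^2(\R^3)$ is stable under conjugation. For clause~(iv) I would make the factorization $V=CWC$ explicit by letting $W$ be the multiplication operator by $\langle x\rangle^{2s}V(x)$; this symbol is bounded and compactly supported, so $W\in\mathcal L(\Hi)$. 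Then $JC=CJ$ holds because $\langle x\rangle^{-s}$ is real-valued, while $JW=W^*J$ holds because $W^*$ is multiplication by $\overline{\langle x\rangle^{2s}V(x)}$, so that $JWu=\overline{\langle x\rangle^{2s}V}\,\overline{u}=W^*Ju$.

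It then remains to check the non-degeneracy condition~\eqref{eq:invertibility_mat2} for embedded eigenvalues. Here I would simply compute, for $u,v\in\Ker((H-\lambda)^{\mathrm m_\lambda})$,
\begin{equation*}
\langle Ju,v\rangle=\scal{\overline{u}}{v}_\Hi=\int_{\R^3}u(x)v(x)\,\mathrm dx,
\end{equation*}
so that the abstract bilinear form $(u,v)\mapsto\langle Ju,v\rangle$ coincides \emph{verbatim} with the form appearing in~\eqref{eq:non-deg-Schr}; its non-degeneracy is therefore precisely the assumption made in the statement. With all of Hypotheses~\ref{hyp:principe_absorption_limite}--\ref{hyp:Conjugate_operator} in force, Theorem~\ref{thm:caracterisation_espace_absolument_continu} gives $\Hi_\mathrm{ac}(H)=\Hi_\mathrm{p}(H^*)^\perp$, which is~\eqref{eq:Hac-Schr}. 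No step here is genuinely hard; the only place that calls for care is the bookkeeping in clause~(iv)---making $V=CWC$ explicit and tracking the adjoint in $JW=W^*J$---together with the identification of the abstract non-degeneracy form with~\eqref{eq:non-deg-Schr}.
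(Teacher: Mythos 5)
Your proposal is correct and is essentially the paper's own argument: the paper likewise deduces the proposition from Theorem \ref{thm:caracterisation_espace_absolument_continu}, noting that the first three hypotheses were already verified for $V\in L^\infty_{\mathrm{c}}(\mathbb{R}^3)$ in the discussion preceding Proposition \ref{prop:schr_ads}, and then takes $J$ to be complex conjugation, under which the abstract non-degeneracy condition \eqref{eq:invertibility_mat2} becomes exactly \eqref{eq:non-deg-Schr}. Your explicit verification of Hypothesis \ref{hyp:Conjugate_operator}(i)--(iv), including the factorization $V=CWC$ with $W$ the multiplication operator by $\langle x\rangle^{2s}V(x)$, simply fills in details the paper leaves implicit.
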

Obviously, if $H$ has no embedded eigenvalues (in particular, if $H$ is dissipative), Condition \eqref{eq:non-deg-Schr} can be dropped. Otherwise, as explained before, \eqref{eq:non-deg-Schr} seems necessary for \eqref{eq:Hac-Schr} to hold. We mention that an assumption comparable to \eqref{eq:Hac-Schr} has been used, for thresholds eigenvalues, in the recent works \cite{Aa21_01,Wa20_01} to study the large-time behaviors of solutions to Schrödinger equations with a complex potentials.
		
		\subsection{Organisation of the paper and ingredients of the proof}\label{subsec:ideas}	
		
		The proof of Theorem \ref{thm:spec_sing} (given in Section \ref{sec:proof_thm_spec_sing}) relies, in particular, on a `boundary value version' of the \textit{Birmann-Schwinger principle} that we state and prove in Section \ref{sec:Birmann}, see Proposition \ref{lm:equivalence_point_spectral_regulier}. This proposition extends a related result for dissipative operators proven in \cite[Lemma 4.1]{FaNi18_01}. Further results concerning spectral singularities are proven in Section \ref{sec:spectr_sing}.

		As mentioned in the previous section, the main ingredient in the proofs of Theorems \ref{thm:caracterisation_etat_disparaissent_à_infini} and \ref{thm:caracterisation_espace_absolument_continu} is a \emph{spectral decomposition formula} suitably modified to take into account the spectral singularities $\lbrace \lambda_j\rbrace_{j=1}^n$ of $H$. It can be stated as follows. Assuming Hypothesis \ref{hyp:singularités_spectrales}, with $\nu_1,\dots,\nu_n,\nu_\infty$ defined by this hypothesis, we  set, for all $z\in\C\setminus\{z_0\}$,
\begin{equation}\label{eq:def_r}
r(z) := (z-z_0)^{-(\nu_1+\cdots+\nu_n+\nu_\infty)} \prod_{j=1}^n (z-\lambda_j)^{\nu_j},
\end{equation}
where we recall that $z_0\in\rho(H)$, $z_0\in\C\setminus\R$. We then write
\begin{equation*}
r(H) = \Res_H(z_0)^{-(\nu_1+\cdots+\nu_n+\nu_\infty)} \prod_{j=1}^n (H-\lambda_j)^{\nu_j},
\end{equation*}
which defines a bounded operator in $\mathcal{L}(\Hi)$. Note that if $\lambda_j$ is an embedded eigenvalue of $H$, then for any generalized eigenstate $\varphi_j$ corresponding to $\lambda_j$, we have $r(H)\varphi_j=0$ (provided that $\nu_j$ is large enough). We will prove the following proposition.

\begin{proposition}\label{prop:resolution}
Suppose that Hypotheses \ref{hyp:principe_absorption_limite}, \ref{hyp:valeurs_propres_H} and \ref{hyp:singularités_spectrales} hold. Then
\begin{align}
r(H) = r(H)\Pi_{\mathrm{disc}}(H) + \wlim_{\varepsilon\rightarrow 0^+}\frac{1}{2\pi i}\int_{\sigma_{\mathrm{ess}}(H)}r(\lambda)\big(\Res_{H}(\lambda+i\varepsilon)-\Res_{H}(\lambda-i\varepsilon)\big)\mathrm{d}\lambda . \label{eq:resolution}
\end{align}
\end{proposition}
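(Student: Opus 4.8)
The plan is to realize both sides of \eqref{eq:resolution} as one and the same Dunford-type contour integral of $r(z)\Res_H(z)$, and then to let the contour collapse onto the essential spectrum. By Hypothesis \ref{hyp:valeurs_propres_H}, $\Pi_{\mathrm{disc}}(H)=\sum_\lambda\Pi_\lambda(H)$ is a finite sum of Riesz projections, hence a bounded projection commuting with $H$, so it suffices to prove
\[
r(H)\big(\Id-\Pi_{\mathrm{disc}}(H)\big)=\wlim_{\varepsilon\to0^+}\frac{1}{2\pi i}\int_{\sigma_{\mathrm{ess}}(H)}r(\lambda)\big(\Res_H(\lambda+i\varepsilon)-\Res_H(\lambda-i\varepsilon)\big)\,\mathrm{d}\lambda .
\]
To make sense of the functional calculus for the unbounded $H$, I would first note that the substitution $w=(z-z_0)^{-1}$ turns $r$ into a polynomial: one checks that $r(z_0+w^{-1})=w^{\nu_\infty}\prod_{j=1}^n\big(1+w(z_0-\lambda_j)\big)^{\nu_j}=:\tilde r(w)$, so that $r(H)=\tilde r\big(\Res_H(z_0)\big)$ is a polynomial in the bounded operator $\Res_H(z_0)=(H-z_0)^{-1}$. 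This confirms that $r(H)\in\mathcal{L}(\Hi)$ and reduces every contour manipulation below to the bounded functional calculus of $\Res_H(z_0)$ over a compact contour in the $w$-plane.

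Transporting the compact $w$-contour back through $w=(z-z_0)^{-1}$, I obtain a representation
\[
r(H)=-\frac{1}{2\pi i}\oint_{\Gamma}r(z)\Res_H(z)\,\mathrm{d}z ,
\]
for a contour $\Gamma$ surrounding $\sigma(H)$ and leaving $z_0$ outside (the sign reflects $\Res_H(z)=(H-z)^{-1}=-(z-H)^{-1}$). Since the finitely many discrete eigenvalues are isolated and, by Hypothesis \ref{hyp:valeurs_propres_H}, cannot accumulate at the real axis, I would deform $\Gamma$ into finitely many small positively oriented circles $\gamma_k$ around the eigenvalues $\lambda^{(k)}\in\sigma_{\mathrm{disc}}(H)$ together with a contour $\Gamma_{\mathrm{ess}}$ enclosing only $\sigma_{\mathrm{ess}}(H)$. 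Each circle contributes $-\frac{1}{2\pi i}\oint_{\gamma_k}r(z)\Res_H(z)\,\mathrm{d}z=r(H)\Pi_{\lambda^{(k)}}(H)$, whose sum is $r(H)\Pi_{\mathrm{disc}}(H)$; hence
\[
r(H)\big(\Id-\Pi_{\mathrm{disc}}(H)\big)=-\frac{1}{2\pi i}\oint_{\Gamma_{\mathrm{ess}}}r(z)\Res_H(z)\,\mathrm{d}z .
\]
Embedded eigenvalues, being non-isolated, are not encircled here; they lie on $\Gamma_{\mathrm{ess}}$ and are harmless because $r$ vanishes at each spectral singularity, so that $r(H)\Pi_{\mathrm{emb}}(H)=0$.

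Next I would let $\Gamma_{\mathrm{ess}}$ collapse onto the real axis. Choosing it to consist of the two horizontal rays $\{\lambda\pm i\varepsilon\}$ over $\sigma_{\mathrm{ess}}(H)$ (oriented oppositely) together with small caps near the threshold, near each $\lambda_j$, and near infinity, the horizontal parts produce $\frac{1}{2\pi i}\int_{\sigma_{\mathrm{ess}}(H)}\big(r(\lambda+i\varepsilon)\Res_H(\lambda+i\varepsilon)-r(\lambda-i\varepsilon)\Res_H(\lambda-i\varepsilon)\big)\mathrm{d}\lambda$; replacing $r(\lambda\pm i\varepsilon)$ by $r(\lambda)$ introduces only an error of size $\bigO{\varepsilon}$ in the paired integrals, so in the weak limit this matches the right-hand side of \eqref{eq:resolution}. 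Because the displayed operator is independent of $\varepsilon$ while the caps will be shown to vanish, the horizontal integrals must converge (weakly) to $r(H)(\Id-\Pi_{\mathrm{disc}}(H))$. To justify the limit I would pair with $u,v\in\Hi$ and insert the symmetrized second resolvent identity
\[
\Res_H(z)=\Res_0(z)-\Res_0(z)CWC\Res_0(z)+\Res_0(z)CW\big(C\Res_H(z)C\big)WC\Res_0(z) .
\]
The pure-$\Res_0$ term is handled by the spectral theorem for the self-adjoint $H_0$ (a Poisson-kernel regularization converging strongly to $r(H_0)$), the outer factors $\Res_0(\lambda\pm i\varepsilon)C$ have $L^2$ boundary values in $\lambda$ by Hypothesis \ref{hyp:principe_absorption_limite} and the Kato-smoothness bound \eqref{eq:Kato_smooth}, and the middle factor is tamed by $r(z)C\Res_H(z)CW$ being uniformly bounded up to the real axis by Hypothesis \ref{hyp:singularit�s_spectrales}. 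Cauchy--Schwarz then dominates the paired integrands by an $\varepsilon$-independent $L^1$ function, and dominated convergence yields the weak limit.

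The main obstacle is the treatment of the caps together with the fact that only a weak limit can be expected. Near each spectral singularity $\lambda_j$ the resolvent $\Res_H(z)$ genuinely blows up, and it is precisely the zero of order $\nu_j$ of $r$ at $\lambda_j$, combined with the polynomial bound of Hypothesis \ref{hyp:singularit�s_spectrales}, that keeps the associated cap integrals bounded and makes them vanish as their radius shrinks; near infinity the decay $r(\lambda)=\bigO{|\lambda|^{-\nu_\infty}}$ together with the $L^2$ control from \eqref{eq:Kato_smooth} forces the cap at infinity to vanish. A norm limit is out of reach—and this is the delicate point—because $\Res_H(\lambda\pm i\varepsilon)$ need not converge in $\mathcal{L}(\Hi)$ as $\varepsilon\to0^+$: only the weighted, $r$-regularized quantity $r(z)C\Res_H(z)CW$ is controlled, so convergence can be obtained solely after testing against vectors and exploiting the $L^2$-integrability furnished by Kato smoothness.
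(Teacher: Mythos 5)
Your outline does follow the paper's in broad strokes: a Dunford-type contour representation of $r(H)$, deformation into circles around the discrete eigenvalues plus a contour hugging $\sigma_{\mathrm{ess}}(H)$, and identification of the weak limit of the horizontal pieces via the second resolvent identity, Kato smoothness (Hypothesis \ref{hyp:principe_absorption_limite}) and the uniform bound \eqref{eq:limit_uniform}. But two of your steps are genuinely gapped, and they are exactly the two points the paper's proof is engineered around.

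First, the point at infinity. Since $H$ is unbounded, $0\in\sigma(\Res_H(z_0))$, and transporting your compact $w$-contour through $w=(z-z_0)^{-1}$ does \emph{not} yield $r(H)=-\frac{1}{2\pi i}\oint_\Gamma r(z)\Res_H(z)\,\mathrm{d}z$. Writing $A=\Res_H(z_0)$, one has
\begin{equation*}
(w-A)^{-1}=\frac{1}{w}+\frac{1}{w^{2}}\,\Res_H\Big(z_0+\frac{1}{w}\Big),
\end{equation*}
and the $\tfrac1w$ term contributes an extra summand $r(\infty)\,\Id$ (the value at $w=0$ of your polynomial) to the transported integral. If $\nu_\infty\ge1$ this vanishes, but the spectral-singularities hypothesis allows $\nu_\infty=0$ — which is precisely the case in the paper's application to compactly supported potentials — and then $r(\infty)=1$, so your representation is off by $\Id$. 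Correspondingly, your claim that the cap at infinity vanishes because $r(\lambda)=\bigO{|\lambda|^{-\nu_\infty}}$ is empty when $\nu_\infty=0$: on a large arc $\Res_H(z)\approx -z^{-1}\Id$, so the arc contributes $\approx r(\infty)\,\Id$, not $0$. These two omissions cancel, which is why you land on \eqref{eq:resolution}, but neither step is proved. The paper avoids this entirely by running the whole contour argument for $\tilde r(z)=(z-z_0)^{-1}r(z)$ (see \eqref{eq:def_tilde_r}), which vanishes at infinity, so that the Riesz--Dunford formula on the Riemann sphere has no correction term and the arc at radius $\varepsilon^{-3}$ is $\bigO{\varepsilon}$ by Lemma \ref{lm:res_bound}; the function $r$ is recovered only at the very end by applying $(H-z_0)$, via \eqref{eq:Projection_spectral_calcul_fonctionnel_resolvent_regul}.

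Second, the caps at the spectral singularities. The bound \eqref{eq:limit_uniform} controls only the weighted resolvent $C\Res_H(z)CW$, while your cap integrand is the unweighted $r(z)\Res_H(z)$. Expanding by the resolvent identity, the dangerous term is $\Res_0(z)CWC\Res_H(z)CWC\Res_0(z)$: after multiplication by $r(z)$, the zero of $r$ at $\lambda_j$ is consumed by the blow-up of $C\Res_H(z)CW$, and what remains still carries the two outer factors $\Res_0(z)C$ and $C\Res_0(z)$, each only of size $\bigO{|\mathrm{Im}(z)|^{-1/2}}$ (Lemma \ref{lm:tech_est}). Hence the best available bound on the cap is $\|r(z)\Res_H(z)\|_{\mathcal{L}(\Hi)}\lesssim|\mathrm{Im}(z)|^{-1}$, and $\int_{\mathrm{cap}}|\mathrm{Im}(z)|^{-1}\,|\mathrm{d}z|$ is of order $1$ (a logarithm of the cap's aspect ratio), not $o(1)$; no weak-convergence argument is offered in its place. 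The fix is the paper's: do not detour around the $\lambda_j$ at all. Run the horizontal lines straight across the singularities and treat the weak limit of the \emph{full} horizontal integrals with the regularized functional calculus (Propositions \ref{prop:Projection_Spectrale_Stone_regul} and \ref{prop:functional_calculus_reg}) — that is, exactly the pairing, resolvent-expansion, Kato-smoothness and dominated-convergence machinery you already invoke for the horizontal pieces — which works across the $\lambda_j$ because $h=\tilde r$ satisfies \eqref{eq:limit_uniform_I_sing} and \eqref{eq:h_assumpt}; no caps are then needed.
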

Proposition \ref{prop:resolution} generalizes the well-known resolution of the identity formula for self-adjoint operators to a class of non self-adjoint operators with finitely many spectral singularities. In particular, if $H$ has no spectral singularities and $\nu_\infty=0$ in Hypothesis \ref{hyp:singularités_spectrales}, then we can take $r=1$ and \eqref{eq:resolution} reduces to
\begin{align*}
\mathrm{Id} = \Pi_{\mathrm{disc}}(H) + \wlim_{\varepsilon\rightarrow 0^+}\frac{1}{2\pi i}\int_{\sigma_{\mathrm{ess}}(H)}\big(\Res_{H}(\lambda+i\varepsilon)-\Res_{H}(\lambda-i\varepsilon)\big)\mathrm{d}\lambda ,
\end{align*}		
which corresponds to Stone's formula in the particular case where $H$ is self-adjoint.

Equation \eqref{eq:resolution} is also related to the notion of spectral projections for non-self-adjoint operators \cite{Du58_01,Sc60_01,DuSc71_01}, defined by
\begin{equation}\label{eq:spectr_proj_I}
\mathds{1}_I(H) := \wlim_{\varepsilon\to0^+} \frac{1}{ 2 i \pi } \int_I \big ( \Res_H ( \lambda + i \varepsilon ) - \Res_H ( \lambda - i \varepsilon ) \big ) \mathrm{d} \lambda , 
\end{equation}
where $I \subset \sigma_{\mathrm{ess}}(H)$ is a closed interval without spectral singularities. We mention that such spectral projections were used in a stationary approach to non-unitary scattering theory, for differential operators in \cite{Mo67_01,Mo68_01}, and in an abstract setting in \cite{Go70_01,Go71_01,Hu71_01}.

We will recall in Section \ref{sec:spectr_res} that the spectral projections \eqref{eq:spectr_proj_I} are well-defined on intervals without spectral singularities, and show that they induce a bounded Borel functional calculus. In intervals containing spectral singularities, we will construct a `regularized' functional calculus, which in turn allows us to prove Proposition \ref{prop:resolution}. Based on the latter, the proofs of Theorems \ref{thm:caracterisation_etat_disparaissent_à_infini} and \ref{thm:caracterisation_espace_absolument_continu} are given in Sections \ref{subsec:ads} and \ref{subsec:abs_cont}, respectively.

Some extensions of results already appearing in the literature are collected in appendices.
		
		
\section{Spectral singularities}\label{sec:spectr_sing}

In this section we prove various characterizations of our definition of spectral singularities (see Definition \ref{def:point_spectral_regulier_classique_pour_H}). We will consider an arbitrary $\lambda\in\sigma_{\mathrm{ess}}(H)$. Our main assumption will be that the limits
		\begin{equation}\label{eq:LAP_lambda}
			C\Res_0(\lambda\pm i0^+)C:=\lim_{\varepsilon\rightarrow 0^+}C\Res_0(\lambda \pm i\varepsilon)C.
		\end{equation}
exist in the norm topology of $\mathcal{L}(\Hi)$.

We begin in Section \ref{sec:Birmann} with a characterization of spectral singularities analogous to the Birmann-Schwinger principle for eigenvalues. Next we prove Theorem \ref{thm:spec_sing} in Section \ref{sec:proof_thm_spec_sing}. In Section \ref{sec:spec_sing_adjoint}, we define the set of spectral singularities for the adjoint operator $H^*$ and show that it coincides with the set of spectral singularities of $H$. Section \ref{sec:embed} proves that eigenvalues embedded in the essential spectrum of $H$ can be seen as particular spectral singularities. In Section \ref{subsec:local}, assuming that \eqref{eq:LAP_lambda} is regular with respect to $\lambda$ in a suitable sense, we show that the notion of spectral regularity introduced in Definition \ref{def:point_spectral_regulier_classique_pour_H} is a local property. We also consider the special case of spectral singularities located at thresholds of the essential spectrum, and show that in this case outgoing and incoming spectral singularities coincide.  Finally, Section \ref{sec:dissipative case} is devoted to the particular case where $H$ is a dissipative operator.

\subsection{Birmann-Schwinger principle for spectral singularities}\label{sec:Birmann}

		Assuming that $H_0$ satisfies a limiting absorption principle at $\lambda$, as stated in \eqref{eq:LAP_lambda}, we have the following characterizations of the definition of a regular spectral point.  Item \ref{lm:equivalence_point_spectral_regulier_def_inversible} can be seen as a `boundary value' version of the Birmann-Schwinger principle (see e.g. \cite{GeLaMiZi,BeELGE21_01} and references therein). The proof of the next proposition is a quite straightforward extension to that of \cite[Lemma 4.1]{FaNi18_01}, where the result is proven for dissipative operators. It is therefore deferred to Appendix \ref{App:Propriétés_point_spect_reg}.

		\begin{proposition}\label{lm:equivalence_point_spectral_regulier}
			Let $\lambda\in \sigma_\mathrm{ess}(H)$ and suppose that the limits \eqref{eq:LAP_lambda} exist in the norm topology of $\mathcal{L}(\Hi)$. Then the following conditions are equivalent:
			\begin{enumerate}[label=(\roman*)]
				\item\label{lm:equivalence_point_spectral_regulier_def_limite} $\lambda$ is an outgoing/incoming regular spectral point of $H$,
				\item\label{lm:equivalence_point_spectral_regulier_def_inversible} $\Id+C\Res_0(\lambda\pm i0^+)CW$ is invertible in $\mathcal{L}(\Hi)$,
				\item\label{lm:equivalence_point_spectral_regulier_def_inversible H_C} $\mathrm{Id}+\Res_0(\lambda\pm i0^+)V'$ is invertible $\mathcal{L}(\Hi_C')$. 
			\end{enumerate}
		\end{proposition}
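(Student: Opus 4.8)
The plan is to prove the chain of equivalences by first recording the algebraic identities that tie $C\Res_H(z)CW$ to the Birmann--Schwinger operator $\Id+C\Res_0(z)CW$, and then passing to the boundary along $z=\lambda\pm i\varepsilon\to\lambda$. Throughout I abbreviate $K(z):=C\Res_0(z)C$ and $K_0:=C\Res_0(\lambda\pm i0^+)C$ (the latter existing by hypothesis, the two signs being treated identically). At the outset I record that, since $C$ is relatively compact with respect to $H_0$, the operator $C\Res_0(z)$ is compact, whence $K(z)$ and its norm-limit $K_0$ are compact; consequently $\Id+K(z)W$ and $\Id+K_0W$ are Fredholm of index $0$.

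The starting point is the second resolvent identity $\Res_H(z)=\Res_0(z)-\Res_0(z)CWC\Res_H(z)$, valid for $z\in\rho(H)\cap\rho(H_0)$. Sandwiching it between two copies of $C$ and multiplying by $W$ on the right yields, for such $z$,
\begin{equation*}
(\Id+K(z)W)\,C\Res_H(z)CW=K(z)W .
\end{equation*}
I would also record the elementary Birmann--Schwinger correspondence: for $z\in\C^\pm$ one has $z\in\sigma_{\mathrm{p}}(H)$ if and only if $\Ker(\Id+K(z)W)\neq\{0\}$ (the map $u\mapsto Cu$ sends $\Ker(H-z)$ into $\Ker(\Id+K(z)W)$, and $\psi\mapsto-\Res_0(z)CW\psi$ goes back), so that by the Fredholm alternative $z\notin\sigma_{\mathrm{p}}(H)$ precisely when $\Id+K(z)W$ is invertible.

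For $\ref{lm:equivalence_point_spectral_regulier_def_inversible}\Rightarrow\ref{lm:equivalence_point_spectral_regulier_def_limite}$, if $\Id+K_0W$ is invertible then, since $K(z)\to K_0$ in $\mathcal{L}(\Hi)$ as $\varepsilon\to0^+$, continuity of inversion makes $\Id+K(z)W$ invertible for all small $\varepsilon>0$; by the correspondence above no eigenvalues then lie on the ray $\lambda\pm i(0,\varepsilon_0)$, so $\lambda$ is not an accumulation point of eigenvalues there and $z\in\rho(H)$. Solving the displayed identity gives $C\Res_H(z)CW=(\Id+K(z)W)^{-1}K(z)W$, and letting $\varepsilon\to0^+$ (using $(\Id+K(z)W)^{-1}\to(\Id+K_0W)^{-1}$ in norm) shows the defining limit exists. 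For the converse $\ref{lm:equivalence_point_spectral_regulier_def_limite}\Rightarrow\ref{lm:equivalence_point_spectral_regulier_def_inversible}$, regularity guarantees $z=\lambda\pm i\varepsilon\in\rho(H)$ for small $\varepsilon$, so the displayed identity holds; letting $\varepsilon\to0^+$ and using that $C\Res_H(z)CW$ converges to some $L\in\mathcal{L}(\Hi)$ yields $(\Id+K_0W)L=K_0W$, i.e. $(\Id+K_0W)(\Id-L)=\Id$. Thus $\Id-L$ is a right inverse of $\Id+K_0W$; since the latter is Fredholm of index $0$, surjectivity forces invertibility.

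Finally, for $\ref{lm:equivalence_point_spectral_regulier_def_inversible}\Leftrightarrow\ref{lm:equivalence_point_spectral_regulier_def_inversible H_C}$ I would transport the problem from $\Hi_C'$ to $\Hi$ by conjugating with the isomorphism $C':\Hi_C'\to\Hi$ (recall $C'^{-1}=(C^{-1})'$). Using $V'=CWC'$ one computes $C'(\Id+\Res_0(\lambda\pm i0^+)V')C'^{-1}=\Id+C'\Res_0(\lambda\pm i0^+)CW$, and a short check---valid for $\varepsilon>0$ because $C'$ extends $C$ and $\Res_0(\lambda\pm i\varepsilon)C\psi\in\Hi$, then passing to the limit using convergence of $\Res_0(\lambda\pm i\varepsilon)$ in $\mathcal{L}(\Hi_C,\Hi_C')$---shows $C'\Res_0(\lambda\pm i0^+)C=K_0$. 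Hence the conjugate of $\Id+\Res_0(\lambda\pm i0^+)V'$ is exactly $\Id+K_0W$, and invertibility in $\mathcal{L}(\Hi_C')$ is equivalent to invertibility in $\mathcal{L}(\Hi)$. I expect the main obstacle to be the delicate bookkeeping in this last step: one must track carefully which space each operator acts on within the Gelfand triple and justify $C'\Res_0(\lambda\pm i0^+)C=K_0$ at the boundary, where $\Res_0(\lambda\pm i0^+)$ maps only $\Hi_C\to\Hi_C'$; by contrast, the analytic heart of the argument is the clean Fredholm-plus-one-sided-inverse trick in $\ref{lm:equivalence_point_spectral_regulier_def_limite}\Rightarrow\ref{lm:equivalence_point_spectral_regulier_def_inversible}$.
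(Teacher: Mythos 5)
Your proof is correct. For the equivalence between (i) and (ii) it follows essentially the same route as the paper's appendix: the second resolvent identity sandwiched between two copies of $C$, compactness of $C\Res_0(z)CW$ inherited from the relative compactness of $C$, the Fredholm alternative to upgrade the one-sided inverse obtained in the limit $\varepsilon\to0^+$ to genuine invertibility, and norm-continuity of inversion to go in the other direction. The only cosmetic differences are that you rule out eigenvalues on the ray $\lambda\pm i(0,\varepsilon_0)$ via an explicit Birman--Schwinger correspondence where the paper argues by contradiction with the identity $(\Id+C\Res_0(\lambda\pm i\varepsilon_n)CW)Cu_n=0$, and that your algebraic identity carries $\Id+C\Res_0(z)CW$ on the left (producing a right inverse) where the paper's carries it on the right (producing a left inverse); both are closed by the same index-zero argument.

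Where your proposal genuinely adds something: the paper's written proof stops at (i)$\Leftrightarrow$(ii) and never addresses (iii). Your conjugation argument supplies exactly the missing step, and it is sound. Indeed, $C':\Hi_C'\to\Hi$ is an isomorphism (the paper records $C'^{-1}=(C^{-1})'\in\mathcal{L}(\Hi,\Hi_C')$), $V'C'^{-1}=CW$ since $V'=CWC'$, and the boundary identification $C'\Res_0(\lambda\pm i0^+)C=C\Res_0(\lambda\pm i0^+)C$ holds because for $\varepsilon>0$ one has $\Res_0(\lambda\pm i\varepsilon)Cu\in\Hi$, where $C'$ acts as $C$, and the hypothesis that $C\Res_0(\lambda\pm i\varepsilon)C$ converges in $\mathcal{L}(\Hi)$ is the same as convergence of $\Res_0(\lambda\pm i\varepsilon)$ in $\mathcal{L}(\Hi_C,\Hi_C')$, the two limits being intertwined by $C'$. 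Since invertibility is preserved under conjugation by a Banach-space isomorphism, (ii)$\Leftrightarrow$(iii) follows.
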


\begin{proof}
See Appendix \ref{App:Propriétés_point_spect_reg}.
\end{proof}

It should be noted that, since $C$ is relatively compact with respect to $H_0$, the operator $C\Res_0(\lambda\pm i\varepsilon)CW$ is compact in $\mathcal{L}(\Hi)$, for all $\varepsilon>0$. Hence $C\Res_0(\lambda\pm i0^+)CW$ is also compact in $\mathcal{L}(\Hi)$. By the Fredholm alternative, \ref{lm:equivalence_point_spectral_regulier_def_inversible} is then equivalent to $\mathrm{Ker}(\Id+C\Res_0(\lambda\pm i0^+)CW)=\{0\}$. Likewise, \ref{lm:equivalence_point_spectral_regulier_def_inversible H_C} is equivalent to $\mathrm{Ker}(\mathrm{Id}+\Res_0(\lambda\pm i0^+)V')=\{0\}$ since $\Res_0(\lambda\pm i0^+)V'$ is compact in $\mathcal{L}(\Hi'_C)$. 

	\subsection{Proof of Theorem \ref{thm:spec_sing}} \label{sec:proof_thm_spec_sing}

	Now we turn to the proof of Theorem \ref{thm:spec_sing}, which characterizes outgoing/incoming spectral singularities as eigenvalues of the extended operator $H'$ corresponding to eigenvectors belonging to the space $\Hi_C^\pm(\lambda)$ of outgoing/incoming resonant states.
	
	   Before proving Theorem \ref{thm:spec_sing}, we need two preliminary lemmas. The first one is the following well-known estimate of the operator norm $\|\mathcal{R}_0(\lambda\pm i\varepsilon)C\|$, assuming that the limits \eqref{eq:LAP_lambda} exist.

	\begin{lemma}\label{lm:tech_est}
					Let $\lambda\in \sigma_\mathrm{ess}(H)$ and suppose that the limits \eqref{eq:LAP_lambda} exist in the norm topology of $\mathcal{L}(\Hi)$. There exists $c_0>0$ such that 
		\begin{equation}\label{eq:Estimé_norme_C_resolvante_H_0}
			\forall \varepsilon>0, \quad \norme{\Res_0\left(\lambda\pm i\varepsilon\right)C}_\mathcal{\mathcal{L}(\Hi)}\leq c_0\varepsilon^{-\frac12}.
		\end{equation}
	\end{lemma}
	
	\begin{proof}
	See Appendix \ref{App:Propriétés_point_spect_reg}.
	\end{proof} 
	
	Next we show that, under our assumptions, $\Res_0(\lambda\pm i0^+)$ are right inverses of $H_0'-\lambda$.
	
		\begin{lemma}\label{lm:Inverse_à_droite}
			Suppose that Hypothesis \ref{hyp:domaine_H_0_C} holds. Let $\lambda\in\sigma_\mathrm{ess}(H)$ and suppose that the limits \eqref{eq:LAP_lambda} exist in the norm topology of $\mathcal{L}(\Hi)$. Then for all $v\in \Hi_C$, $\Res_0(\lambda\pm i0^+)v\in\mathcal{D}(H'_0)$ and
			\begin{equation}\label{eq:right_inv}
				(H_0'-\lambda)\Res_0(\lambda\pm i0^+)v = v.
			\end{equation}
		\end{lemma}
	
		\begin{proof}
			Let $v=C\varphi\in\Hi_C$. Since $\Res_0(\lambda\pm i\varepsilon)$ converges to $\Res_0(\lambda\pm i0^+)$ in $\mathcal{L}(\Hi_C,\Hi_C')$, we have, for all $u\in\mathcal{D}(H_0|_{\Hi_C})$, 
			\begin{align*}
				\big \langle (H_0-\lambda)u,\Res_0(\lambda\pm i0^+)v\big\rangle_{\Hi_C;\Hi'_C}&=\lim_{\varepsilon\rightarrow 0^+}\scal{(H_0-\lambda)u}{\Res_0(\lambda\pm i\varepsilon)C\varphi}_\Hi\\
				&=\scal{u}{C\varphi}_{\Hi}\pm\lim_{\varepsilon\rightarrow 0^+}i\varepsilon\scal{u}{\Res_0(\lambda\pm i\varepsilon)C\varphi}_\Hi.
			\end{align*}
		It follows from \eqref{eq:Estimé_norme_C_resolvante_H_0} that 
		\begin{equation*}
			\norme{\Res_0(\lambda\pm i\varepsilon)C}_\Hi=\mathcal{O}\big(\varepsilon^{-\frac12}\big), \quad \varepsilon\rightarrow 0^+.
		\end{equation*}
		Thus we obtain that, for all $u\in\mathcal{D}(H|_{\Hi_C})$,
		\begin{equation*}
			\big \langle (H_0-\lambda)u,\Res_0(\lambda\pm i0^+)v\big\rangle_{\Hi_C;\Hi'_C} = \scal{u}{v}_{\Hi}.
		\end{equation*}
		Since $|\scal{u}{v}_{\Hi}|\le\|u\|_{\Hi_C}\|v\|_{\Hi'_C}$, this shows that $\Res_0(\lambda\pm i0^+)v\in\mathcal{D}(H'_0)$ and that \eqref{eq:right_inv} holds.
		\end{proof}
	
	Now we are ready to prove Theorem \ref{thm:spec_sing}
	
		\begin{proof}[Proof of Theorem \ref{thm:spec_sing}] 
		\ref{it:spec_sing}$\Rightarrow$\ref{it:res_state} Suppose for instance that $\lambda$ is an outgoing spectral singularity. By Proposition \ref{lm:equivalence_point_spectral_regulier}, $\mathrm{Id}+\Res_0(\lambda+i0^+)V'$ is not invertible in $\mathcal{L}(\Hi'_C)$. Since $\Res_0(\lambda+i0^+)V'$  is compact in $\mathcal{L}(\Hi'_C)$, it follows from the Fredholm alternative that there exists $\Psi\in\Hi'_C$, $\Psi\neq0$,  such that 
			\begin{equation}\label{eq:preuve_caracterisation_vecteur_propre_singularité_direct_non_inversible}
				-\Res_0(\lambda+i0^+)V'\Psi=\Psi. 
			\end{equation}
			By Lemma \ref{lm:Inverse_à_droite}, this implies that $\Psi\in\mathcal{D}(H'_0)=\mathcal{D}(H')$ and that
			\begin{equation}\label{eq:preuve_caracterisation_vecteur_propre_singularité_direct_non_inversible2}
				-V'\Psi=(H'_0-\lambda)\Psi. 
			\end{equation}
			Since $H'=H'_0+V'$, this proves \ref{it:res_state}. 
			
			\ref{it:res_state}$\Rightarrow$\ref{it:spec_sing} Suppose now that $\lambda$ is an eigenvalue of $H'$ associated to an eigenvector $\Psi\in\Hi_C'^+(\lambda)$, $\Psi\neq0$. Then 
			\begin{equation*}\label{eq:preuve_thm_caractérisation_sing_et_vp_ecriture_reciproque_ecriture_vp}
				(H'-\lambda)\Psi=0 \quad \text{with}\quad \Psi=-\Res_0(\lambda+i0^+)V'\Psi.
			\end{equation*}
			In particular,
			\begin{equation*}
				(\Id+\Res_0(\lambda+i0^+)V')\Psi=0 ,
			\end{equation*}
			and hence $\Id+\Res_0(\lambda+i0^+)V'$ is not invertible in $\mathcal{L}(\Hi_C')$. By Proposition \ref{lm:equivalence_point_spectral_regulier}, this proves \ref{it:spec_sing}.
		\end{proof}

	\subsection{Spectral singularities of the adjoint operator}\label{sec:spec_sing_adjoint}

	Recall that the regular spectral points and spectral singularities of $H$ have been defined in Definition \ref{def:point_spectral_regulier_classique_pour_H}. The corresponding definition for the adjoint operator $H^*$ is the following.	
		\begin{definition}[Regular spectral point and spectral singularity for $H^*$]\label{def:point_spectral_regulier_classique_pour_H_star}
		Let $\lambda\in\sigma_\mathrm{ess}(H)$. 
	\begin{enumerate}[label=(\roman*)]
		\item We say that $\lambda$ is an outgoing/incoming regular spectral point of $H^*$ if $\lambda$ is not an accumulation point of eigenvalues located in $\lambda\mp i\left( 0,\infty\right)$ and if the limit
		\begin{equation}\label{eq:def_reg_spec_pt_H_Star}
			C\Res_{H^*}(\lambda\mp i0^+)CW^*:=\lim_{\varepsilon\rightarrow 0^+} C\Res_{H^*}(\lambda\mp i\varepsilon)CW^*
		\end{equation}
		exists in the norm topology of $\mathcal{L}(\Hi)$. If $\lambda$ is not an outgoing/incoming regular spectral point, we say that $\lambda$ is an outgoing/incoming spectral singularity of $H^*$.
		\item We say that $\lambda$ is a regular spectral point of $H^*$ if it is both an incoming and an outgoing regular spectral point of $H^*$. If $\lambda$ is not a regular spectral point, we say that $\lambda$ is a spectral singularity of $H^*$.
	\end{enumerate}
	\end{definition}	
	
The following proposition shows that, under our assumptions, $\lambda$ is an outgoing/incoming regular spectral point of $H$ if and only $\lambda$ is an outgoing/incoming regular spectral point of $H^*$. 
	
			\begin{proposition}
			Let $\lambda\in\sigma_\mathrm{ess}(H)$ and suppose that the limits \eqref{eq:LAP_lambda} exist in the norm topology of $\mathcal{L}(\Hi)$. Then the following conditions are equivalents:
			\begin{enumerate}[label=(\roman*)]
				\item\label{it:a1} $\lambda$ is a regular outgoing/incoming spectral point of $H$,
				\item\label{it:a2} $\lambda$ is not an accumulation point of eigenvalues located in $\lambda \pm i(0,\infty)$ and 
				\begin{equation*}
					\lim_{\varepsilon\rightarrow 0^+} WC\Res_H(\lambda\pm i\varepsilon)C
				\end{equation*}
				exists in the norm topology of $\mathcal{L}(\Hi)$,
				\item\label{it:a3} $\lambda$ is a regular outgoing/incoming spectral point of $H^*$.
			\end{enumerate}
		\end{proposition}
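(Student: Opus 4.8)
The plan is to reduce all three conditions to the invertibility of a single compact perturbation of the identity on $\Hi$, using the Birmann--Schwinger principle of Proposition \ref{lm:equivalence_point_spectral_regulier} applied to both $H$ and $H^*$, together with the elementary fact that $\mathrm{Id}+ST$ and $\mathrm{Id}+TS$ are simultaneously invertible. Throughout I abbreviate $A_\pm := C\Res_0(\lambda\pm i0^+)C$, which exist in $\mathcal{L}(\Hi)$ by hypothesis and are compact since $C$ is relatively compact with respect to $H_0$. The key preliminary observations are the adjoint identities. Because $H_0$ is self-adjoint, $\Res_0(\bar z)=\Res_0(z)^*$, so $C\Res_0(\bar z)C=(C\Res_0(z)C)^*$; passing to the boundary gives $A_-=A_+^*$. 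Likewise $\Res_H(z)^*=\Res_{H^*}(\bar z)$, and $(CWC)^*=CW^*C$ because $C=C^*$, so that $H^*=H_0+CW^*C$ is an operator of the same form as $H$ and the proposition applies to it as well.

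First I would treat \ref{it:a1} $\Leftrightarrow$ \ref{it:a3}. Proposition \ref{lm:equivalence_point_spectral_regulier} applied to $H$ shows that $\lambda$ is an outgoing/incoming regular spectral point of $H$ if and only if $\mathrm{Id}+A_\pm W$ is invertible in $\mathcal{L}(\Hi)$. Applying the same proposition to $H^*=H_0+CW^*C$ --- and recalling that, by Definition \ref{def:point_spectral_regulier_classique_pour_H_star}, the outgoing/incoming point of $H^*$ is governed by the opposite half-plane $\lambda\mp i0^+$ --- shows that $\lambda$ is an outgoing/incoming regular spectral point of $H^*$ if and only if $\mathrm{Id}+A_\mp W^*$ is invertible. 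Using $A_\mp=A_\pm^*$ I rewrite $A_\mp W^*=A_\pm^* W^*=(WA_\pm)^*$, whence $\mathrm{Id}+A_\mp W^*=(\mathrm{Id}+WA_\pm)^*$, which is invertible precisely when $\mathrm{Id}+WA_\pm$ is. Finally, the identity $(\mathrm{Id}+WA_\pm)^{-1}=\mathrm{Id}-W(\mathrm{Id}+A_\pm W)^{-1}A_\pm$, verified by direct multiplication (and its companion on the other side), shows that $\mathrm{Id}+A_\pm W$ is invertible if and only if $\mathrm{Id}+WA_\pm$ is. Chaining these equivalences yields \ref{it:a1} $\Leftrightarrow$ \ref{it:a3}.

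It remains to establish \ref{it:a2} $\Leftrightarrow$ \ref{it:a3}, which I would obtain directly by taking adjoints. Since adjunction is an isometric involution of $\mathcal{L}(\Hi)$, the limit $\lim_{\varepsilon\to0^+}WC\Res_H(\lambda\pm i\varepsilon)C$ exists if and only if the limit of the adjoints exists; and $\big(WC\Res_H(\lambda\pm i\varepsilon)C\big)^*=C\Res_H(\lambda\pm i\varepsilon)^*CW^*=C\Res_{H^*}(\lambda\mp i\varepsilon)CW^*$, whose limit is exactly the one appearing in \eqref{eq:def_reg_spec_pt_H_Star}. For the eigenvalue condition, $\sigma_{\mathrm p}(H^*)=\overline{\sigma_{\mathrm p}(H)}$, so the real point $\lambda$ is an accumulation point of eigenvalues of $H$ in $\lambda\pm i(0,\infty)$ if and only if it is an accumulation point of eigenvalues of $H^*$ in $\lambda\mp i(0,\infty)$. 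Matching both parts gives that \ref{it:a2} holds if and only if $\lambda$ is an outgoing/incoming regular spectral point of $H^*$, i.e. \ref{it:a3}.

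The compactness and adjoint identities are routine; the only point requiring genuine care is the bookkeeping of the outgoing/incoming conventions, namely that both the half-plane and the direction of the boundary limit flip when passing from $H$ to $H^*$, so that the $\pm$ in \ref{it:a1} is correctly matched with the $\mp$ governing $H^*$ in Definition \ref{def:point_spectral_regulier_classique_pour_H_star}. The main structural input is Proposition \ref{lm:equivalence_point_spectral_regulier}, which is what lets me avoid any direct analysis of the boundary limits of $C\Res_H(\lambda\pm i\varepsilon)CW$ and argue entirely at the level of invertibility of compact perturbations of the identity.
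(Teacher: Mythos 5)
Your proof is correct, and it organizes the argument along a genuinely different route from the paper's. The paper also disposes of \ref{it:a2}$\Leftrightarrow$\ref{it:a3} by taking adjoints, but its substantive work is the equivalence \ref{it:a1}$\Rightarrow$\ref{it:a2}: from the invertibility of $\Id+C\Res_0(\lambda\pm i0^+)CW$ (Proposition \ref{lm:equivalence_point_spectral_regulier} applied to $H$) it deduces the invertibility of $\Id+WC\Res_0(\lambda\pm i0^+)C$ via an explicit algebraic identity plus the Fredholm alternative, and then obtains the existence of $\lim_{\varepsilon\to0^+}WC\Res_H(\lambda\pm i\varepsilon)C$ from the resolvent identity $WC\Res_H(\lambda\pm i\varepsilon)C=\big(\Id+WC\Res_0(\lambda\pm i\varepsilon)C\big)^{-1}WC\Res_0(\lambda\pm i\varepsilon)C$. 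You instead close the triangle through \ref{it:a1}$\Leftrightarrow$\ref{it:a3}: you apply Proposition \ref{lm:equivalence_point_spectral_regulier} a second time, to $H^*=H_0+CW^*C$ (legitimate, since $H^*$ has the same structure with the same $C$ and the same limits \eqref{eq:LAP_lambda}), and then chain the purely algebraic facts $\Id+A_\mp W^*=(\Id+WA_\pm)^*$ and ``$\Id+WA_\pm$ invertible iff $\Id+A_\pm W$ invertible'', in your notation $A_\pm=C\Res_0(\lambda\pm i0^+)C$. What this buys is modularity: you never touch boundary values of $\Res_H$ at all, everything happens at the level of invertibility of compact perturbations of the identity. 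What the paper's route buys is the explicit formula for the limit appearing in \ref{it:a2}. Your bookkeeping of the flipped half-plane convention in Definition \ref{def:point_spectral_regulier_classique_pour_H_star} is exactly right, and that is indeed the delicate point of this approach.

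One imprecision is worth fixing in the adjoint step. The identity $\sigma_{\mathrm{p}}(H^*)=\overline{\sigma_{\mathrm{p}}(H)}$ is false for general closed operators (for the unilateral shift $S$ one has $\sigma_{\mathrm{p}}(S)=\emptyset$ while $\sigma_{\mathrm{p}}(S^*)$ is the open unit disc); an eigenvalue $\mu$ of $H$ only forces $\bar\mu$ into the spectrum of $H^*$, not into its point spectrum. What saves your argument is that the eigenvalues relevant to the accumulation conditions lie in $\lambda\pm i(0,\infty)$ and are therefore non-real: since $\sigma_{\mathrm{ess}}(H)=\sigma_{\mathrm{ess}}(H_0)\subset\R$, every non-real spectral point of $H$ (and of $H^*$) belongs to the discrete spectrum, and for isolated eigenvalues of finite algebraic multiplicity the relation $\Pi_{\bar\mu}(H^*)=\Pi_\mu(H)^*$ between Riesz projections yields the desired bijection $\mu\leftrightarrow\bar\mu$ between non-real eigenvalues of $H$ and of $H^*$. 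With the claim restricted and justified in this way, the rest of your proof stands as written.
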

	
		\begin{proof}
		Taking adjoints, it is clear that $\ref{it:a2}\Leftrightarrow\ref{it:a3}$. We prove that $\ref{it:a1}\Rightarrow\ref{it:a2}$. Suppose for instance that $\lambda$ is an outgoing regular spectral point of $H$. By Proposition \ref{lm:equivalence_point_spectral_regulier}, $\Id+C\Res_0(\lambda+ i0^+)CW$ is invertible in $\mathcal{L}(\Hi)$. We claim that $\Id+WC\Res_0(\lambda+ i0^+)C$ is invertible in $\mathcal{L}(\Hi)$. Indeed, for all $\varepsilon>0$, a direct computation gives 
			\begin{equation*}
				\Id=\left(\Id-(W(\Id+C\Res_0(\lambda+ i\varepsilon)CW)^{-1}C\Res_0(\lambda+i\varepsilon)C)\right)\left(\Id-WC\Res_0(\lambda+ i\varepsilon)C\right).
			\end{equation*}
			Letting $\varepsilon\to0^+$, we obtain that
			\begin{equation*}
					\Id=\left(\Id-(W(\Id+C\Res_0(\lambda+ i0^+)CW)^{-1}C\Res_0(\lambda+i0^+)C)\right)\left(\Id+WC\Res_0(\lambda+ i0^+)C\right).
				\end{equation*}
				Thus $\Id+WC\Res_0(\lambda+ i0^+)C$ is injective. Since $WC\Res_0(\lambda+ i0^+)C$ is compact, Fredholm's alternative implies that $\Id+WC\Res_0(\lambda+ i0^+)C$ is bijective in $\mathcal{L}(\Hi)$. 
				Now, writing 
				\begin{equation*}
					WC\Res_H(\lambda+i\varepsilon)C=(\Id+WC\Res_0(\lambda+i\varepsilon)C)^{-1}WC\Res_0(\lambda+i\varepsilon)C,
				\end{equation*}
				for $\varepsilon>0$ small enough and next letting $\varepsilon\to0^+$, we deduce that
				$\lim_{\varepsilon\rightarrow 0^+} WC\Res_H(\lambda+ i\varepsilon)C$ exists in $\mathcal{L}(\Hi)$.
				
				The proof of $\ref{it:a1}\Rightarrow\ref{it:a2}$ in the case of an incoming regular spectral point as well as the proof of $\ref{it:a2}\Rightarrow\ref{it:a1}$ are analogous.
		\end{proof}

			\subsection{Embedded eigenvalues}\label{sec:embed}

		In this section, we prove that given our definition of spectral singularities (see Definition \ref{def:point_spectral_regulier_classique_pour_H}), an eigenvalue of $H$ embedded in the essential spectrum is both an incoming and an outgoing spectral singularity.

		\begin{proposition}\label{cor:embed}
			Let $\lambda\in \sigma_\mathrm{ess}(H)$ and suppose that the limits \eqref{eq:LAP_lambda} exist in the norm topology of $\mathcal{L}(\Hi)$. If $\lambda$ is an eigenvalue of $H$, then $\lambda$ is both an outgoing and an incoming spectral singularity of $H$.
		\end{proposition}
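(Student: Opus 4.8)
The plan is to use Theorem~\ref{thm:spec_sing}: to show that $\lambda$ is both an outgoing and an incoming spectral singularity, it suffices to produce, for each sign, a nonzero eigenvector of the extension $H'$ lying in the corresponding space of resonant states $\Hi_C'^\pm(\lambda)=\Ker(\Id+\Res_0(\lambda\pm i0^+)V')$ of Definition~\ref{def:resonant}. The natural candidate is the eigenvector $u$ itself, regarded as an element of the larger space $\Hi_C'$ via the embedding $\Hi\hookrightarrow\Hi_C'$. So the whole argument reduces to checking that $u\in\Ker(\Id+\Res_0(\lambda\pm i0^+)V')$ for both choices of sign.

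First I would record the eigenvalue equation in the form
\begin{equation*}
(H_0-\lambda)u=-Vu=-CWCu,
\end{equation*}
and note that, since $u\in\Hi$, one has $C'u=Cu$ and hence $V'u=CWC'u=CWCu=Vu\in\Hi_C$. Next, for $\varepsilon>0$ I would rewrite $(H_0-\lambda\mp i\varepsilon)u=-Vu\mp i\varepsilon u$ and apply the genuine resolvent $\Res_0(\lambda\pm i\varepsilon)$ to obtain the approximate identity
\begin{equation*}
u=-\Res_0(\lambda\pm i\varepsilon)Vu\mp i\varepsilon\,\Res_0(\lambda\pm i\varepsilon)u .
\end{equation*}
The goal is then to pass to the limit $\varepsilon\to0^+$ in the topology of $\Hi_C'$. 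For the first term, I would use that convergence of $C\Res_0(\lambda\pm i\varepsilon)C$ in $\mathcal{L}(\Hi)$ is equivalent to convergence of $\Res_0(\lambda\pm i\varepsilon)C$ in $\mathcal{L}(\Hi,\Hi_C')$ (because $\|\Psi\|_{\Hi_C'}=\|C'\Psi\|_\Hi$ and $C'$ extends $C$); applying this to $WCu\in\Hi$ and writing $Vu=C(WCu)$ gives $\Res_0(\lambda\pm i\varepsilon)Vu\to\Res_0(\lambda\pm i0^+)V'u$ in $\Hi_C'$.

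The only real work is to show that the spurious term $\mp i\varepsilon\,\Res_0(\lambda\pm i\varepsilon)u$ vanishes in $\Hi_C'$, and this is exactly where Lemma~\ref{lm:tech_est} enters. Taking adjoints in the bound $\|\Res_0(\lambda\pm i\varepsilon)C\|_{\mathcal{L}(\Hi)}\le c_0\varepsilon^{-1/2}$ (and using $\lambda\in\R$) yields $\|C\Res_0(\lambda\pm i\varepsilon)\|_{\mathcal{L}(\Hi)}\le c_0\varepsilon^{-1/2}$, whence
\begin{equation*}
\big\|\varepsilon\,\Res_0(\lambda\pm i\varepsilon)u\big\|_{\Hi_C'}=\varepsilon\big\|C\Res_0(\lambda\pm i\varepsilon)u\big\|_\Hi\le c_0\,\varepsilon^{1/2}\|u\|_\Hi\xrightarrow[\varepsilon\to0^+]{}0 .
\end{equation*}
Combining the two limits gives $u=-\Res_0(\lambda\pm i0^+)V'u$, i.e. $(\Id+\Res_0(\lambda\pm i0^+)V')u=0$ with $u\neq0$, so $u\in\Hi_C'^\pm(\lambda)$. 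Since $H'$ extends $H$, $u$ is an eigenvector of $H'$ for the eigenvalue $\lambda$, and Theorem~\ref{thm:spec_sing} (or, equivalently, the resulting non-invertibility of $\Id+\Res_0(\lambda\pm i0^+)V'$ together with Proposition~\ref{lm:equivalence_point_spectral_regulier}) shows that $\lambda$ is simultaneously an outgoing and an incoming spectral singularity of $H$. The main obstacle is thus purely the control of the $\varepsilon^{1/2}$ remainder via the Kato-smoothness-type estimate; everything else is bookkeeping inside the Gelfand triple $\Hi_C\hookrightarrow\Hi\hookrightarrow\Hi_C'$.
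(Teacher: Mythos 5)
Your proof is correct, but one citation needs care: Theorem \ref{thm:spec_sing} assumes Hypothesis \ref{hyp:domaine_H_0_C} (it is needed to define the extension $H'$), whereas Proposition \ref{cor:embed} does not. So the headline route through ``$u$ is an eigenvector of $H'$'' and Theorem \ref{thm:spec_sing} proves the statement only under that extra hypothesis. Fortunately, your parenthetical fallback is exactly the right fix and should be promoted to the main argument: you show $(\Id+\Res_0(\lambda\pm i0^+)V')u=0$ in $\Hi_C'$ with $u\neq0$ (the embedding $\Hi\hookrightarrow\Hi_C'$ is injective since $C$ is), hence $\Id+\Res_0(\lambda\pm i0^+)V'$ is not invertible in $\mathcal{L}(\Hi_C')$, and Proposition \ref{lm:equivalence_point_spectral_regulier}, item (iii)$\Leftrightarrow$(i), concludes for both signs --- none of which requires $H'$ or Hypothesis \ref{hyp:domaine_H_0_C}. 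With that adjustment (and dropping the assertions about $H'$), your hypotheses match the proposition exactly, and all the analytic steps are sound: $V'u=Vu$ for $u\in\Hi$, the identification of convergence of $C\Res_0(\lambda\pm i\varepsilon)C$ in $\mathcal{L}(\Hi)$ with convergence of $\Res_0(\lambda\pm i\varepsilon)C$ in $\mathcal{L}(\Hi,\Hi_C')$, and the adjointed bound $\|C\Res_0(\lambda\pm i\varepsilon)\|_{\mathcal{L}(\Hi)}\le c_0\varepsilon^{-1/2}$ from Lemma \ref{lm:tech_est}.

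As for comparison: your argument is essentially the paper's proof transported to the dual space. The paper argues by contradiction entirely inside $\Hi$: assuming $\lambda$ is an outgoing regular spectral point, it applies $C\Res_0(\lambda+i\varepsilon)$ to $(H-\lambda)u=0$ to get
\begin{equation*}
\big(\Id+C\Res_0(\lambda+i\varepsilon)CW\big)Cu+i\varepsilon\, C\Res_0(\lambda+i\varepsilon)u=0,
\end{equation*}
kills the $i\varepsilon$ term with the same Lemma \ref{lm:tech_est} estimate you use, and contradicts the invertibility of $\Id+C\Res_0(\lambda+i0^+)CW$, i.e.\ item (ii) of Proposition \ref{lm:equivalence_point_spectral_regulier}, using $Cu\neq0$. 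Your identity is the preimage of the paper's under the isometry $C'$: applying $C'$ to $(\Id+\Res_0(\lambda\pm i0^+)V')u=0$ yields $(\Id+C\Res_0(\lambda\pm i0^+)CW)Cu=0$. So the two proofs use the same resolvent manipulation, the same key estimate, and the same Birman--Schwinger-type equivalence; the paper's version simply avoids the Gelfand-triple bookkeeping by working with the weighted operators in $\Hi$, while yours makes the connection to resonant states $\Hi_C'^\pm(\lambda)$ explicit.
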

	
		\begin{proof}
		Let $\lambda$ be an eigenvalue of $H$. There exists $u\in\mathcal{D}(H)$, $u\neq 0$, such that $(H-\lambda)u=0$.	Suppose by contradiction that $\lambda$ is an outgoing regular spectral point of $H$. Since $\lambda$ is not an accumulation point of eigenvalues located in $\lambda+i(0,\infty)$, we can write, for $\varepsilon>0$ small enough, 
		\begin{equation}\label{eq:b1}
			0=C\Res_0(\lambda+i\varepsilon)(H-\lambda)u=(\Id+C\Res_0(\lambda+i\varepsilon)CW)Cu+i\varepsilon C\Res_0(\lambda+i\varepsilon)u.
		\end{equation}
		Lemma \ref{lm:tech_est} yields
		\begin{equation*}
			\lim_{\varepsilon\rightarrow 0^+}\varepsilon C\Res_0(\lambda+i\varepsilon)u=0. 
		\end{equation*}
		Inserting this into \eqref{eq:b1}, we obtain that
		\begin{equation*}
			(\Id+C\Res_0(\lambda+i0^+)CW)Cu=0,
		\end{equation*}
		which is impossible since $\Id+C\Res_0(\lambda+i0^+)CW$ is injective by Lemma \ref{lm:equivalence_point_spectral_regulier}.
		
		Similarly, $\lambda$ cannot be an incoming regular spectral point of $H$. This concludes the proof of the proposition.
		\end{proof}

It should also be noted that, by Theorem \ref{thm:spec_sing}, the eigenvectors associated to an embedded eigenvalue $\lambda$ belong to $\Hi_C'^\pm(\lambda)$.

\subsection{Local spectral regularity}\label{subsec:local}

In this section, we show that the notion of spectral regularity introduced in Definition \ref{def:point_spectral_regulier_classique_pour_H} is a local property.

We will need to distinguish the case of a spectral singularity embedded in the essential spectrum of $H$ from the case of a `threshold spectral singularity'. Here we will say that a point $\lambda\in\sigma_{\mathrm{ess}}(H)$ is a \emph{spectral threshold} of $H$ if there exists $r>0$ such that either
		\begin{equation}\label{eq:left_thr}
			\sigma_\mathrm{ess}(H)\cap \mathring D(\lambda,r)\subset \lbrack \lambda,\infty) , 
		\end{equation}
		or
		\begin{equation}\label{eq:right_thr}
		 	\sigma_\mathrm{ess}(H)\cap \mathring D(\lambda,r)\subset (-\infty,\lambda].
		\end{equation}
We say that $\lambda$ is a \emph{left threshold} if \eqref{eq:left_thr} holds and a \emph{right threshold} if \eqref{eq:right_thr} holds.

If $\lambda$ belongs to $\sigma_{\mathrm{ess}}(H)$, we will in this section make the assumption that there exists $r>0$ such that the maps
\begin{equation}\label{eq:ext_cont_reg}
\mathring{D}(\lambda,r)\cap\C^\pm\ni z\mapsto C\Res_0(z)C
\end{equation}
extend by continuity to $\mathring{D}(\lambda,r)\cap\bar\C^\pm$. 

The next proposition shows that the notion of spectral regularity is a local property. Note that Item \ref{prop:reg_spec_local_voisi} of Proposition \ref{prop:reg_spec_local} corresponds to the definition of a regular spectral point in \cite{FaFr18_01,FaNi18_01}, in the particular case where $H$ is dissipative. The proof of the next result being a quite straightforward extension of the proof of \cite[Lemma 4.1]{FaNi18_01}, it is deferred to Appendix \ref{App:Propriétés_point_spect_reg}.
\begin{proposition}\label{prop:reg_spec_local}
		Let $\lambda\in\sigma_\mathrm{ess}(H)$. Suppose that there exists $r>0$ such that the maps \eqref{eq:ext_cont_reg} extend by continuity to $\mathring{D}(\lambda,r)\cap\bar\C^\pm$. The following conditions are equivalent:
			\begin{enumerate}[label=(\roman*)]
				\item\label{prop:reg_spec_local_fixe} $\lambda$ is an outgoing/incoming regular spectral point,
				\item\label{prop:reg_spec_local_voisi}  There exists a compact interval $K_\lambda\subset\R$ whose interior contains $\lambda$, such that $K_\lambda$ does not have any accumulation point of eigenvalues of $H$ located in $\C^\pm$, and such that the limit 
				\begin{equation*}
					C\Res_H(\mu\pm i0^+)CW:=\lim_{\varepsilon\rightarrow 0^+}C\Res_H(\mu\pm i\varepsilon)CW
				\end{equation*}
				exists uniformly in $\mu\in K_\lambda$ in the norm topology of $\mathcal{L}(\Hi)$. 
			\end{enumerate}
		\end{proposition}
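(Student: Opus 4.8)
The plan is to reduce both implications to the Birman--Schwinger characterization of Proposition~\ref{lm:equivalence_point_spectral_regulier}, combined with the factorized resolvent identity and the boundary-continuity hypothesis \eqref{eq:ext_cont_reg}. I treat the outgoing case ($+$, $\C^+$); the incoming case is entirely symmetric. The implication \ref{prop:reg_spec_local_voisi}$\Rightarrow$\ref{prop:reg_spec_local_fixe} is immediate: specializing the uniform limit in \ref{prop:reg_spec_local_voisi} to $\mu=\lambda$ shows that $C\Res_H(\lambda+i0^+)CW$ exists in the norm topology of $\mathcal{L}(\Hi)$, and since $\lambda$ lies in the interior of $K_\lambda$, which contains no accumulation point of eigenvalues located in $\C^+$, $\lambda$ is in particular not an accumulation point of eigenvalues in $\lambda+i(0,\infty)$. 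By Definition~\ref{def:point_spectral_regulier_classique_pour_H}, $\lambda$ is then an outgoing regular spectral point.

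For \ref{prop:reg_spec_local_fixe}$\Rightarrow$\ref{prop:reg_spec_local_voisi}, set $A(z):=C\Res_0(z)CW$. Since $W$ is bounded, Hypothesis \eqref{eq:ext_cont_reg} gives that $z\mapsto A(z)$ is continuous on $\mathring D(\lambda,r)\cap\bar\C^+$, with boundary values $A(\mu+i0^+)$. By Proposition~\ref{lm:equivalence_point_spectral_regulier}, regularity of $\lambda$ means that $\Id+A(\lambda+i0^+)$ is invertible in $\mathcal{L}(\Hi)$. As the set of invertible operators is open and inversion is continuous on it, continuity of $z\mapsto\Id+A(z)$ yields some $r'\in(0,r]$ such that $\Id+A(z)$ is invertible with uniformly bounded inverse on the compact half-disc $\{z:\,|z-\lambda|\le r',\ \im(z)\ge0\}$. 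On the interior part $\mathring D(\lambda,r')\cap\C^+\subset\rho(H_0)$, the standard Birman--Schwinger principle (valid since $V=CWC$ is relatively compact) shows that invertibility of $\Id+A(z)$ is equivalent to $z\in\rho(H)$; hence this region contains no eigenvalue of $H$, which already supplies the no-accumulation requirement of \ref{prop:reg_spec_local_voisi}.

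On this eigenvalue-free region, both $\Res_0(z)$ and $\Res_H(z)$ exist, and the factorized resolvent identity $\Res_H=\Res_0-\Res_0 CWC\Res_H$ yields, after multiplying on the left by $C$ and on the right by $CW$,
\begin{equation*}
C\Res_H(z)CW=\big(\Id+A(z)\big)^{-1}A(z).
\end{equation*}
The right-hand side extends continuously to $\mathring D(\lambda,r')\cap\bar\C^+$, being the composition of the continuous map $z\mapsto A(z)$ with operator inversion. Choosing $K_\lambda:=[\lambda-r'/2,\lambda+r'/2]$, the map $z\mapsto C\Res_H(z)CW$ is then continuous, hence uniformly continuous, on the compact set $\{\mu+i\varepsilon:\,\mu\in K_\lambda,\ 0\le\varepsilon\le r'/2\}$, so that $\sup_{\mu\in K_\lambda}\|C\Res_H(\mu+i\varepsilon)CW-C\Res_H(\mu+i0^+)CW\|\to0$ as $\varepsilon\to0^+$. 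This is precisely the uniform convergence asserted in \ref{prop:reg_spec_local_voisi}.

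The main obstacle is the propagation step, namely turning invertibility of $\Id+A$ at the single boundary point $\lambda+i0^+$ into invertibility on a genuine two-dimensional half-disc neighborhood. This is exactly where the boundary-continuity hypothesis \eqref{eq:ext_cont_reg} is essential, since it legitimizes the otherwise delicate interchange of the limit $\varepsilon\to0^+$ with the inversion of $\Id+A$. Once invertibility holds on a half-disc, both the identification of its non-invertibility set with the eigenvalues of $H$ and the concluding uniform-continuity argument are routine.
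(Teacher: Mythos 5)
Your proof is correct and follows essentially the same route as the paper's: both implications are reduced to the Birman--Schwinger characterization of Proposition \ref{lm:equivalence_point_spectral_regulier}, the boundary continuity of $z\mapsto C\Res_0(z)CW$ propagates invertibility of $\Id+C\Res_0(\cdot)CW$ from $\lambda$ to a closed half-disc, and the uniform limit is read off from the continuity of $z\mapsto(\Id+C\Res_0(z)CW)^{-1}$ there. The only differences are cosmetic: the paper invokes Proposition \ref{lm:equivalence_point_spectral_regulier} pointwise at each $\mu\in K_\lambda$ to exclude accumulation of eigenvalues, where you instead note the whole open half-disc is eigenvalue-free via the usual Birman--Schwinger principle, and your identity $C\Res_H(z)CW=(\Id+C\Res_0(z)CW)^{-1}C\Res_0(z)CW=\Id-(\Id+C\Res_0(z)CW)^{-1}$ is in fact the sign-correct version of the formula displayed in the paper's appendix.
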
	
\begin{proof}
See Appendix \ref{App:Propriétés_point_spect_reg}.
\end{proof}

Proposition \ref{prop:reg_spec_local} has the following consequence.
\begin{corollary}\label{prop:singularite_null_measure_set}
Suppose that for all $\lambda\in\sigma_\mathrm{ess}(H)$, there exists $r>0$ such that the maps \eqref{eq:ext_cont_reg} extend by continuity to $\mathring{D}(\lambda,r)\cap\bar\C^\pm$. Then the set of spectral singularities of $H$ is a closed set whose Lebesgue measure vanishes.
\end{corollary}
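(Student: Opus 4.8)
The plan is to treat the two assertions separately: closedness follows almost immediately from the local characterization of regularity in Proposition \ref{prop:reg_spec_local}, whereas the vanishing of the Lebesgue measure requires exploiting the holomorphy of $z\mapsto C\Res_0(z)C$ in $\C^\pm$ together with a boundary uniqueness theorem for scalar holomorphic functions. I stress at the outset that mere norm-continuity of the boundary family $\lambda\mapsto C\Res_0(\lambda\pm i0^+)CW$ cannot by itself yield a null set of singularities (a continuous family of compact operators $\Id+K(\lambda)$ can be non-invertible on a whole interval), so the analytic structure inherited from the half-plane is essential.

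For closedness, I would show that the set of regular spectral points is relatively open in $\sigma_{\mathrm{ess}}(H)$. If $\lambda$ is a regular spectral point, applying Proposition \ref{prop:reg_spec_local} in both the outgoing and the incoming case produces a compact interval $K_\lambda$, with $\lambda$ in its interior, free of accumulation points of eigenvalues located in $\C^+$ and in $\C^-$, and on which both limits $C\Res_H(\mu\pm i0^+)CW$ exist uniformly. For any $\mu\in\sigma_{\mathrm{ess}}(H)$ in the interior of $K_\lambda$, a smaller compact interval $K_\mu\subset K_\lambda$ centred at $\mu$ inherits these properties, so the implication \ref{prop:reg_spec_local_voisi}$\Rightarrow$\ref{prop:reg_spec_local_fixe} of Proposition \ref{prop:reg_spec_local} shows that $\mu$ is again regular. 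Hence the regular points form a relatively open subset of $\sigma_{\mathrm{ess}}(H)$, so the spectral singularities form a relatively closed subset of the closed set $\sigma_{\mathrm{ess}}(H)$, thus a closed subset of $\R$.

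For the null measure, since $\sigma_{\mathrm{ess}}(H)$ is $\sigma$-compact it suffices, after covering it by countably many discs $\mathring D(\lambda_0,r)$ furnished by the hypothesis, to prove that for each such disc the outgoing (resp. incoming) spectral singularities contained in a compact subinterval $I\subset\mathring D(\lambda_0,r)\cap\R$ form a null set. Fixing the outgoing case, set $G(z):=\Id+C\Res_0(z)CW$; by Proposition \ref{lm:equivalence_point_spectral_regulier}, $\lambda$ is an outgoing spectral singularity exactly when $G(\lambda+i0^+)$ fails to be invertible. The map $z\mapsto C\Res_0(z)CW$ is holomorphic in the half-disc $\Omega:=\mathring D(\lambda_0,r')\cap\C^+$ for $r'<r$, takes values in the compact operators, and by \eqref{eq:ext_cont_reg} extends norm-continuously to the compact set $\overline\Omega$. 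The heart of the argument is then to replace $G$ by a scalar function: because $\{C\Res_0(z)CW:z\in\overline\Omega\}$ is a norm-compact set of compact operators, it is uniformly finite-rank approximable, so there is a finite-rank orthogonal projection $P$ with $\sup_{z\in\overline\Omega}\norme{(\Id-P)C\Res_0(z)CW}<1$. A Feshbach--Schur (Grushin) reduction with respect to $P$ and $\Id-P$ then encodes the invertibility of $G(z)$ into the invertibility of a Schur complement $E(z)$ acting on the finite-dimensional space $\Ran(P)$, where $E$ is holomorphic in $\Omega$ and continuous up to $I$. Putting $d(z):=\det(E(z))$ gives a scalar function, holomorphic in $\Omega$ and continuous on $\overline\Omega$, with $d(z)\neq0$ if and only if $G(z)$ is invertible. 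The analytic Fredholm theorem, together with the Birmann--Schwinger correspondence identifying the zeros of $G$ in $\C^+$ with the (non-real, hence discrete in $\C^+$) eigenvalues of $H$, guarantees $d\not\equiv0$. A boundary uniqueness theorem of Luzin--Privalov type for bounded holomorphic functions on a half-disc then forbids the boundary values $d(\lambda+i0^+)$, $\lambda\in I$, from vanishing on a set of positive Lebesgue measure; hence the outgoing spectral singularities in $I$ are null. The incoming case is identical with $\C^-$, and the claim follows by countable union.

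The main obstacle is precisely this passage to a scalar holomorphic function without assuming $C\Res_0(z)CW$ lies in a Schatten ideal, which would otherwise supply a genuine Fredholm determinant. The substitute is the uniform finite-rank approximation over the compact set $\overline\Omega$, legitimate because the family is norm-compact and each $C\Res_0(z)CW$ is compact; verifying that the free block $(\Id-P)G(z)(\Id-P)$ is uniformly invertible on $\overline\Omega$ and that the resulting $E(z)$ is genuinely holomorphic and continuous up to the boundary $I$ is the step demanding the most care, as is checking that the conformal transfer of $\Omega$ to the disc carries positive-measure subsets of $I$ to positive-measure boundary sets so that the boundary uniqueness theorem applies verbatim.
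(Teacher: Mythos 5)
Your proof is correct, and it differs from the paper's only in the measure-theoretic half. For closedness you argue exactly as the paper does: Proposition \ref{prop:reg_spec_local} makes outgoing/incoming regularity a relatively open condition on $\sigma_{\mathrm{ess}}(H)$, so the sets of outgoing and incoming singularities (hence their union) are closed. For the null-measure statement, the paper's proof is essentially a citation: by Proposition \ref{lm:equivalence_point_spectral_regulier} the singularities are exactly the points where $\Id+C\Res_0(\lambda\pm i0^+)CW$ fails to be invertible, and \cite[Theorem 1.8.3]{Ya92_01} (for a compact-operator-valued function, analytic off the real axis and norm-continuous up to it, $\Id+K$ is invertible outside a closed null set of boundary points) concludes. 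What you have done is reconstruct a self-contained proof of precisely that cited black box: localization to half-discs plus a countable union (a step the paper leaves implicit, its hypothesis being only local), uniform finite-rank approximation of the norm-compact family $\{C\Res_0(z)CW\}_{z\in\overline{\Omega}}$, a Feshbach--Schur reduction to a finite-dimensional holomorphic Schur complement $E(z)$ with scalar determinant $d=\det E$, exclusion of $d\equiv0$ via the Birman--Schwinger correspondence between non-invertibility points in $\C^\pm$ and the non-real eigenvalues of $H$ (discrete there, since $V$ is a relatively compact perturbation), and finally a Luzin--Privalov boundary uniqueness theorem transported by a conformal map. Each of these steps is sound: the uniform finite-rank approximation is legitimate for a norm-compact set of compact operators in a separable Hilbert space; the smallness condition $\sup_z\|(\Id-P)C\Res_0(z)CW\|<1$ makes the $(\Id-P)$-block uniformly invertible, so $d$ is holomorphic in $\Omega$, continuous on $\overline{\Omega}$, and vanishes exactly where $\Id+C\Res_0(z)CW$ is non-invertible; and your opening observation that norm-continuity of the boundary values alone could never give a null set is well taken -- analyticity is indeed the engine. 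In terms of trade-offs: the paper's route is two lines and delegates the analytic work to a standard lemma of stationary scattering theory, while yours is longer but self-contained, makes explicit both where analyticity and the Birman--Schwinger principle enter and how the paper's purely local hypothesis is globalized by a covering argument.
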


\begin{proof}
Let $E:=E^+\cup E^-$, where
\begin{equation*}
E^\pm:=\lbrace \lambda\in\sigma_{\mathrm{ess}}(H_0), \lambda\text{ is an outgoing/incoming spectral singularity of } H \rbrace.
\end{equation*}
It follows from Proposition \ref{prop:reg_spec_local} that $E^+$ and $E^-$ are closed. Hence $E$ is closed.  Moreover, by the assumption that $z\mapsto C\Res_0(z)CW$ extends by continuity to the real axis, we can apply \cite[Theorem 1.8.3]{Ya92_01}, which implies that $\Id+C\Res_0(\lambda\pm i0^+)CW$ is invertible in $\mathcal{L}(\Hi)$ for a.e. $\lambda\in\sigma_{\mathrm{ess}}(H)$. By Proposition \ref{lm:equivalence_point_spectral_regulier}, this proves that the Lebesgue measures of $E^+$ and $E^-$ vanish.
\end{proof}

Our next concern is to characterize outgoing/incoming regular spectral points $\lambda\in\sigma_{\mathrm{ess}}(H)$ as nontangential limits of the weighted resolvent $C\mathcal{R}_H(z)CW$, as $z\to\lambda$. If $\lambda$ is a left spectral threshold, we will assume that there exists $r>0$ such that the map
\begin{equation}\label{eq:ext_cont_reg_left}
\{\lambda+\nu , \, |\nu|<r , \, 0<\mathrm{arg}(\nu)<2\pi\} = \mathring{D}(\lambda,r)\setminus[\lambda,\infty)\ni z\mapsto C\Res_0(z)C
\end{equation}
extends by continuity to $\{\lambda+\nu , \, |\nu|<r , \, 0\le\mathrm{arg}(\nu)\le2\pi\}$. Il $\lambda$ is a right spectral threshold, we will assume that there exists $r>0$ such that the map
\begin{equation}\label{eq:ext_cont_reg_right}
\{\lambda+\nu \, , \, |\nu|<r , \, -\pi<\mathrm{arg}(\nu)<\pi\} = \mathring{D}(\lambda,r)\setminus(-\infty,\lambda] \ni z\mapsto C\Res_0(z)C
\end{equation}
extends by continuity to $\{\lambda+\nu \, , \, |\nu|<r , \, -\pi\le\mathrm{arg}(\nu)\le\pi\}$. The next proposition proves, in particular, that under these assumptions, outgoing and incoming spectral singularities at thresholds coincide.

			\begin{proposition}\label{prop:equivalence_point_spec_regulier_bord_Lindelof_classique} 
			$ $
			\begin{enumerate}
			\item\label{it:non-thres} Let $\lambda$ be in the interior of $\sigma_{\mathrm{ess}}(H)$. Suppose that there exists $r>0$ such that the maps \eqref{eq:ext_cont_reg} extend by continuity to $\mathring{D}(\lambda,r)\cap\bar\C^\pm$. The following conditions are equivalent:
			\begin{enumerate}[label=(\roman*)]
				\item $\lambda$ is an outgoing/incoming regular spectral point,
				\item There exist a complex neighborhood $\mathcal{O}_\lambda$ of $\lambda$ such that $\mathcal{O}_\lambda^\pm:=\mathcal{O}_\lambda\cap\C^\pm\subset\rho(H)$ and a continuous map $\gamma:( 0,1\rbrack\rightarrow\mathcal{O}_\lambda^\pm$ such that 
	\begin{equation*}
		\lim_{\varepsilon\rightarrow 0^+}\gamma(\varepsilon)=\lambda \quad \text{and} \quad 	\lim_{\varepsilon\rightarrow 0^+}C\Res_H(\gamma(\varepsilon))CW
	\end{equation*}
	exists in the norm topology of $\mathcal{L}(\Hi)$.
			\end{enumerate}
			\item\label{it:threshold} Let $\lambda \in \sigma_{\mathrm{ess}}(H)$ be a spectral threshold of $H$ such that \eqref{eq:left_thr} holds. Suppose that there exists $r>0$ such that the map \eqref{eq:ext_cont_reg_left} extends by continuity to $\{\lambda+\nu , \, |\nu|<r , \, 0\le\mathrm{arg}(\nu)\le2\pi\}$. The following conditions are equivalent:
			\begin{enumerate}[label=(\roman*)]
				\item\label{it:reg_spec_point_thres} $\lambda$ is an outgoing regular spectral point of $H$,
				\item\label{it:reg_spec_point_thres2} $\lambda$ is an incoming regular spectral point of $H$,				
				\item\label{it:reg_spec_point_thres3} There exist a complex neighborhood $\mathcal{O}_\lambda$ of $\lambda$ such that $\mathcal{O}_\lambda\setminus[\lambda,\infty)\subset\rho(H)$ and a continuous map $\gamma:( 0,1\rbrack\rightarrow\mathcal{O}_\lambda\setminus[\lambda,\infty)$ such that 
	\begin{equation}\label{eq:limit_gamma}
		\lim_{\varepsilon\rightarrow 0^+}\gamma(\varepsilon)=\lambda \quad \text{and} \quad 	\lim_{\varepsilon\rightarrow 0^+}C\Res_H(\gamma(\varepsilon))CW
	\end{equation}
	exists in the norm topology of $\mathcal{L}(\Hi)$.
			\end{enumerate}
			The same holds if \eqref{eq:right_thr} holds instead of \eqref{eq:left_thr}, assuming that there exists $r>0$ such that the map \eqref{eq:ext_cont_reg_right} extends by continuity to $\{\lambda+\nu , \, |\nu|<r , \, -\pi\le\mathrm{arg}(\nu)\le\pi\}$ and replacing $\mathcal{O}_\lambda\setminus[\lambda,\infty)$ by $\mathcal{O}_\lambda\setminus(-\infty,\lambda]$ in \ref{it:reg_spec_point_thres3}.
			\end{enumerate}
	\end{proposition}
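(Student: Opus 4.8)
The plan is to reduce every assertion to the invertibility, at the boundary point $\lambda$, of the operator
$G(z):=\mathrm{Id}+C\Res_0(z)CW$, exploiting that $G$ extends continuously up to the boundary by the standing continuity hypothesis on $C\Res_0(z)C$. First I would record the Birman--Schwinger dictionary: for $z\in\C^\pm$ one has $z\in\rho(H)$ if and only if $G(z)$ is invertible in $\mathcal{L}(\Hi)$, in which case $C\Res_H(z)CW=G(z)^{-1}C\Res_0(z)CW$ and $G(z)^{-1}=\mathrm{Id}-C\Res_H(z)CW$. This follows from the factorization $H-z=(H_0-z)(\mathrm{Id}+\Res_0(z)CWC)$ together with the resolvent identity. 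Since $C\Res_0(z)C$, hence $C\Res_0(z)CW$, extends continuously to the relevant boundary, $G$ extends continuously with boundary value $G_\pm(\lambda):=\mathrm{Id}+C\Res_0(\lambda\pm i0^+)CW$ at $\lambda$, and by Proposition \ref{lm:equivalence_point_spectral_regulier}, $\lambda$ is an outgoing/incoming regular spectral point if and only if $G_\pm(\lambda)$ is invertible.

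For the non-threshold statement \ref{it:non-thres}, the direction (i)$\Rightarrow$(ii) runs as follows. If $\lambda$ is outgoing regular, then $G_+(\lambda)$ is invertible; since the invertible operators form an open set and $G$ is norm-continuous on $\mathring D(\lambda,r)\cap\bar\C^+$, the operator $G(z)$ remains invertible on a relative neighborhood of $\lambda$, whose intersection with $\C^+$ I take as $\mathcal{O}_\lambda^+$. By the Birman--Schwinger equivalence $\mathcal{O}_\lambda^+\subset\rho(H)$, and $z\mapsto C\Res_H(z)CW=\mathrm{Id}-G(z)^{-1}$ extends continuously up to $\lambda$, so $\gamma(\varepsilon)=\lambda+i\varepsilon$ yields (ii). Conversely, for (ii)$\Rightarrow$(i), suppose $C\Res_H(\gamma(\varepsilon))CW\to L$ along the given curve $\gamma$ valued in $\mathcal{O}_\lambda^+\subset\rho(H)$. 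Then $G(\gamma(\varepsilon))^{-1}=\mathrm{Id}-C\Res_H(\gamma(\varepsilon))CW\to\mathrm{Id}-L$, while by continuity $G(\gamma(\varepsilon))\to G_+(\lambda)$. Passing to the limit in $G(\gamma(\varepsilon))G(\gamma(\varepsilon))^{-1}=\mathrm{Id}=G(\gamma(\varepsilon))^{-1}G(\gamma(\varepsilon))$, both factors converging in norm, gives $G_+(\lambda)(\mathrm{Id}-L)=\mathrm{Id}=(\mathrm{Id}-L)G_+(\lambda)$, so $G_+(\lambda)$ is invertible and $\lambda$ is outgoing regular. The incoming case is identical with $\C^+$ replaced by $\C^-$.

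For the threshold statement \ref{it:threshold} (say \eqref{eq:left_thr} holds), the additional and genuinely new point is the equivalence \ref{it:reg_spec_point_thres}$\Leftrightarrow$\ref{it:reg_spec_point_thres2}, i.e.\ $G_+(\lambda)$ invertible iff $G_-(\lambda)$ invertible. Here I would use that, by the threshold assumption, $C\Res_0(z)C$ is holomorphic on the slit disk $\mathring D(\lambda,r)\setminus[\lambda,\infty)$ and, by hypothesis, extends continuously up to the tip $z=\lambda$ of the slit. Since both approaches $z=\lambda\pm i\varepsilon$ lie in the slit disk and tend to the single tip point as $\varepsilon\to0^+$, continuity forces the two limits to coincide, that is $C\Res_0(\lambda+i0^+)C=C\Res_0(\lambda-i0^+)C$, whence $G_+(\lambda)=G_-(\lambda)$ and \ref{it:reg_spec_point_thres}$\Leftrightarrow$\ref{it:reg_spec_point_thres2}. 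The equivalence with \ref{it:reg_spec_point_thres3} is then obtained exactly as in the non-threshold case, working in the slit disk instead of a half-disk: invertibility of the common boundary value propagates to a punctured neighborhood $\mathcal{O}_\lambda\setminus[\lambda,\infty)\subset\rho(H)$ by openness and continuity (giving \ref{it:reg_spec_point_thres3} with $\gamma(\varepsilon)=\lambda+i\varepsilon$), while conversely a curve limit along $\gamma$ valued in $\mathcal{O}_\lambda\setminus[\lambda,\infty)$ forces $G(\lambda)$ to be invertible by the same product-of-limits argument, using $G(\gamma(\varepsilon))\to G(\lambda)$. The right-threshold case \eqref{eq:right_thr} is handled symmetrically, replacing $[\lambda,\infty)$ by $(-\infty,\lambda]$ and the slit-disk extension hypothesis accordingly.

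The only step that is not a soft continuity/openness argument is the identification $C\Res_0(\lambda+i0^+)C=C\Res_0(\lambda-i0^+)C$ at a threshold, which is precisely where the assumed single-valued continuous extension to the tip of the slit is essential and which encodes that at a threshold there is no spectrum on one side to produce a jump. I expect this to be the main point to argue carefully; once the Birman--Schwinger dictionary between regularity of $\lambda$ and invertibility of $G(\lambda)$, together with the continuity of $G$ up to the boundary, is in place, the remaining arguments are routine.
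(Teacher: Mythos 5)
Your proposal is correct and follows essentially the same route as the paper's proof: both reduce spectral regularity to invertibility of the boundary Birman--Schwinger operator $\Id + C\Res_0(\lambda\pm i0^+)CW$ via Proposition \ref{lm:equivalence_point_spectral_regulier}, propagate invertibility to a (relative) neighborhood using norm-continuity of the extension and openness of the set of invertible operators, identify that neighborhood with a subset of $\rho(H)$ by the usual Birman--Schwinger principle, and, at a threshold, exploit that the assumed continuous extension to the slit region is single-valued at the tip, so that the outgoing and incoming boundary values of $C\Res_0(z)C$ coincide there. The only differences are organizational -- you prove \ref{it:reg_spec_point_thres}$\Leftrightarrow$\ref{it:reg_spec_point_thres2} of part \eqref{it:threshold} directly from $G_+(\lambda)=G_-(\lambda)$ and close the converse implications with a product-of-limits argument where the paper re-runs the Fredholm-alternative step -- and these are immaterial.
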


	\begin{proof}
		Consider the most difficult case \eqref{it:threshold}. We prove that $\eqref{it:threshold}\ref{it:reg_spec_point_thres}\Rightarrow\eqref{it:threshold}\ref{it:reg_spec_point_thres3}$.
		 Let $\lambda\in\sigma_{\mathrm{ess}}(H)$ be an outgoing regular spectral point of $H$ and suppose that \eqref{eq:left_thr} holds.
		  By Proposition \ref{lm:equivalence_point_spectral_regulier}, we know that $\Id+C\Res_0(\lambda+ i0^+)CW$ is invertible in $\mathcal{L}(\Hi)$. Since, by assumption, the map \eqref{eq:ext_cont_reg_left} extends by continuity to $\{\lambda+\nu , \, |\nu|<r , \, 0\le\mathrm{arg}(\nu)\le2\pi\}$ for some $r>0$, and since the set of invertible operators in $\mathcal{L}(\Hi)$ is open, this implies that there exists a complex neighborhood $\mathcal{O}_\lambda\subset \mathring{D}(\lambda,r)$ of $\lambda$,
		   such that the map
		  \begin{equation}\label{eq:map_analytic_bounded}
		  	 \mathcal{O}_\lambda\setminus[\lambda,\infty)\ni z\mapsto \big( \Id+C\Res_0(z)CW\big)^{-1}
		  \end{equation}
		 is analytic. The usual Birmann-Schwinger principle yields $\mathcal{O}_\lambda\setminus[\lambda,\infty)\subset\rho(H)$. It then suffices to take $\gamma(\varepsilon)=\lambda+i\delta\varepsilon$, with $\delta>0$ small enough.
		 
		 Next we prove that $\eqref{it:threshold}\ref{it:reg_spec_point_thres3}\Rightarrow\eqref{it:threshold}\ref{it:reg_spec_point_thres}$. In the same way as in Proposition \ref{lm:equivalence_point_spectral_regulier}, the existence of the limit \eqref{eq:limit_gamma} is equivalent to the invertibility of $\mathrm{Id}+C\Res_0(\gamma(0^+))CW$ in $\mathcal{L}(\Hi)$. Note that the limit $C\Res_0(\gamma(0^+))C$ exists in the norm topology of $\mathcal{L}(\Hi)$ since we have assumed that there exists $r>0$ such that the map $z\mapsto C\Res_0(z)C$ extends by continuity to $\{\lambda+\nu , \, |\nu|<r , \, 0\le\mathrm{arg}(\nu)\le2\pi\}$. We can then argue as before; this gives the existence of a complex neighborhood $\mathcal{O}_\lambda$ of $\lambda$ such that the map \eqref{eq:map_analytic_bounded} is analytic and extends by continuity to $\{\lambda+\nu , \, |\nu|<r , \, 0\le\mathrm{arg}(\nu)\le2\pi\}$. In particular, $\Id+C\Res_0(\lambda+ i0^+)CW$ is well-defined and invertible in $\mathcal{L}(\Hi)$. Applying Proposition \ref{lm:equivalence_point_spectral_regulier}, this shows that $\lambda$ is an incoming regular spectral point of $H$.
		 
		 The proof of $\eqref{it:threshold}\ref{it:reg_spec_point_thres2}\Leftrightarrow\eqref{it:threshold}\ref{it:reg_spec_point_thres3}$ is identical. One proceeds analogously to prove \eqref{it:threshold} in the case where \eqref{eq:right_thr} holds instead of \eqref{eq:left_thr}.
		 
		 Finally, the argument easily adapts to prove \eqref{it:non-thres}.
	\end{proof}

Note that in the case of an outgoing/incoming regular spectral point $\lambda\in\sigma_{\mathrm{ess}}(H)$, our proof shows that $z\mapsto C\Res_H(z)CW$ has a nontangential limit at $\lambda$, in the sense that the limit
\begin{equation}\label{eq:limit_indep_gamma}
\lim_{\varepsilon\to0^+}C\Res_H(\gamma(\varepsilon))CW
\end{equation}
does not depend on the continuous curve $\gamma:(0,1]\to\C^\pm$ such that $\gamma(\varepsilon)\to\lambda$ as $\varepsilon\to0^+$. See Figure \ref{fig4}. This property can also be deduced from Lindelöf's Theorem (see e.g. \cite{Rud1980}) if the map $z\mapsto C\Res_H(z)CW$ is known to be analytic and bounded in the domain $\mathcal{O}_\lambda^\pm$.

\begin{figure}[ht]
			\begin{center}
				\begin{tikzpicture}
					\draw[<->] (-2,0) --(2,0);
					
					\draw (0,0) circle(1);
					\draw[fill=gray!30](1,0) arc(0:180:1);
					\draw (0,0) node {$\bullet$};
					\draw (0,0) node[above] {$\lambda$};
					\draw[-,very thick] (-1.2,0) -- (1.2,0);
					\draw(0.3,-0.9)node[above]{$\mathcal{O}_\lambda$};	
					\draw[very thin] plot[domain=0:-0.5]	(\x,{-\x^1.6}) node[below]{$\gamma$};
				\end{tikzpicture}
			\end{center}
			\caption{ \footnotesize  \textbf{Outgoing spectral singularity located inside the essential spectrum.} The figure shows an example of a curve $\gamma:(0,1]\to\mathcal{O}^+_\lambda$, where $\mathcal{O}^+_\lambda=\mathcal{O}_\lambda\cap \C^+$ and $\mathcal{O}_\lambda$ is a complex neighborhood of $\lambda$. The thick line represents the essential spectrum of $H$.}\label{fig4}
		\end{figure}
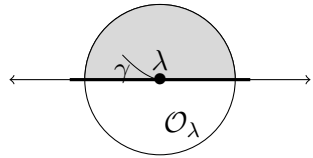

Likewise, in the case of a regular spectral point $\lambda$ located at a spectral threshold of $H$, the limit \eqref{eq:limit_indep_gamma} does not depend on the continuous curve $\gamma:(0,1]\to\C\setminus [\lambda,\infty)$ (or $\gamma:(0,1]\to\C\setminus (-\infty,\lambda]$, depending on whether \eqref{eq:left_thr} or \eqref{eq:right_thr} holds), which is the reason why the incoming and outgoing spectral singularities coincide. See Figure \ref{fig2}.

		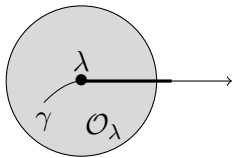
\begin{figure}[ht]
			\begin{center}
				\begin{tikzpicture}
					\draw[->] (0,0) --(2,0);
					
					\draw[fill=gray!30](0,0) circle (1);
					\draw (0,0) node {$\bullet$};
					\draw (0,0) node[above] {$\lambda$};
					\draw[-,very thick] (0,0) -- (1.2,0);
					\draw(0.3,-0.9)node[above]{$\mathcal{O}_\lambda$};	
					\draw[very thin] plot[domain=0:-0.5]	(\x,{\x^1.8}) node[below]{$\gamma$};
				\end{tikzpicture}
			\end{center}
			\caption{ \footnotesize  \textbf{Spectral singularity located at a threshold.} The figure shows an example of a curve $\gamma:(0,1]\to\mathcal{O}_\lambda\setminus[\lambda,\infty)$, where $\mathcal{O}_\lambda$ is a complex neighborhood of $\lambda$. The thick line represents the essential spectrum of $H$.}\label{fig2}
		\end{figure}

\subsection{Spectral singularities for dissipative operators}\label{sec:dissipative case} 
		
		In this section, we focus on the particular case where the operator $H$ is dissipative. Recall that $H=H_0+V$ with $V\in\mathcal{L}(\Hi)$. We write
		\begin{equation*}
			V = V_1 - i V_2, \quad V_1=CW_1C, \quad V_2=CW_2C,
		\end{equation*}
		where $W_1:=\re(W)$ and $W_2:=-\im(W)$. Here the real and imaginary parts of a bounded operator $A\in\mathcal{L}(H)$ are defined as usual by $\re(A):=\frac{1}{2}(A+A^*)$, $\im(A):=\frac{1}{2i}(A-A^*)$. We suppose that
		\begin{equation*}
			W_2\ge0,
		\end{equation*}
		so that 
		\begin{equation*}
			H=H_0+V_1-iV_2=: H_{V_1}-iV_2
		\end{equation*}
		is indeed dissipative. Our purpose is to prove that if $\lambda$ is an outgoing regular spectral point of the self-adjoint part $H_{V_1}=H_0+V_1$, then $\lambda$ is also an outgoing regular spectral point for $H$. In other words, adding the `dissipative part' $-iV_2$ cannot create outgoing spectral singularities.

	We begin with recalling the following easy lemma (see \cite[Lemma 6.1]{Da80_01} or \cite[Lemma 3.1]{FaFr18_01}). We include a proof for the convenience of the reader.
		
		\begin{lemma}\label{lm:eigen-diss}
			Let $\lambda\in\R$ be a real eigenvalue of $H$. Then $\lambda$ is an eigenvalue of $H_{V_1}$ and 
			\begin{equation}\label{eq:lien_espace_propre_H_et_H_V1}
				\Ker(H-\lambda)\subset\Hi_\mathrm{disc}(H_{V_1})\cap\Ker(V_2).
			\end{equation}
		\end{lemma}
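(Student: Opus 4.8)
The plan is to first isolate the algebraic content of the dissipativity, and then to upgrade the resulting eigenvector of the self-adjoint part $H_{V_1}$ to a genuinely \emph{discrete} eigenvector.

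Let $u\in\Ker(H-\lambda)$, $u\neq0$, so that $Hu=\lambda u$ with $\lambda\in\R$. Writing $H=H_{V_1}-iV_2$, where $H_{V_1}=H_0+V_1$ is self-adjoint and $V_2=CW_2C\ge0$ (since $W_2\ge0$ and $C=C^*$), I would pair the eigenvalue equation with $u$:
\begin{equation*}
\lambda\norme{u}_\Hi^2=\scal{Hu}{u}_\Hi=\scal{H_{V_1}u}{u}_\Hi-i\scal{V_2u}{u}_\Hi .
\end{equation*}
Here $\scal{H_{V_1}u}{u}_\Hi\in\R$ while $\scal{V_2u}{u}_\Hi\ge0$. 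Taking imaginary parts (the left-hand side being real) forces $\scal{V_2u}{u}_\Hi=0$, and the positivity of $V_2$ then gives $V_2u=0$, i.e. $u\in\Ker(V_2)$. Substituting back yields $H_{V_1}u=Hu+iV_2u=\lambda u$, so $\lambda\in\sigma_\mathrm{p}(H_{V_1})$ and $u\in\Ker(H_{V_1}-\lambda)$. At this point one already has $\Ker(H-\lambda)\subset\Ker(H_{V_1}-\lambda)\cap\Ker(V_2)$, which is the easy half of the statement.

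It remains to show that $\lambda$ is an \emph{isolated} eigenvalue of $H_{V_1}$ of finite multiplicity, so that $u\in\Hi_\mathrm{disc}(H_{V_1})$. Finite multiplicity is immediate: $V_1=CW_1C$ is relatively compact with respect to $H_0$, hence $\sigma_\mathrm{ess}(H_{V_1})=\sigma_\mathrm{ess}(H_0)$ and every eigenvalue of $H_{V_1}$ outside $\sigma_\mathrm{ess}(H_0)$ has finite multiplicity by Weyl's theorem. The delicate point is to rule out that $\lambda$ is embedded in $\sigma_\mathrm{ess}(H_0)$. Here I would exploit the self-adjointness of $H_{V_1}$ through Stone's formula: for every $\phi\in\Hi$,
\begin{equation*}
\lim_{\varepsilon\to0^+}\varepsilon\,\im\scal{\phi}{C\Res_{H_{V_1}}(\lambda+i\varepsilon)C\phi}_\Hi=\norme{E_{H_{V_1}}(\{\lambda\})C\phi}_\Hi^2 ,
\end{equation*}
where $E_{H_{V_1}}$ denotes the spectral measure of $H_{V_1}$. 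If $\lambda$ were a regular spectral point of $H_{V_1}$, then by Proposition \ref{lm:equivalence_point_spectral_regulier} (applied to $H_{V_1}$) together with Hypothesis \ref{hyp:principe_absorption_limite} and the resolvent identity, the boundary value $C\Res_{H_{V_1}}(\lambda+i0^+)C$ would exist in $\mathcal{L}(\Hi)$, so the left-hand side above would vanish and $E_{H_{V_1}}(\{\lambda\})C=0$. Applying this with $\phi=u$ and using $E_{H_{V_1}}(\{\lambda\})u=u$ gives $\scal{Cu}{u}_\Hi=\scal{E_{H_{V_1}}(\{\lambda\})Cu}{u}_\Hi=0$, which contradicts the strict positivity of $C$. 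Hence $\lambda$ cannot be a regular spectral point, i.e. it is necessarily a spectral singularity of $H_{V_1}$.

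The genuinely easy part is the sign argument of the second paragraph, which is the actual mechanism producing $V_2u=0$ and $H_{V_1}u=\lambda u$. The main obstacle is the final step: the Stone's-formula computation only locates the eigenvalue \emph{at} a spectral singularity of $H_{V_1}$, and turning this into genuine isolation (so that $u$ lands in $\Hi_\mathrm{disc}(H_{V_1})$ rather than merely in $\Ker(H_{V_1}-\lambda)$) requires excluding eigenvalues embedded in the interior of $\sigma_\mathrm{ess}(H_0)$ for the self-adjoint operator $H_{V_1}$. I expect this to be the crux, to be settled via the self-adjoint limiting-absorption theory; note in particular that when $\lambda$ lies below the essential spectrum (so that $\lambda<0$ here, since $H_0\ge0$) isolation is automatic and the argument is complete without further work.
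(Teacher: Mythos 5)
Your second and third paragraphs reproduce, almost verbatim, the paper's entire proof: pair $Hu=\lambda u$ with $u$, note that $\scal{u}{H_{V_1}u}_\Hi$ is real while the dissipative term contributes $\pm i\norme{W_2^{1/2}Cu}_\Hi^2$, take imaginary parts to conclude $W_2^{1/2}Cu=0$, hence $V_2u=0$ and $H_{V_1}u=Hu=\lambda u$. (The sign discrepancy with the paper is only the inner-product convention --- the paper's $\scal{\cdot}{\cdot}_\Hi$ is linear in the second entry --- and is immaterial.) At that point the paper \emph{stops}: it asserts that this ``establishes \eqref{eq:lien_espace_propre_H_et_H_V1}'', offering no argument whatsoever that $\lambda$ is an isolated eigenvalue of $H_{V_1}$ of finite multiplicity.

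So the obstacle you isolate in your last two paragraphs is not a gap in your argument relative to the paper's; it is a gap between the lemma's wording and what the proof (the paper's and yours alike) delivers, namely $\Ker(H-\lambda)\subset\Ker(H_{V_1}-\lambda)\cap\Ker(V_2)$. Your Stone's-formula step is correct as far as it goes, but, as you say yourself, it only shows that an embedded $\lambda$ would be a spectral singularity of $H_{V_1}$ --- this is Proposition \ref{cor:embed} applied to $H_{V_1}$ --- and no argument from the stated hypotheses can upgrade this to isolation. Indeed, with $W_2=0$ (which the section allows), $H=H_{V_1}=-\Delta+V_1$ on $L^2(\mathbb{R}^3)$ with $V_1$ real and compactly supported can have a zero-energy eigenvalue, i.e.\ a real eigenvalue embedded at the threshold of $\sigma_{\mathrm{ess}}(H_{V_1})=[0,\infty)$; its eigenvectors are orthogonal to, hence not contained in, $\Hi_\mathrm{disc}(H_{V_1})$. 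Thus the inclusion into $\Hi_\mathrm{disc}(H_{V_1})$ should be read either as requiring an additional hypothesis (absence of embedded eigenvalues of $H_{V_1}$) or as weakened to $\Ker(H_{V_1}-\lambda)\cap\Ker(V_2)$; the weaker statement is all the paper actually uses later (see the discussion preceding Proposition \ref{prop:ac-diss}). In short: your ``easy half'' is the whole of the paper's proof, and your instinct that the discrete-subspace claim needs more is correct --- it needs more than the hypotheses provide, so you should not have expected to close it.
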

		
		\begin{proof}
			Let $u\in\Ker(H-\lambda)$, $u\neq0$. Then 
			\begin{equation*}
				\lambda\norme{u}_\Hi^2=\scal{u}{Hu}_\Hi=\scal{u}{H_{V_1}u}_\Hi-i\Big \|W_2^\frac{1}{2}Cu\Big\|_\Hi^2.
			\end{equation*}
			Since $\lambda\in\R$, identifying the real and imaginary parts, we obtain that $u\in\Ker(W_2^\frac{1}{2}C)\subset\Ker(V_2)$ and therefore $Hu=H_{V_1}u=\lambda u$. This establishes \eqref{eq:lien_espace_propre_H_et_H_V1}.
		\end{proof}
		
		Note that Definition \ref{def:point_spectral_regulier_classique_pour_H} of a regular spectral point of $H$ applies to $H_{V_1}$ as well. In other words, $\lambda\in\sigma_{\mathrm{ess}}(H_{V_1})$ is an outgoing/incoming regular spectral point of $H_{V_1}$ if 
			\begin{equation*}
				C\Res_{V_1}(\lambda\pm i0^+)CW_1:=\lim_{\varepsilon\rightarrow 0^+}C\Res_{V_1}(\lambda\pm i\varepsilon)CW_1
			\end{equation*}
			exits in $\mathcal{L}(\Hi)$. Otherwise, $\lambda$ is an outgoing/incoming spectral singularity of $H_{V_1}$.  Moreover, Proposition \ref{lm:equivalence_point_spectral_regulier} applied to $H_{V_1}$ shows that $\lambda$ is an outgoing/incoming regular spectral point of $H_{V_1}$ if and only if $\Id+C\Res_0(\lambda\pm i0^+)CV_1$ is invertible in $\mathcal{L}(\Hi)$, which is also equivalent to $\Id+\Res_0(\lambda\pm i0^+)V_1'$ being invertible in $\mathcal{L}(\Hi'_C)$.

		The next proposition is the main result of this subsection.

		\begin{proposition}\label{prop:no_spectr_sing_diss}
			Suppose that Hypothesis \ref{hyp:domaine_H_0_C} holds. Let $\lambda\in\sigma_\mathrm{ess}(H)$ and suppose that the limits \eqref{eq:LAP_lambda} exist in the norm topology of $\mathcal{L}(\Hi)$. If $\lambda$ is an outgoing regular spectral point of $H_{V_1}$ then $\lambda$ is an outgoing regular spectral point of $H$.
		\end{proposition}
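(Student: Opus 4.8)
The plan is to reduce everything to the Birman--Schwinger characterization of Proposition~\ref{lm:equivalence_point_spectral_regulier}. By hypothesis $\lambda$ is an outgoing regular spectral point of $H_{V_1}$, so $\Id+C\Res_0(\lambda+i0^+)CW_1$ is invertible in $\mathcal{L}(\Hi)$; and, by the same proposition applied to $H$, it suffices to prove that $\Id+C\Res_0(\lambda+i0^+)CW$ is invertible in $\mathcal{L}(\Hi)$, where $W=W_1-iW_2$ with $W_2\ge0$. Since $C$ is relatively compact with respect to $H_0$, all the operators built from $C\Res_0(\lambda+i0^+)C$ below are compact, so by the Fredholm alternative invertibility will always be equivalent to injectivity.

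First I would factor the operator of interest. Writing $R:=C\Res_0(\lambda+i0^+)C$, the relation $W=W_1-iW_2$ gives
\begin{equation*}
\Id+RW=(\Id+RW_1)\big(\Id-i(\Id+RW_1)^{-1}RW_2\big).
\end{equation*}
Using the resolvent formula $\Res_{V_1}(z)=(\Id+\Res_0(z)V_1)^{-1}\Res_0(z)$ together with the push-through identity $C(\Id+\Res_0(z)CW_1C)^{-1}=(\Id+C\Res_0(z)CW_1)^{-1}C$, one has, for $\varepsilon>0$,
\begin{equation*}
C\Res_{V_1}(\lambda+i\varepsilon)C=(\Id+C\Res_0(\lambda+i\varepsilon)CW_1)^{-1}C\Res_0(\lambda+i\varepsilon)C,
\end{equation*}
and letting $\varepsilon\to0^+$ yields $(\Id+RW_1)^{-1}R=C\Res_{V_1}(\lambda+i0^+)C=:S$, the limit existing in $\mathcal{L}(\Hi)$ because $R$ exists by \eqref{eq:LAP_lambda} and $\Id+RW_1$ is invertible. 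Hence $\Id+RW=(\Id+RW_1)(\Id-iSW_2)$, and since the first factor is invertible it remains only to show that $\Id-iSW_2$ is invertible.

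The heart of the argument is the positivity of $\im(S)$. Since $H_{V_1}$ is self-adjoint, for every $\varepsilon>0$,
\begin{equation*}
\tfrac{1}{2i}\big(C\Res_{V_1}(\lambda+i\varepsilon)C-C\Res_{V_1}(\lambda-i\varepsilon)C\big)=\varepsilon\,(\Res_{V_1}(\lambda+i\varepsilon)C)^*(\Res_{V_1}(\lambda+i\varepsilon)C)\ge0,
\end{equation*}
and letting $\varepsilon\to0^+$ gives $\im(S)=\tfrac{1}{2i}(S-S^*)\ge0$. As $S$ is compact and $W_2\ge0$, the operator $SW_2$ is compact, so I only need injectivity of $\Id-iSW_2$. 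Suppose $u=iSW_2u$ and set $g:=W_2u$, so that $u=iSg$ and
\begin{equation*}
\norme{W_2^{1/2}u}_\Hi^2=\scal{W_2u}{u}_\Hi=i\scal{g}{Sg}_\Hi.
\end{equation*}
The left-hand side is real and nonnegative, so $\scal{g}{Sg}_\Hi=i\beta$ for some $\beta\in\R$; then $\scal{g}{\im(S)g}_\Hi=\im\scal{g}{Sg}_\Hi=\beta\ge0$, while the displayed identity gives $\norme{W_2^{1/2}u}_\Hi^2=-\beta\ge0$. Therefore $\beta=0$, whence $W_2^{1/2}u=0$, $g=W_2u=0$, and finally $u=iSg=0$.

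The routine parts are the factorization and the push-through identity; the step requiring care is the last one, namely establishing $\im(S)\ge0$ and tracking the signs in the quadratic-form computation so as to force $\beta=0$. This is precisely where the dissipativity ($W_2\ge0$) and the self-adjointness of $H_{V_1}$ combine to rule out the creation of an outgoing spectral singularity.
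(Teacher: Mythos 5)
Your proof is correct, but it takes a genuinely different route from the paper's. The paper argues by contradiction through Theorem \ref{thm:spec_sing}: if $\lambda$ were an outgoing spectral singularity of $H$, there would exist a resonant state $\Psi\in\Hi_C'^+(\lambda)$ with $\Psi=-\Res_0(\lambda+i0^+)V'\Psi$, and computing $\im\big(\langle V'\Psi,\Psi\rangle_{\Hi_C,\Hi'_C}\big)$ in two ways (once via $W_2\ge0$, giving $\ge 0$, once via an $\varepsilon$-regularization of the outgoing free resolvent, giving $\le 0$) forces $W_2C'\Psi=0$, so that $\Psi$ is a resonant state of $H_{V_1}$, contradicting regularity of $\lambda$ for $H_{V_1}$. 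You instead stay entirely inside $\mathcal{L}(\Hi)$: you factor the Birman--Schwinger operator as $\Id+RW=(\Id+RW_1)\big(\Id-iSW_2\big)$ with $R=C\Res_0(\lambda+i0^+)C$ and $S=(\Id+RW_1)^{-1}R=C\Res_{V_1}(\lambda+i0^+)C$, and you prove injectivity of $\Id-iSW_2$ from the Stone-type positivity $\im(S)\ge0$ (inherited in norm limit from the self-adjoint resolvent $\Res_{V_1}$) combined with $W_2\ge0$, concluding by the Fredholm alternative; all the identities you invoke (second resolvent formula, push-through, norm convergence of the inverses) are valid under the stated assumptions, and the sign bookkeeping in the quadratic-form step is right. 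Both arguments ultimately rest on the same two positivity inputs, but they buy different things: your version is more elementary and self-contained, and in particular it never uses the extension $H'$, so Hypothesis \ref{hyp:domaine_H_0_C} is not actually needed in your argument (only the equivalence \ref{lm:equivalence_point_spectral_regulier_def_limite}$\Leftrightarrow$\ref{lm:equivalence_point_spectral_regulier_def_inversible} of Proposition \ref{lm:equivalence_point_spectral_regulier}); the paper's version, on the other hand, yields the structural byproduct \eqref{eq:W2_spec_sing} ($W_2C'\Psi=0$ for any outgoing resonant state), which is reused later, e.g.\ in the unique-continuation argument proving Proposition \ref{prop:schr_diss} for Schr\"odinger operators, and which your direct (non-contradiction) argument does not produce as stated.
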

	
		\begin{proof}
			Suppose that $\lambda$ is an outgoing regular spectral point of $H_{V_1}$. Suppose also, by contradiction, that $\lambda$ is an outgoing spectral singularity of $H$. By Theorem \ref{thm:spec_sing}, there exists $\Psi\in\Hi_C^+$ such that 
			\begin{equation}\label{eq:preuve_dissipatif_pas_de_singularités_entrantes}
				\Psi=-\Res_0(\lambda+i0^+)V'\Psi \quad \text{and} \quad (H'-\lambda)\Psi=0. 
			\end{equation}
			Recall from Section \ref{subsec:extension} that $V'\in\mathcal{L}(\Hi_C',\Hi_C)$ is given by $V'=CWC'$. Since $W=W_1-iW_2$, this yields
			\begin{equation*}
				\langle V'\Psi,\Psi\rangle_{\Hi_C,\Hi'_C}=\scal{W_1 C'\Psi}{C'\Psi}_\Hi+i\scal{W_2 C'\Psi}{C'\Psi}_\Hi,
			\end{equation*}
			and hence, since $W_2\ge0$,
			\begin{equation}\label{eq:d1}
				\im\big(\langle V'\Psi,\Psi\rangle_{\Hi_C,\Hi'_C}\big)=\scal{W_2C'\Psi}{C'\Psi}_\Hi\geq 0. 
			\end{equation}
			Now, using \eqref{eq:preuve_dissipatif_pas_de_singularités_entrantes}, we have
			\begin{align}
				\im\big(\langle V'\Psi,\Psi\rangle_{\Hi_C,\Hi'_C}\big)&=-\im\scal{V'\Psi}{\Res_0(\lambda+i0^+)V'\Psi}_{\Hi_C,\Hi'_C} \notag\\
				&=\frac{-1}{2i}\left(\scal{W C'\Psi}{C\Res_0(\lambda+i0^+)V'\Psi}_\Hi-\scal{C\Res_0(\lambda+i0^+)V'\Psi}{W C'\Psi}_\Hi\right)\notag\\
				&=\frac{-1}{2i}\left(\scal{W C'\Psi}{C\Res_0(\lambda+i0^+)V'\Psi}_\Hi-\scal{W C'\Psi}{C\Res_0(\lambda-i0^+)V'\Psi}_\Hi\right)\notag\\
				&=\lim_{\varepsilon\rightarrow 0^+}-\varepsilon\scal{W C'\Psi}{C\Res_0(\lambda+i\varepsilon)\Res_0(\lambda-i\varepsilon)V'\Psi}_\Hi\notag\\
				&=\lim_{\varepsilon\rightarrow 0^+}-\varepsilon\scal{\Res_0(\lambda-i\varepsilon)V'\Psi}{\Res_0(\lambda-i\varepsilon)V'\Psi}_\Hi\leq 0. \label{eq:d2}
			\end{align}
			Equations \eqref{eq:d1} and \eqref{eq:d2} imply that $W_2C'\Psi=0$. 
			Inserting this into \eqref{eq:preuve_dissipatif_pas_de_singularités_entrantes}, we obtain that 
			\begin{equation*}
				\Psi=-\Res_0(\lambda+i0^+)V_1'\Psi \quad\text{and}\quad (H_0'+V_1'-\lambda)\Psi=0.
			\end{equation*}
			which is a contradiction since $\lambda$ is a regular spectral point of $H_{V_1}$.
		\end{proof}

		\begin{remark}
			The previous proof actually shows that, under the conditions of Proposition \ref{prop:no_spectr_sing_diss},
			\begin{equation}\label{eq:W2_spec_sing}
				\lambda\text{ is an outgoing spectral singularity of }H \, \Rightarrow \, W_2C'\Psi=0.
			\end{equation}
			Since $C'$ is injective, this in turn yields
			\begin{equation*}
				W_2\text{ is injective} \, \Rightarrow \, H\text{ has no outgoing spectral singularities}.
			\end{equation*}
		\end{remark}
		
		In the context of Schrödinger operators, one can also combine \eqref{eq:W2_spec_sing} with the unique continuation principle (see e.g. \cite[Theorem XIII.63]{ReSi80_01}) to obtain Proposition \ref{prop:schr_diss}.
				
		\begin{proof}[Proof of Proposition \ref{prop:schr_diss}]
		Let $\lambda>0$. Assuming that \eqref{eq:short-range} holds, suppose by contradiction that $\lambda$ is an outgoing spectral singularity of $H$. Then, by Proposition \ref{prop:3.4}, there exists $\Psi\in L^2_{-\sigma/2}$, $\Psi\neq0$, such that
								\begin{equation*}
							(-\Delta+V(x)-\lambda)\Psi=0.
						\end{equation*} 
						This implies that $\Psi\in H^2_{\mathrm{loc}}(\mathbb{R}^3)$ and that $|\Delta\Psi(x)|\le (\|V\|_{L^\infty}+\lambda)|\Psi(x)|$ for a.e. $x\in\mathbb{R}^3$. Now, since there exists a non-trivial open set $U$ such that $\mathrm{Im}(V)<0$ on $U$, \eqref{eq:W2_spec_sing} and the fact that $C(x)=\langle x\rangle^{-\sigma/2}$ imply that $\Psi(x)=0$ on $U$. By \cite[Theorem XIII.63]{ReSi80_01}, we conclude that $\Psi=0$. This is a contradiction.
						
						The same argument holds for $\lambda=0$ under the condition that \eqref{eq:short-range-zero} holds.
		\end{proof}
		
	\section{Spectral resolution formula and spectral subspaces}\label{sec:spectr_res}

In this section, we begin with constructing a functional calculus for $H$. In Section \ref{subsec:spectr_proj}, we consider the simplest case of intervals without spectral singularities, next, in Section \ref{subsec:regul_funct_calc}, we construct a regularized functional calculus in intervals possibly containing spectral singularities. The latter is subsequently used in Section \ref{subsec:spectr_res} to establish the spectral resolution formula stated in Proposition \ref{prop:resolution}. Finally we prove Theorems \ref{thm:caracterisation_etat_disparaissent_à_infini} and \ref{thm:caracterisation_espace_absolument_continu} in Sections \ref{subsec:ads} and \ref{subsec:abs_cont}, respectively.

		
	\subsection{Functional calculus in intervals not containing spectral singularities}\label{subsec:spectr_proj}
			Consider first a closed interval $I\subset\mathbb{R}$ that does not contain any spectral singularities of $H$. We will furthermore assume that a limiting absorption principle holds for $H$ in $I$, in the sense that there exists $\varepsilon_0>0$ such that
		\begin{equation}\label{eq:limit_uniform_I}
			\sup_{\mathrm{Re}(z)\in I, \pm\mathrm{Im}(z)\in(0,\varepsilon_0)}  \big\|C\Res_H(z)CW\big\|_{\mathcal{L}(\Hi)}<\infty.
		\end{equation}
			Note that, in the same way as for $H_0$, Fatou's Theorem and \eqref{eq:limit_uniform_I} yield that the limits $C\Res_H(\lambda\pm i0^+)CW$ exist for almost every $\lambda\in I$, in the norm topology of $\mathcal{L}(\Hi)$, and that the map $I \ni\lambda\mapsto C\Res_H(\lambda\pm i0^+)CW\in\mathcal{L}(\Hi)$ is bounded.	The main purpose of this subsection is then to define a spectral projection for $H$ in $I$ by mimicking Stone's formula, setting
			\begin{equation}\label{eq:Formule de Stone}
				\mathds{1}_I(H):=\wlim_{\varepsilon\rightarrow 0^+}\frac{1}{2\pi i}\int_I\big(\Res_H(\lambda+i\varepsilon)-\Res_H(\lambda-i\varepsilon)\big)\mathrm{d}\lambda. 
			\end{equation}
			The next proposition shows that, under our assumptions, $\mathds{1}_I(H)$ is a well-defined non-orthogonal projection. 	The proof is similar to that given in \cite{FaFr18_01} for dissipative operators.  For the convenience of the reader, a sketch of the proof of Proposition \ref{prop:Projection_Spectrale_Stone} focusing on the differences with \cite{FaFr18_01} is reported in Appendix \ref{App:Operateurs_spectraux}.
			
			\begin{proposition}\label{prop:Projection_Spectrale_Stone}
				Suppose that Hypothesis \ref{hyp:principe_absorption_limite} holds.  Let $I\subset\mathbb{R}$ be a closed interval and suppose that there exists $\varepsilon_0>0$ such that \eqref{eq:limit_uniform_I} holds. Then the weak limit  \eqref{eq:Formule de Stone} exists in $\mathcal{L}(\Hi)$ and we have
				\begin{equation}\label{eq:Projection_spectral_propriété_de_projection}
					\mathds{1}_{I_1}(H) \mathds{1}_{I_2}(H)=\mathds{1}_{I_1\cap I_2}(H),
				\end{equation}
				for any closed intervals $I_1,I_2\subset I$ without spectral singularity, with the convention that $\mathds{1}_\emptyset(H)=0$. In particular, $\mathds{1}_I(H)$ is a projection. Its adjoint is given by 
				\begin{equation}\label{eq:Projection_spectral_propriété_d'adjonction}
					\mathds{1}_I(H)^*=\mathds{1}_I(H^*)=\wlim_{\varepsilon\rightarrow 0^+}\frac{1}{2\pi i}\int_I\big(\Res_{H^*}(\lambda+i\varepsilon)-\Res_{H^*}(\lambda-i\varepsilon)\big)\mathrm{d}\lambda. 
				\end{equation}
			\end{proposition}
			\begin{proof}
				See Appendix \ref{App:Operateurs_spectraux}.
			\end{proof}
			
We note the following representation formula which follows from our proof:
			\begin{align}
 \mathds{1}_I(H) &= \mathds{1}_I(H_0)  - \frac{1}{2i\pi} \int_I  \Res_{0}(\lambda\mp i0^+)CWC\Res_{0}(\lambda\pm i0^+)  \mathrm{d}\lambda \notag \\
				&\quad + \frac{1}{2i\pi} \int_I   \Res_0(\lambda\pm i0^+) CWC\Res_H(\lambda\pm i0^+)CWC\Res_0(\lambda\pm i0^+) \mathrm{d}\lambda, \label{eq:repres_1_I}
			\end{align}
in the sense of quadratic forms on $\Hi\times\Hi$. We recall that, for all $u\in\Hi$, $\lambda\mapsto C\Res_0(\lambda\pm i0^+)u$ denotes the limit of $\lambda\mapsto C\Res_0(\lambda\pm i\varepsilon)u$ in $L^2(\R;\Hi)$ as $\varepsilon\to0^+$, while, for a.e. $\lambda$, $C\Res_H(\lambda\pm i0^+)CW$ is the limit of $C\Res_H(\lambda\pm i\varepsilon)CW$ as $\varepsilon\to0^+$, in the norm topology  of $\mathcal{L}(\Hi)$.

Under the same assumptions and using similar arguments, we also have the following functional calculus. We denote by $\mathrm{C}_{\mathrm{b}}(I)$ the set of bounded continuous functions on $I$.
	\begin{proposition}\label{prop:functional_calculus}
				Under the conditions of Proposition \ref{prop:Projection_Spectrale_Stone}, the map
				\begin{align}\label{eq:algebra_morphism}
					\mathrm{C}_{\mathrm{b}}(I)\ni f&\mapsto f(H):= \wlim_{\varepsilon\rightarrow 0^+}\frac{1}{2\pi i}\int_If(\lambda)\big(\Res_{H}(\lambda+i\varepsilon)-\Res_{H}(\lambda-i\varepsilon)\big)\mathrm{d}\lambda\in\mathcal{L}(\Hi)
				\end{align}
				is a Banach algebra morphism. Moreover, for all $t\in\R$, 
				\begin{equation}\label{eq:Projection_spectral_calcul_fonctionnel_semi-group}
					e^{itH}\mathds{1}_I(H)=\wlim_{\varepsilon\rightarrow 0^+}\frac{1}{2\pi i}\int_I e^{it\lambda}\big(\Res_H(\lambda+i\varepsilon)-\Res_H(\lambda-i\varepsilon)\big)\mathrm{d}\lambda
				\end{equation}
				and for all $z_0\in\rho(H)$,
				\begin{equation}\label{eq:Projection_spectral_calcul_fonctionnel_resolvent}
					\Res_H(z_0)\mathds{1}_I(H)=\wlim_{\varepsilon\rightarrow 0^+}\frac{1}{2\pi i}\int_I (\lambda-z_0)^{-1}\big(\Res_H(\lambda+i\varepsilon)-\Res_H(\lambda-i\varepsilon)\big)\mathrm{d}\lambda.
				\end{equation}
			\end{proposition}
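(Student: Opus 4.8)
The plan is to set $\Phi(f):=f(H)$ and proceed in four stages: existence and boundedness of $\Phi$, the resolvent formula \eqref{eq:Projection_spectral_calcul_fonctionnel_resolvent}, multiplicativity via a resolvent subalgebra and Stone--Weierstrass, and finally the semigroup formula \eqref{eq:Projection_spectral_calcul_fonctionnel_semi-group}, which I expect to be the real obstacle since it has no resolvent-type shortcut. First I would insert the bounded multiplier $f(\lambda)$ into the representation formula \eqref{eq:repres_1_I} and rerun the estimates from the proof of Proposition \ref{prop:Projection_Spectrale_Stone}. Concretely, from the twice-iterated identity $\Res_H=\Res_0-\Res_0 CWC\Res_0+\Res_0 CWC\Res_H CWC\Res_0$ one obtains, in the sense of quadratic forms on $\Hi\times\Hi$, that $f(H)$ equals $\int_I f(\lambda)\,\mathrm{d}E_0(\lambda)$ (with $E_0$ the spectral measure of $H_0$, by Stone's formula for the self-adjoint $H_0$) plus two correction terms. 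These are controlled by the $L^2(\R;\Hi)$ bounds coming from the Kato-smoothness \eqref{eq:Kato_smooth}--\eqref{eq:Smooth_Ka65_01} of $C$ together with the uniform bound \eqref{eq:limit_uniform_I}; since $f$ enters only as a bounded factor, Fatou's theorem and dominated convergence yield the existence of the weak limit and the bound $\|f(H)\|_{\mathcal{L}(\Hi)}\le c\,\|f\|_{\mathrm{C}_{\mathrm{b}}(I)}$. Linearity is immediate, and $\Phi(1)=\mathds{1}_I(H)$ by \eqref{eq:Formule de Stone}.

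Next I would prove \eqref{eq:Projection_spectral_calcul_fonctionnel_resolvent}. For $z_0\in\rho(H)$, pulling the bounded operator $\Res_H(z_0)$ inside the weak limit and using the first resolvent identity $\Res_H(z_0)\Res_H(\lambda\pm i\varepsilon)=(\lambda\pm i\varepsilon-z_0)^{-1}\big(\Res_H(\lambda\pm i\varepsilon)-\Res_H(z_0)\big)$, the terms containing $\Res_H(z_0)$ produce a Poisson-kernel integral $\frac{1}{2\pi i}\int_I\big[(\lambda+i\varepsilon-z_0)^{-1}-(\lambda-i\varepsilon-z_0)^{-1}\big]\mathrm{d}\lambda$. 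For $z_0\notin I$ — in particular for every non-real $z_0$, since $I\subset\R$ — this tends to $0$; for real $z_0\in\mathring{I}\cap\rho(H)$ the jump $\Res_H(\lambda+i0^+)-\Res_H(\lambda-i0^+)$ vanishes near $\lambda=z_0$, so the surviving integrand is regular and the same computation applies. Replacing $(\lambda\pm i\varepsilon-z_0)^{-1}$ by $(\lambda-z_0)^{-1}$ in the remaining terms costs $\mathcal{O}(\varepsilon)$, controlled by the estimates of the first stage, and yields exactly $\Phi(r_{z_0})=\Res_H(z_0)\mathds{1}_I(H)$ with $r_{z_0}(\lambda)=(\lambda-z_0)^{-1}$.

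For multiplicativity, I would exploit that resolvents of $H$ commute, so $\Res_H(w)$ commutes with $\mathds{1}_I(H)$ for every $w\in\rho(H)$ (pass $\Res_H(w)$ through the weak limit in \eqref{eq:Formule de Stone}). Combined with $\mathds{1}_I(H)^2=\mathds{1}_I(H)$ (Proposition \ref{prop:Projection_Spectrale_Stone}) and the formula $\Phi(r_{z_0})=\Res_H(z_0)\mathds{1}_I(H)$, the partial-fraction identity $r_z r_w=(z-w)^{-1}(r_z-r_w)$ together with $\Res_H(z)\Res_H(w)=(z-w)^{-1}(\Res_H(z)-\Res_H(w))$ gives $\Phi(r_z r_w)=\Res_H(z)\Res_H(w)\mathds{1}_I(H)=\Phi(r_z)\Phi(r_w)$, and by induction $\Phi(r_{z_1}\cdots r_{z_k})=\Res_H(z_1)\cdots\Res_H(z_k)\mathds{1}_I(H)$. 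Hence $\Phi$ is multiplicative on the subalgebra $\mathcal{A}$ generated by $\{1\}\cup\{r_z:z\in\C\setminus\R\}$. Since $\mathcal{A}$ contains the constants, separates the points of $I$, and is stable under complex conjugation ($\overline{r_z}=r_{\bar z}$ on $\R$), the Stone--Weierstrass theorem shows it is dense in $\mathrm{C}(I)=\mathrm{C}_{\mathrm{b}}(I)$ (the interval $I$ being compact). As $\Phi$ and the product in $\mathcal{L}(\Hi)$ are continuous, multiplicativity extends from $\mathcal{A}$ to all of $\mathrm{C}_{\mathrm{b}}(I)$, so $\Phi$ is a Banach algebra morphism.

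Finally, for \eqref{eq:Projection_spectral_calcul_fonctionnel_semi-group} — the main difficulty, as $e^{itH}\Res_H(z)$ admits no closed algebraic identity — I would write $e_t(\lambda)=e^{it\lambda}\in\mathrm{C}_{\mathrm{b}}(I)$ and identify $\Phi(e_t)$ with $e^{itH}\mathds{1}_I(H)$ through Laplace transforms. The map $t\mapsto e_t$ is continuous into $\mathrm{C}_{\mathrm{b}}(I)$ with $\|e_t\|=1$, so for $\mathrm{Im}(z)<0$ the pointwise identity $r_z=-i\int_0^\infty e^{-izt}e_t\,\mathrm{d}t$ holds in $\mathrm{C}_{\mathrm{b}}(I)$; pulling the bounded map $\Phi$ through this Bochner integral gives $\Res_H(z)\mathds{1}_I(H)=\Phi(r_z)=-i\int_0^\infty e^{-izt}\Phi(e_t)\,\mathrm{d}t$. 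On the other hand, for $\mathrm{Im}(z)<-\|V\|_{\mathcal{L}(\Hi)}$ one has $z\in\rho(H)$ and $\Res_H(z)=-i\int_0^\infty e^{-izt}e^{itH}\,\mathrm{d}t$, whence $\Res_H(z)\mathds{1}_I(H)=-i\int_0^\infty e^{-izt}e^{itH}\mathds{1}_I(H)\,\mathrm{d}t$. Comparing for $\mathrm{Im}(z)<-\|V\|_{\mathcal{L}(\Hi)}$, the continuous $\mathcal{L}(\Hi)$-valued function $t\mapsto\Phi(e_t)-e^{itH}\mathds{1}_I(H)$, of exponential growth at most $e^{t\|V\|_{\mathcal{L}(\Hi)}}$, has vanishing Laplace transform; testing against vectors $u,v\in\Hi$ and invoking injectivity of the scalar Laplace transform forces it to vanish for all $t\ge0$, and the analogous representation on $(-\infty,0]$ (convergent for $\mathrm{Im}(z)>\|V\|_{\mathcal{L}(\Hi)}$) handles $t\le0$. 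The delicate points here are precisely the interchange of $\Phi$ with the $t$-integral and the Laplace uniqueness, both of which are justified by the uniform bounds established in the first stage.
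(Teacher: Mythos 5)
Your first stage coincides with the paper's own argument: the paper proves Propositions \ref{prop:Projection_Spectrale_Stone} and \ref{prop:functional_calculus} simultaneously by inserting the bounded factor $f(\lambda)$ into the twice-iterated resolvent identity and rerunning the Kato-smoothness and \eqref{eq:limit_uniform_I} estimates, exactly as you propose; this yields \eqref{eq:repres_1_I} with $f$ inserted and the bound $\|f(H)\|_{\mathcal{L}(\Hi)}\le c\|f\|_\infty$. For the remaining assertions the paper is terse: multiplicativity is said to follow ``by the same argument'', and \eqref{eq:Projection_spectral_calcul_fonctionnel_semi-group}, \eqref{eq:Projection_spectral_calcul_fonctionnel_resolvent} are deferred to \cite{FaFr18_01}, where they are obtained by direct computation of products of the $\varepsilon$-regularized Stone integrals via the first resolvent identity and Poisson-kernel limits. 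Your route is genuinely different there: you first prove $\Phi(r_{z_0})=\Res_H(z_0)\mathds{1}_I(H)$, deduce multiplicativity on the algebra generated by $\{1\}\cup\{r_z\}$ from the partial-fraction and resolvent identities, extend by Stone--Weierstrass, and get the semigroup formula from uniqueness of Laplace transforms. This avoids all double-integral manipulations and is more self-contained than the paper's citation of \cite{FaFr18_01}; its cost is that it leans on topological properties of $\mathrm{C}_{\mathrm{b}}(I)$.

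That cost is where the genuine gap lies: you assume $I$ is compact, but the proposition is stated for closed intervals $I\subset\R$, and unbounded intervals are squarely within the intended scope (the remark after Proposition \ref{prop:resolution} takes $I=\sigma_{\mathrm{ess}}(H)$ when there are no spectral singularities, and the structurally identical regularized version, Proposition \ref{prop:functional_calculus_reg}, is applied to $[e_p+\delta/2,\infty)$ in the proof of Proposition \ref{prop:resolution}). On an unbounded $I$ both of your key tools fail. First, the sup-norm closure of the algebra generated by $\{1\}\cup\{r_z\}$ is $\C\cdot 1\oplus \mathrm{C}_0(I)$ (Stone--Weierstrass on the one-point compactification), which is strictly smaller than $\mathrm{C}_{\mathrm{b}}(I)$; in particular $e_t$ is not in this closure, so multiplicativity for general $f,g\in\mathrm{C}_{\mathrm{b}}(I)$ does not follow. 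Second, $t\mapsto e_t$ is not norm-continuous into $\mathrm{C}_{\mathrm{b}}(I)$ when $I$ is unbounded ($\|e_t-e_s\|_\infty=2$ for $t\neq s$), so the Bochner integral $r_z=-i\int_0^\infty e^{-izt}e_t\,\mathrm{d}t$ does not exist in $\mathrm{C}_{\mathrm{b}}(I)$ and you cannot pull $\Phi$ through it. The semigroup step can be repaired weakly without compactness: write $r_z(\lambda)$ as a scalar Laplace integral inside $\langle u,\cdot\,v\rangle$ at fixed $\varepsilon$, apply Fubini, and pass to the limit $\varepsilon\to0^+$ by dominated convergence in $t$, using your stage-one uniform bound $|\langle u,\Phi(e_t)v\rangle|\le c\|u\|\|v\|$; but multiplicativity on unbounded $I$ really requires the paper's direct computation (or some additional argument), since the density step is irreparably false there.

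Two smaller points in your second stage. For real $z_0\in\mathring I\cap\rho(H)$ your claim that ``the same computation applies'' is wrong as stated: the Poisson term $\frac{1}{2\pi i}\int_I[(\lambda+i\varepsilon-z_0)^{-1}-(\lambda-i\varepsilon-z_0)^{-1}]\mathrm{d}\lambda$ then converges to $-1$, not $0$, and the integrand $(\lambda-z_0)^{-1}(\Res_H(\lambda+i\varepsilon)-\Res_H(\lambda-i\varepsilon))$ needs principal-value care at $\lambda=z_0$; this case must be handled by splitting $I$ into a small subinterval around $z_0$ (where the jump is $\mathcal{O}(\varepsilon)$, since $z_0\in\rho(H)$) and its complement, on which your clean argument applies. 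Also, the error in replacing $(\lambda\pm i\varepsilon-z_0)^{-1}$ by $(\lambda-z_0)^{-1}$ is not $\mathcal{O}(\varepsilon)$ termwise: in weak form it is controlled, via Cauchy--Schwarz and the quadratic bound $\int_I\|\Res_H(\lambda\pm i\varepsilon)v\|_\Hi^2\,\mathrm{d}\lambda\le \mathrm{c}\varepsilon^{-1}\|v\|_\Hi^2$ of Lemma \ref{lm:res_bound}\ref{it:res_bound2}, by $\mathcal{O}(\varepsilon^{1/2})$ --- which still vanishes, so the conclusion stands, but the justification needs that lemma and not only your first-stage estimates.
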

			\begin{proof}
				See Appendix \ref{App:Operateurs_spectraux}.
			\end{proof}

We mention that this functional calculus uniquely extends to a Borel functional calculus, i.e. an algebra 	morphism $L^\infty(I)\ni f\to f(H)$. See e.g. \cite[Theorem 2.4]{GeGeHa13_01}.

\subsection{`Regularized' functional calculus}\label{subsec:regul_funct_calc}

			Our next concern is to regularize the definition \eqref{eq:Formule de Stone} in the case where the spectral interval $I$ contains spectral singularities. More generally, we will now consider a closed interval $I\subset\mathbb{R}$ and a bounded holomorphic function 
			\begin{equation*}
				h:U\to\C, \quad \{z\in\C,\, \mathrm{Re}(z)\in I , |\mathrm{Im}(z)|\le\varepsilon_0\}\subset U,
			\end{equation*}
			with $\varepsilon_0>0$ and $U$ open, such that
					\begin{equation}\label{eq:limit_uniform_I_sing}
			\sup_{\mathrm{Re}(z)\in I, \pm\mathrm{Im}(z)\in(0,\varepsilon_0)} |h(z)| \big\|C\Res_H(z)CW\big\|_{\mathcal{L}(\Hi)}<\infty.
		\end{equation}
We will also assume that
\begin{equation}\label{eq:h_assumpt}
\lambda\mapsto\sup_{0<\varepsilon<\varepsilon_0}|h'(\lambda\pm i\varepsilon)| \in L^2(I),
\end{equation}
where $h'$ stands for the derivative of $h$. The `regularized spectral projection' for $H$ in $I$ is then defined by
	\begin{equation}\label{eq:Formule de Stone_regul}
		\big(h \mathds{1}_I\big)(H):=\wlim_{\varepsilon\rightarrow 0^+}\frac{1}{2\pi i}\int_I\big(h(\lambda+i\varepsilon)\Res_H(\lambda+i\varepsilon)-h(\lambda-i\varepsilon)\Res_H(\lambda-i\varepsilon)\big)\mathrm{d}\lambda. 
	\end{equation}
In this context, Proposition \ref{prop:Projection_Spectrale_Stone} should be modified as follows.

			\begin{proposition}\label{prop:Projection_Spectrale_Stone_regul}
				Suppose that Hypothesis \ref{hyp:principe_absorption_limite} holds.  Let $I\subset\mathbb{R}$ be a closed interval. Suppose that there exist $\varepsilon_0>0$ and a bounded holomorphic function $h$ defined on a complex neighborhood of $\{z\in\C,\, \mathrm{Re}(z)\in I , |\mathrm{Im}(z)|\le\varepsilon_0\}$ such that \eqref{eq:limit_uniform_I_sing} and \eqref{eq:h_assumpt} hold. Then the weak limit  \eqref{eq:Formule de Stone_regul} exists in $\mathcal{L}(\Hi)$ and its adjoint is given by
				\begin{equation*}
					\big(h\mathds{1}_I\big)(H)^*=\big(\overline{h}\mathds{1}_I\big)(H^*)=\wlim_{\varepsilon\rightarrow 0^+}\frac{1}{2\pi i}\int_I\big(\overline{h(\lambda+i\varepsilon)}\Res_{H^*}(\lambda+i\varepsilon)-\overline{h(\lambda-i\varepsilon)}\Res_{H^*}(\lambda-i\varepsilon)\big)\mathrm{d}\lambda. 
				\end{equation*}
			\end{proposition}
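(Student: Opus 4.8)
The plan is to follow the scheme of Proposition \ref{prop:Projection_Spectrale_Stone} (in particular the representation \eqref{eq:repres_1_I}), now carrying along the holomorphic weight $h$, and to isolate the one genuinely new difficulty: the blow-up of $C\Res_H(z)CW$ at the spectral singularities contained in $I$, which is exactly what $h$ is designed to regularize. Testing the integrand in \eqref{eq:Formule de Stone_regul} against arbitrary $u,v\in\Hi$, I would first expand the resolvent by iterating the second resolvent identity, writing
\[
\Res_H(z)=\Res_0(z)-\Res_0(z)CWC\Res_0(z)+\Res_0(z)CWC\Res_H(z)CWC\Res_0(z).
\]
Substituting this into \eqref{eq:Formule de Stone_regul} splits $\langle (h\mathds{1}_I)(H)u,v\rangle$ into a free term $T^{(0)}_\varepsilon$, a first-order term $T^{(1)}_\varepsilon$ and a second-order term $T^{(2)}_\varepsilon$, treated separately.

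For $T^{(0)}_\varepsilon$ I would use the spectral theorem for the self-adjoint operator $H_0$, writing $\langle\Res_0(\lambda\pm i\varepsilon)u,v\rangle$ as a Cauchy integral against the complex spectral measure $\mathrm{d}\langle E_0(\mu)u,v\rangle$. Because $h$ is holomorphic and bounded on the strip $\{\mathrm{Re}(z)\in I,\ |\mathrm{Im}(z)|\le\varepsilon_0\}$, and because \eqref{eq:Formule de Stone_regul} pairs $h(\lambda+i\varepsilon)$ with $\Res_0(\lambda+i\varepsilon)$ and $h(\lambda-i\varepsilon)$ with $\Res_0(\lambda-i\varepsilon)$, the inner $\lambda$-integral closes into a rectangular contour whose residue produces $h(\mu)\mathds{1}_{\mathrm{int}(I)}(\mu)$, the two vertical edges contributing $O(\varepsilon)$ boundary terms; dominated convergence in $\mu$ then gives $T^{(0)}_\varepsilon\to\langle(\int_I h\,\mathrm{d}E_0)u,v\rangle$, bounded by $\|h\|_\infty\|u\|\|v\|$. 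For $T^{(1)}_\varepsilon$ I would move the outer factors onto the test vectors, reducing each $\pm$ piece to $\mp\tfrac{1}{2\pi i}\int_I h(\lambda\pm i\varepsilon)\langle W C\Res_0(\lambda\pm i\varepsilon)u,\,C\Res_0(\lambda\mp i\varepsilon)v\rangle\,\mathrm{d}\lambda$; since $C\Res_0(\lambda\pm i\varepsilon)u$ and $C\Res_0(\lambda\mp i\varepsilon)v$ converge in $L^2(I;\Hi)$ by the Kato-smoothness estimate \eqref{eq:Smooth_Ka65_01} and $h(\lambda\pm i\varepsilon)\to h(\lambda)$ boundedly, a product-of-$L^2$-limits argument yields convergence with bound $c_0^2\|h\|_\infty\|W\|\|u\|\|v\|$.

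The crux is $T^{(2)}_\varepsilon$, whose $\pm$ pieces take the form $\pm\tfrac{1}{2\pi i}\int_I\langle A^{\pm}_\varepsilon(\lambda)\,C\Res_0(\lambda\pm i\varepsilon)u,\,W^*C\Res_0(\lambda\mp i\varepsilon)v\rangle\,\mathrm{d}\lambda$ with the middle factor $A^{\pm}_\varepsilon(\lambda):=h(\lambda\pm i\varepsilon)C\Res_H(\lambda\pm i\varepsilon)CW$. Here hypothesis \eqref{eq:limit_uniform_I_sing} supplies the uniform bound $\|A^{\pm}_\varepsilon(\lambda)\|\le M$, while Fatou's theorem applied to this bounded holomorphic $\mathcal{L}(\Hi)$-valued function furnishes boundary values $A^{\pm}_0(\lambda)$ for a.e.\ $\lambda\in I$ (in the weak operator topology). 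I would then run a three-factor argument: estimate $T^{(2)}_\varepsilon-T^{(2)}_0$ by successively replacing the two $L^2(I;\Hi)$-convergent outer factors (each replacement controlled by $M$ and \eqref{eq:Smooth_Ka65_01} through Cauchy-Schwarz), leaving $\int_I\langle (A^{\pm}_\varepsilon-A^{\pm}_0)(\lambda)\,C\Res_0(\lambda\pm i0^+)u,\,W^*C\Res_0(\lambda\mp i0^+)v\rangle\,\mathrm{d}\lambda$, which vanishes by scalar dominated convergence, dominated by $\|C\Res_0(\lambda\pm i0^+)u\|\,\|C\Res_0(\lambda\mp i0^+)v\|\in L^1(I)$ against the a.e.\ pointwise convergence $A^{\pm}_\varepsilon(\lambda)\to A^{\pm}_0(\lambda)$. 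This gives the bound $(M\|W\|/2\pi)c_0^2\|u\|\|v\|$; combining the three terms shows the weak limit \eqref{eq:Formule de Stone_regul} exists and defines a bounded operator. I expect this second-order term to be the main obstacle, since it is the only place where the singularities enter and where the regularizing role of $h$ is essential.

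For the adjoint I would take the adjoint inside \eqref{eq:Formule de Stone_regul} and use $\Res_H(z)^*=\Res_{H^*}(\bar z)$, which gives $(h\mathds{1}_I)(H)^*$ as the weak limit of $\tfrac{1}{2\pi i}\int_I\big(\overline{h(\lambda-i\varepsilon)}\Res_{H^*}(\lambda+i\varepsilon)-\overline{h(\lambda+i\varepsilon)}\Res_{H^*}(\lambda-i\varepsilon)\big)\,\mathrm{d}\lambda$. Writing $\overline{h}(z):=\overline{h(\bar z)}$ (holomorphic and bounded, and satisfying \eqref{eq:limit_uniform_I_sing} for $C\Res_{H^*}(z)CW^*$ by taking adjoints in the bound for $H$), this is precisely $(\overline{h}\mathds{1}_I)(H^*)$ in the matched form, whose existence follows from the first part applied to $(H^*,W^*,\overline{h})$. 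It remains to show this coincides with the expression stated in the proposition, which uses the swapped pairing; the difference of the two integrands equals $\tfrac{1}{2\pi i}\int_I\big(\overline{h(\lambda+i\varepsilon)}-\overline{h(\lambda-i\varepsilon)}\big)\big(\Res_{H^*}(\lambda+i\varepsilon)+\Res_{H^*}(\lambda-i\varepsilon)\big)\,\mathrm{d}\lambda$. Decomposing $\Res_{H^*}$ as above, the weighted (first- and second-order) contributions vanish in the limit because the factor $\overline{h(\lambda+i\varepsilon)}-\overline{h(\lambda-i\varepsilon)}\to0$ boundedly while the weighted resolvents converge in $L^2(I;\Hi)$; the free contribution is where hypothesis \eqref{eq:h_assumpt} is used, via $|\overline{h(\lambda+i\varepsilon)}-\overline{h(\lambda-i\varepsilon)}|\le 2\varepsilon\sup_{0<s<\varepsilon_0}|h'(\lambda\pm is)|$ together with the elementary estimate $\|\Res_0(\cdot\pm i\varepsilon)u\|_{L^2(I;\Hi)}\le\sqrt{\pi/\varepsilon}\,\|u\|$, giving an $O(\varepsilon^{1/2})$ bound by Cauchy-Schwarz. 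This establishes the adjoint identity and completes the proof.
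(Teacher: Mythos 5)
Your proof of the existence of the weak limit \eqref{eq:Formule de Stone_regul} is correct and follows the paper's own scheme: the same expansion of $\Res_H(\lambda\pm i\varepsilon)$ by the second resolvent identity, the same use of the $L^2(I;\Hi)$-convergence of $\lambda\mapsto C\Res_0(\lambda\pm i\varepsilon)u$ (Kato smoothness, \eqref{eq:Smooth_Ka65_01}), and the same Fatou-plus-dominated-convergence argument for the second-order term, exploiting that the \emph{matched} products $h(\lambda\pm i\varepsilon)\,C\Res_H(\lambda\pm i\varepsilon)CW$ are uniformly bounded by \eqref{eq:limit_uniform_I_sing}. The one real divergence is the free term: you obtain $(h\mathds{1}_I)(H_0)$ by closing a rectangular contour and using holomorphy of $h$, whereas the paper splits $h(\lambda\pm i\varepsilon)=h(\lambda)+[h(\lambda\pm i\varepsilon)-h(\lambda)]$ and controls the error with the mean value theorem, hypothesis \eqref{eq:h_assumpt} and the bound \eqref{eq:int_res_bound}. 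Both routes work (your vertical-edge estimate is not uniformly $O(\varepsilon)$ near the endpoints of $I$, but since $H_0$ has purely absolutely continuous spectrum this is absorbed by the dominated convergence you invoke).

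The adjoint identity is where you go beyond the paper, and where the gap is. You are right that taking adjoints in \eqref{eq:Formule de Stone_regul} yields the \emph{matched} pairing $\overline{h(\lambda-i\varepsilon)}\,\Res_{H^*}(\lambda+i\varepsilon)-\overline{h(\lambda+i\varepsilon)}\,\Res_{H^*}(\lambda-i\varepsilon)$, while the proposition asserts the \emph{swapped} pairing, so that a genuine argument is required; the paper is silent on this point. Your treatment of the free part of the difference (mean value theorem, \eqref{eq:h_assumpt}, Cauchy-Schwarz with the $\varepsilon^{-1/2}$ resolvent bound) and of the first-order part is correct. The second-order part, however, is not justified by ``the factor tends to $0$ boundedly while the weighted resolvents converge in $L^2$'': that reasoning never touches the middle factor $C\Res_{H^*}(\lambda\pm i\varepsilon)CW^*$, which is unbounded as $\varepsilon\to0^+$ near the spectral singularities. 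What must be dominated is $\big(\overline{h(\lambda-i\varepsilon)}-\overline{h(\lambda+i\varepsilon)}\big)\,C\Res_{H^*}(\lambda+i\varepsilon)CW^*$ (and its reflection), i.e.\ one needs a uniform bound on the \emph{mismatched} product $|h(\lambda+i\varepsilon)|\,\|C\Res_H(\lambda-i\varepsilon)CW\|_{\mathcal{L}(\Hi)}$. This does not follow from \eqref{eq:limit_uniform_I_sing}, which only bounds $|h(\lambda-i\varepsilon)|\,\|C\Res_H(\lambda-i\varepsilon)CW\|_{\mathcal{L}(\Hi)}$: the hypotheses allow $h$ to vanish at a non-real point $z_1$ of the strip at which $\Res_H$ has a pole (this is exactly how \eqref{eq:limit_uniform_I_sing} can accommodate eigenvalues of $H$ inside the strip), and then $|h(\bar z)|\,\|C\Res_H(z)CW\|\to\infty$ as $z\to z_1$. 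The repair is to note that whenever the zeros of $h$ in the strip are real --- as for the regularizer $\tilde r$ of \eqref{eq:def_tilde_r}, whose zeros are the spectral singularities $\lambda_j$ --- one has $|h(\lambda+i\varepsilon)|\le c\,|h(\lambda-i\varepsilon)|$ (write $h(z)=(z-\lambda_j)^{m}g(z)$ with $g$ non-vanishing and use $|\lambda-\lambda_j+i\varepsilon|=|\lambda-\lambda_j-i\varepsilon|$), so matched and mismatched products are comparable; without such an argument the step, as written, is a gap. A smaller imprecision of the same kind: taking adjoints in \eqref{eq:limit_uniform_I_sing} bounds $W^*C\Res_{H^*}(z)C$, not $C\Res_{H^*}(z)CW^*$; to move $W^*$ across one needs the identity $WC\Res_H(z)C=WC\Res_0(z)C-WC\Res_H(z)CW\,C\Res_0(z)C$ together with Hypothesis \ref{hyp:principe_absorption_limite}, in the spirit of Section \ref{sec:spec_sing_adjoint}.
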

			\begin{proof}
				See Appendix \ref{App:Operateurs_spectraux}.
			\end{proof}

It is not difficult to verify that, if $H$ is self-adjoint, then $(h \mathds{1}_I)(H)=h(H)\mathds{1}_I(H)$ (see the proof of Proposition \ref{prop:Projection_Spectrale_Stone_regul}). However, under the assumptions of Proposition \ref{prop:Projection_Spectrale_Stone_regul}, this formula does \textit{not} make sense since $\mathds{1}_I(H)$ is ill-defined in general.

Similarly as in \eqref{eq:repres_1_I}, our proof will show that
			\begin{align}
(h \mathds{1}_I)(H) &= (h\mathds{1}_I)(H_0)  - \frac{1}{2i\pi} \int_I h(\lambda) \Res_{0}(\lambda\mp i0^+)CWC\Res_{0}(\lambda\pm i0^+)  \mathrm{d}\lambda \notag \\
				&\quad + \frac{1}{2i\pi} \int_I h(\lambda)  \Res_0(\lambda\pm i0^+) CWC\Res_H(\lambda\pm i0^+)CWC\Res_0(\lambda\pm i0^+) \mathrm{d}\lambda, \label{eq:repres_1_I_h}
			\end{align}
in the sense of quadratic forms on $\Hi\times\Hi$.

One can also define a `regularized functional calculus' on the set of functions 
\begin{equation*}
	\mathrm{C}_{\mathrm{b},\mathrm{reg}}(I):=\big\{ f:I\to\C , \, \exists g\in \mathrm{C}_{\mathrm{b}}(I) , \, f=hg \}.
\end{equation*}

	\begin{proposition}\label{prop:functional_calculus_reg}
				Under the conditions of Proposition \ref{prop:Projection_Spectrale_Stone_regul}, the map
				\begin{align*}
					\mathrm{C}_{\mathrm{b},\mathrm{reg}}(I)\ni f&\mapsto f(H):= \wlim_{\varepsilon\rightarrow 0^+}\frac{1}{2\pi i}\int_Ig(\lambda)\big(h(\lambda+i\varepsilon)\Res_{H}(\lambda+i\varepsilon)-h(\lambda-i\varepsilon)\Res_{H}(\lambda-i\varepsilon)\big)\mathrm{d}\lambda\in\mathcal{L}(\Hi)
				\end{align*}
				is an algebra morphism and there exists $\mathrm{c}>0$ such that
				\begin{equation}\label{eq:algebra_morph_regul}
					\|f(H)\|_{\mathcal{L}(\Hi)} \le \mathrm{c} \|g\|_{L^\infty},
				\end{equation}
				for all $f \in \mathrm{C}_{\mathrm{b},\mathrm{reg}}(I)$, with $f=hg$.
				Moreover, for all $t\in\R$, 
				\begin{equation}\label{eq:Projection_spectral_calcul_fonctionnel_semi-group_regul}
					e^{itH}\big(h\mathds{1}_I\big)(H)=\wlim_{\varepsilon\rightarrow 0^+}\frac{1}{2\pi i}\int_I e^{it\lambda}\big(h(\lambda+i\varepsilon)\Res_H(\lambda+i\varepsilon)-h(\lambda-i\varepsilon)\Res_H(\lambda-i\varepsilon)\big)\mathrm{d}\lambda
				\end{equation}
				and for all $z_0\in\rho(H)$,
				\begin{equation}\label{eq:Projection_spectral_calcul_fonctionnel_resolvent_regul}
					\Res_H(z_0)\big(h\mathds{1}_I\big)(H)=\wlim_{\varepsilon\rightarrow 0^+}\frac{1}{2\pi i}\int_I (\lambda-z_0)^{-1} \big(h(\lambda+i\varepsilon)\Res_H(\lambda+i\varepsilon)-h(\lambda-i\varepsilon)\Res_H(\lambda-i\varepsilon)\big)\mathrm{d}\lambda.
				\end{equation}
			\end{proposition}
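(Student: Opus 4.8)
The plan is to mirror the proof of Proposition~\ref{prop:Projection_Spectrale_Stone_regul}, treating the continuous bounded factor $g$ as an $L^\infty$-multiplier that rides along inside the regularized Stone integral. First I would establish existence of the weak limit defining $f(H)$ together with a representation formula: inserting the symmetric second resolvent expansion
\begin{equation*}
\Res_H(z)=\Res_0(z)-\Res_0(z)CWC\Res_0(z)+\Res_0(z)CWC\Res_H(z)CWC\Res_0(z)
\end{equation*}
into the defining integral of $f(H)$ yields exactly formula \eqref{eq:repres_1_I_h} with the scalar weight $h(\lambda)$ replaced throughout by $h(\lambda)g(\lambda)$ and $(h\mathds{1}_I)(H_0)$ replaced by $(hg\,\mathds{1}_I)(H_0)$. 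Convergence of the three resulting terms is obtained precisely as in Proposition~\ref{prop:Projection_Spectrale_Stone_regul}: the first from the spectral theorem for $H_0$, the last two from Cauchy--Schwarz in $\lambda$ together with the Kato-smoothness bound \eqref{eq:Smooth_Ka65_01} for $C\Res_0(\lambda\pm i0^+)u$ and the uniform bound \eqref{eq:limit_uniform_I_sing} on $h(\lambda)C\Res_H(\lambda\pm i0^+)CW$, the derivative terms being controlled by \eqref{eq:h_assumpt}. Reading the norm of each term off this formula and using $\norme{(hg\,\mathds{1}_I)(H_0)}_{\mathcal{L}(\Hi)}\le\norme{h}_{L^\infty(I)}\norme{g}_{L^\infty(I)}$ gives the estimate \eqref{eq:algebra_morph_regul}, since $g$ enters each term only as a bounded multiplier and the remaining constants depend on $h$, $W$ and the Kato constant alone.

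The main obstacle is the multiplicativity $f_1(H)f_2(H)=(f_1f_2)(H)$ for $f_i=hg_i$, which I would prove as in the singularity-free case of Proposition~\ref{prop:functional_calculus}, now carrying the holomorphic weights $h(\cdot\pm i\varepsilon)$ through the computation. Expanding the product of the two regularized Stone integrals into four double integrals of the type
\begin{equation*}
\frac{1}{(2\pi i)^2}\int_I\int_I h(\lambda\pm i\varepsilon)h(\mu\pm i\varepsilon')g_1(\lambda)g_2(\mu)\Res_H(\lambda\pm i\varepsilon)\Res_H(\mu\pm i\varepsilon')\,\mathrm{d}\mu\,\mathrm{d}\lambda,
\end{equation*}
I would apply the resolvent identity $\Res_H(z_1)\Res_H(z_2)=(z_1-z_2)^{-1}(\Res_H(z_1)-\Res_H(z_2))$ to reduce each to single integrals against the Cauchy kernel $((\lambda\pm i\varepsilon)-(\mu\pm i\varepsilon'))^{-1}$. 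Letting $\varepsilon,\varepsilon'\to0^+$ and invoking the Sokhotski--Plemelj behaviour of the Cauchy kernel, the off-diagonal principal-value contributions of opposite-sign terms cancel, while the diagonal $\lambda=\mu$ contribution produces the weight $h(\lambda)^2g_1(\lambda)g_2(\lambda)=h(\lambda)g(\lambda)$ with $g=hg_1g_2$, which is precisely the integrand defining $(f_1f_2)(H)$. The holomorphy of $h$ guarantees $h(\lambda\pm i\varepsilon)\to h(\lambda)$ uniformly on $I$, so no spurious boundary term survives, and the hypotheses \eqref{eq:limit_uniform_I_sing} and \eqref{eq:h_assumpt} are exactly what render all resolvent integrals absolutely convergent and control the $h'$-terms arising when the kernel singularity is resolved. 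This diagonal extraction in the presence of the merely regularized resolvent $\Res_H$ is the delicate point; once justified for the quadratic forms $\scal{f_1(H)^*u}{f_2(H)v}$ on a dense set, the algebra morphism property and linearity follow.

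Finally, the formulas \eqref{eq:Projection_spectral_calcul_fonctionnel_semi-group_regul} and \eqref{eq:Projection_spectral_calcul_fonctionnel_resolvent_regul} are obtained by identifying $e^{itH}(h\mathds{1}_I)(H)$ and $\Res_H(z_0)(h\mathds{1}_I)(H)$ with $f(H)$ for the bounded continuous choices $g(\lambda)=e^{it\lambda}$ and $g(\lambda)=(\lambda-z_0)^{-1}$ (recall $z_0\in\rho(H)$ and, in the relevant applications, $z_0\in\C\setminus\R$, so $g$ is smooth and bounded on $I$). For the resolvent formula I would insert the resolvent identity $\Res_H(z_0)\Res_H(\lambda\pm i\varepsilon)=((\lambda\pm i\varepsilon)-z_0)^{-1}(\Res_H(\lambda\pm i\varepsilon)-\Res_H(z_0))$ into \eqref{eq:Formule de Stone_regul}, using that $\Res_H(z_0)$ commutes with the weak limit; the $\lambda$-independent $\Res_H(z_0)$-part then contributes the scalar integral of $h(\lambda+i\varepsilon)((\lambda+i\varepsilon)-z_0)^{-1}-h(\lambda-i\varepsilon)((\lambda-i\varepsilon)-z_0)^{-1}$, which tends to $0$ by dominated convergence and the continuity of $h$ across the real axis, leaving exactly \eqref{eq:Projection_spectral_calcul_fonctionnel_resolvent_regul}. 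For the semigroup formula I would argue analogously, using that $e^{itH}$ commutes with the regularized Stone integral and the Duhamel identity
\begin{equation*}
(e^{itH}-e^{it\lambda})\Res_H(\lambda\pm i\varepsilon)=i\int_0^t e^{isH}e^{i(t-s)(\lambda\pm i\varepsilon)}\,\mathrm{d}s+(e^{it(\lambda\pm i\varepsilon)}-e^{it\lambda})\Res_H(\lambda\pm i\varepsilon)
\end{equation*}
to show that both error contributions vanish in the weak limit: the first (entire) part because its $\pm$ terms cancel as $\varepsilon\to0^+$, and the second because its $O(\varepsilon)$ prefactor tames the resolvent when tested against the representation formula of the first paragraph.
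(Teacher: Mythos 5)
Your proposal is correct and follows essentially the same route as the paper: existence of the weak limit and the bound \eqref{eq:algebra_morph_regul} come from the twice-iterated resolvent identity, Kato smoothness of $C$ (\eqref{eq:Smooth_Ka65_01}) and the uniform bound \eqref{eq:limit_uniform_I_sing}, with the mean-value/Cauchy--Schwarz argument (using \eqref{eq:h_assumpt} and the $\varepsilon^{-1/2}$ bound on $\int_I\|\Res_0(\lambda\pm i\varepsilon)v\|^2\mathrm{d}\lambda$) absorbing the $h(\lambda\pm i\varepsilon)-h(\lambda)$ errors, exactly as in the paper's appendix. The multiplicativity and the commutation formulas \eqref{eq:Projection_spectral_calcul_fonctionnel_semi-group_regul}--\eqref{eq:Projection_spectral_calcul_fonctionnel_resolvent_regul}, which the paper disposes of by referring to the analogous computations in \cite{FaFr18_01}, are treated by you via the standard resolvent-identity/Cauchy-kernel and Duhamel arguments, which is precisely the content of that reference, so no genuinely different method is involved.
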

			\begin{proof}
				See Appendix \ref{App:Operateurs_spectraux}.
			\end{proof}
			
Again, this functional calculus uniquely extends to a Borel functional calculus. For other definitions of functional calculi for general operators on Banach spaces under an assumption of polynomial growth of the resolvent near the real axis, we refer to \cite{Da95_01,GeGeHa13_01}.
			
\subsection{Spectral resolution formula}\label{subsec:spectr_res}
We now turn to the proof of the resolution formula stated in Proposition \ref{prop:resolution}. It relies in particular on the following resolvent bounds.
\begin{lemma}\label{lm:res_bound}
Suppose that Hypothesis \ref{hyp:principe_absorption_limite} holds.  Let $I\subset\mathbb{R}$ be a closed interval and suppose that there exists $\varepsilon_0>0$ such that \eqref{eq:limit_uniform_I} holds. 
\begin{enumerate}[label=(\roman*)]
\item\label{it:res_bound1} There exists $\mathrm{c}>0$ such that, for a.e. $\lambda\in I$, for all $\varepsilon\in(0,\varepsilon_0)$,
\begin{equation*}
\|R_H(\lambda\pm i\varepsilon)\|_{\mathcal{L}(\Hi)}\le\mathrm{c}\varepsilon^{-1}.
\end{equation*}
\item\label{it:res_bound2} There exists $\mathrm{c}>0$ such that, for all $\varepsilon\in(0,\varepsilon_0)$, for all $u\in\Hi$,
\begin{equation*}
\int_I \|R_H(\lambda\pm i\varepsilon) u\|_\Hi^2\mathrm{d}\lambda \le \mathrm{c}\varepsilon^{-1} \|u\|^2_\Hi.
\end{equation*}
\end{enumerate}
\end{lemma}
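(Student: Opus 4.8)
The plan is to reduce both bounds to the second resolvent identity combined with the Birman--Schwinger factorization of $\Res_H$, and then to balance the $\varepsilon^{-1/2}$ decay of the one-sided weighted resolvent against the Kato-smoothness $L^2$ bound. First I would observe that \eqref{eq:limit_uniform_I} presupposes $\Res_H(z)\in\mathcal{L}(\Hi)$ for every $z$ with $\mathrm{Re}(z)\in I$ and $\pm\mathrm{Im}(z)\in(0,\varepsilon_0)$, so the whole open strip lies in $\rho(H)$. On it, writing $\Res_H(z)=\Res_0(z)-\Res_0(z)CWC\Res_H(z)$ and multiplying on the left by $C$ gives $C\Res_H(z)=(\Id+C\Res_0(z)CW)^{-1}C\Res_0(z)$, whence
\begin{equation*}
\Res_H(z)=\Res_0(z)-\Res_0(z)CW\,(\Id+C\Res_0(z)CW)^{-1}C\Res_0(z).
\end{equation*}
Right-multiplying the expression for $C\Res_H(z)$ by $CW$ and rearranging also yields the identity $(\Id+C\Res_0(z)CW)^{-1}=\Id-C\Res_H(z)CW$; this is the crucial point, since it converts the a priori uncontrolled inverse into a quantity bounded directly by \eqref{eq:limit_uniform_I}, giving $\norme{(\Id+C\Res_0(z)CW)^{-1}}_{\mathcal{L}(\Hi)}\le 1+M'$ uniformly on the strip, where $M'$ denotes the supremum in \eqref{eq:limit_uniform_I}.

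Next I would collect the scalar inputs. Self-adjointness of $H_0$ gives $\norme{\Res_0(\lambda\pm i\varepsilon)}_{\mathcal{L}(\Hi)}\le\varepsilon^{-1}$ and, by Plancherel applied to $\Res_0(z)u=i\int_0^\infty e^{itz}e^{-itH_0}u\,\mathrm{d}t$ for $\mathrm{Im}(z)>0$, the bound $\int_\R\norme{\Res_0(\lambda\pm i\varepsilon)u}_\Hi^2\,\mathrm{d}\lambda=\pi\varepsilon^{-1}\norme{u}_\Hi^2$. For the one-sided weighted resolvent I would use the identity $\norme{\Res_0(\lambda+i\varepsilon)Cu}_\Hi^2=\varepsilon^{-1}\im\scal{u}{C\Res_0(\lambda+i\varepsilon)Cu}_\Hi$ together with Hypothesis \ref{hyp:principe_absorption_limite} to get $\norme{\Res_0(\lambda\pm i\varepsilon)C}_{\mathcal{L}(\Hi)}\le c_0\varepsilon^{-1/2}$, uniformly in $\lambda,\varepsilon$ (this is Lemma \ref{lm:tech_est}), and by adjunction the same for $C\Res_0(\lambda\pm i\varepsilon)$. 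Finally, Plancherel applied to the same representation yields the uniform Kato-smoothness bound $\int_\R\norme{C\Res_0(\lambda\pm i\varepsilon)u}_\Hi^2\,\mathrm{d}\lambda\le 2\pi c_0^2\norme{u}_\Hi^2$, which is the $\varepsilon$-independent form of \eqref{eq:Kato_smooth}--\eqref{eq:Smooth_Ka65_01}.

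For part \ref{it:res_bound1} I would simply take operator norms in the factorization:
\begin{equation*}
\norme{\Res_H(z)}_{\mathcal{L}(\Hi)}\le\norme{\Res_0(z)}+\norme{\Res_0(z)C}\,\norme{W}\,\norme{(\Id+C\Res_0(z)CW)^{-1}}\,\norme{C\Res_0(z)}\le\varepsilon^{-1}+c_0^2\norme{W}(1+M')\varepsilon^{-1},
\end{equation*}
which is the claimed $\mathrm{c}\varepsilon^{-1}$. For part \ref{it:res_bound2} I would apply the factorization to a fixed $u$, estimate $\norme{\Res_H(z)u}_\Hi^2\le 2\norme{\Res_0(z)u}_\Hi^2+2\norme{\Res_0(z)C}^2\norme{W}^2\norme{(\Id+C\Res_0(z)CW)^{-1}}^2\norme{C\Res_0(z)u}_\Hi^2$, pull the uniform factors $\norme{\Res_0(z)C}^2\le c_0^2\varepsilon^{-1}$ and $\norme{(\Id+\cdots)^{-1}}^2\le(1+M')^2$ out of the integral over $I\subset\R$, and bound the two resulting $\lambda$-integrals by the self-adjoint $L^2$ estimate and the uniform Kato-smoothness estimate, respectively, obtaining $\int_I\norme{\Res_H(\lambda\pm i\varepsilon)u}_\Hi^2\,\mathrm{d}\lambda\le\mathrm{c}\varepsilon^{-1}\norme{u}_\Hi^2$. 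The one delicate step, which I would treat most carefully, is the uniform control of $(\Id+C\Res_0(z)CW)^{-1}$ via \eqref{eq:limit_uniform_I}; once it is in place the rest is bookkeeping, the mechanism being that the product of the two $\varepsilon^{-1/2}$ weighted factors (respectively the $L^2\times L^\infty$ pairing) reproduces exactly the single-resolvent rate $\varepsilon^{-1}$.
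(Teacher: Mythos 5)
Your proof is correct and follows essentially the same route as the paper: once you substitute your identity $(\Id+C\Res_0(z)CW)^{-1}=\Id-C\Res_H(z)CW$ into your factorization, it becomes exactly the twice-iterated resolvent identity \eqref{eq:resolvant2_n} that the paper uses, and both arguments then combine the same four ingredients, namely $\|\Res_0(z)\|_{\mathcal{L}(\Hi)}\le\varepsilon^{-1}$, the $c_0\varepsilon^{-1/2}$ bound of Lemma \ref{lm:tech_est}, the uniform bound on $C\Res_H(z)CW$ from \eqref{eq:limit_uniform_I}, and the uniform ($\varepsilon$-independent) Kato-smoothness $L^2$ bound. The only cosmetic difference is that the paper estimates the three terms of the iterated identity separately, while you package the second and third terms through the Birman--Schwinger inverse.
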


\begin{proof}
See Appendix \ref{App:Operateurs_spectraux}.
\end{proof}

To prove Proposition \ref{prop:resolution}, we will rely on the following construction. Assume that Hypotheses \ref{hyp:valeurs_propres_H} and \ref{hyp:singularités_spectrales} hold. Let $\sigma_{\mathrm{disc},\mathrm{real}}(H)=\{e_1,\dots,e_p\}$ be the set of real, discrete eigenvalues of $H$, with $e_1<\cdots<e_p$. Note that $\sigma_{\mathrm{disc},\mathrm{real}}(H)$ is finite by Hypothesis \ref{hyp:valeurs_propres_H}.  Let $\delta>0$ be the distance between $\sigma_{\mathrm{disc},\mathrm{real}}(H)$ and $\sigma_{\mathrm{ess}}(H)$. For $\varepsilon>0$ small enough, we consider the complex open set $U_\varepsilon$ such that $\sigma(H)\subset U_\varepsilon$ and the boundary of $U_\varepsilon$ is given by (i) small circles surrounding each discrete eigenvalue of $H$ and no other point of $\sigma(H)$, (ii) rectangles whose opposite sides are given by the complex segments $[ e_\ell + \delta/2 + i\varepsilon , e_{\ell+1}-\delta/2 + i\varepsilon ]$ and $[ e_\ell + \delta/2 - i\varepsilon , e_{\ell+1}-\delta/2 - i\varepsilon ]$ and (iii) the curve given by the complex segments $[e_p+\delta/2\pm i\varepsilon,\varepsilon^{-3}\pm i\varepsilon]$, $[e_p+\delta/2-i\varepsilon,e_p+\delta/2+i\varepsilon]$ and the (long) circle arc centered at the origin and joining the complex points $\varepsilon^{-3}+ i\varepsilon$ and $\varepsilon^{-3}- i\varepsilon$. The circles and rectangles defined by (i), (ii) are oriented counterclockwise, while the curve defined by (iii) is oriented clockwise. See Figure \ref{fig:contour}. We denote by $\Gamma_{\mathrm{(i)}}$ the union of curves defined by (i) and by $\Gamma_{\varepsilon,\sharp}$ the unions of curves defined by $\sharp$, where $\sharp$ stands for (ii) or (iii). If $\sigma_{\mathrm{disc},\mathrm{real}}(H)$ is empty, then $\Gamma_{\varepsilon,\mathrm{(ii)}}$ is absent and we replace $e_p-\delta>2$ by $-1$ in the definition of $\Gamma_{\varepsilon,\mathrm{(iii)}}$ (fixing arbitrarily $-1$ as a real number such that $-1<\inf\sigma_{\mathrm{ess}}(H)$).

\begin{figure}[H] 
\begin{center}
\begin{tikzpicture}[scale=0.6, every node/.style={scale=0.9}]

   \draw[->](-3,2) -- (14,2);      
      \draw[->](4,-4) -- (4,8);      
  \draw[-, very thick] (4,2)--(5,2);
    \draw[-, very thick] (7,2)--(13.95,2);
  
  
              

                  
         \draw (4,2) +(3:5.5) arc (3:357:5.5);
         
            \draw[-](9.5,2.3) -- (6.6,2.3);    
                       \draw[->](9.5,2.3) -- (8,2.3);    
            \draw[-](6.6,2.3) -- (6.6,1.7);
            \draw[-](6.6,1.7) -- (9.5,1.7);                  

            \draw[-](3.6,2.3) -- (5.4,2.3);      
            \draw[->](5.4,2.3) -- (4.45,2.3);                  
            \draw[-](3.6,1.7) -- (5.4,1.7);
            \draw[-](3.6,2.3) -- (3.6,1.7);                  
            \draw[-](5.4,2.3) -- (5.4,1.7);

         			\draw(6,2)  node {\tiny{$\times$}};

         \draw (6,2) +(4:0.4) arc (4:364:0.4);
         \draw[->] (6,2) +(90:0.4) arc (90:100:0.4);
         
         			\draw(8,4)  node {\tiny{$\times$}};

         \draw (8,4) +(4:0.4) arc (4:364:0.4);
         \draw[->] (8,4) +(90:0.4) arc (90:100:0.4);

         			\draw(0,2)  node {\tiny{$\times$}};

         \draw (0,2) +(4:0.4) arc (4:364:0.4);
         \draw[->] (0,2) +(90:0.4) arc (90:100:0.4);
         
         			\draw(1,3)  node {\tiny{$\times$}};

         \draw (1,3) +(4:0.4) arc (4:364:0.4);
         \draw[->] (1,3) +(90:0.4) arc (90:100:0.4);
         
         			\draw(5,0)  node {\tiny{$\times$}};

         \draw (5,0) +(4:0.4) arc (4:364:0.4);
         \draw[->] (5,0) +(90:0.4) arc (90:100:0.4);
         
         			\draw(7,-1)  node {\tiny{$\times$}};

         \draw (7,-1) +(4:0.4) arc (4:364:0.4);
         \draw[->] (7,-1) +(90:0.4) arc (90:100:0.4);
         
         			\draw(8.5,2)  node {\tiny{$\times$}};

        \end{tikzpicture}
\caption{ \footnotesize  \textbf{The contour $\Gamma_\varepsilon$.} The crosses and thick lines represent the eigenvalues and essential spectrum of $H$, respectively. }\label{fig:contour}.
\end{center}
\end{figure}
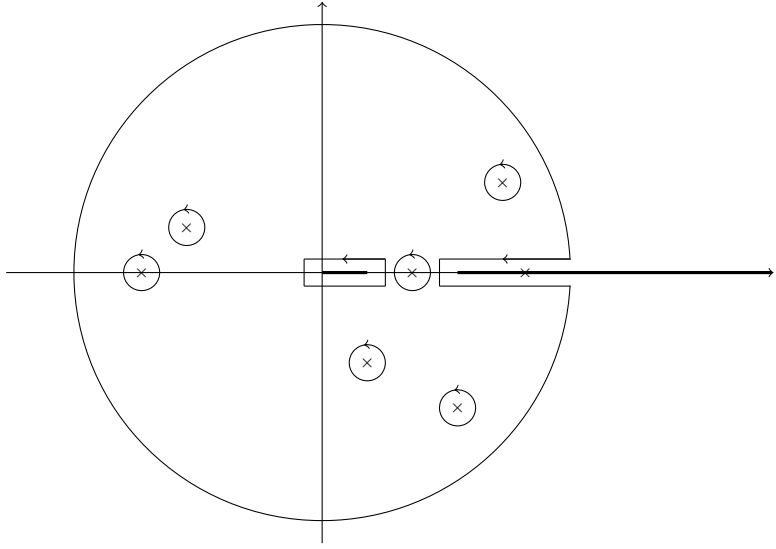

We are now ready to prove Proposition \ref{prop:resolution}. Recall that the function $r$ has been defined in \eqref{eq:def_r}, for some $z_0\in\rho(H)$, $z_0\in\mathbb{C}\setminus\mathbb{R}$.

\begin{proof}[Proof of Proposition \ref{prop:resolution}]

In order to be able to apply Proposition \ref{prop:functional_calculus_reg} for a suitable function $h$, it is convenient to add a decaying term to the regularizing function $r$. Hence we set
\begin{equation}\label{eq:def_tilde_r}
\tilde r(z) := (z-z_0)^{-1}r(z) = (z-z_0)^{-(\nu_1+\cdots+\nu_n+\nu_\infty+1)} \prod_{j=1}^n (z-\lambda_j)^{\nu_j},
\end{equation}
so that $\tilde r$ satisfies \eqref{eq:h_assumpt} in any interval $I\subset\mathbb{R}$. Let $\bar{\C}:=\C\cup\{\infty\}$ denote the Riemann sphere. We note that $\sigma(H)\cup \{\infty\} \subset U_\varepsilon\cup\{\infty\}$ and that $U_\varepsilon\cup\{\infty\}$ is an open set in $\bar{\C}$. The function $\tilde r$ is extended to $\bar\C$ by setting $\tilde r(\infty)=0$. By the Riesz-Dunford functional calculus (see e.g. \cite[Section VIII.9]{DuSc71_01}), using in particular that $\tilde r$ is analytic in a neighborhood of $\sigma(H)\cup\{\infty\}$ in $\bar\C$, we have that
\begin{align}
\tilde r (H) = - \frac{1}{2i\pi}\int_{\Gamma_{\varepsilon}}\tilde r(z) \Res_H(z) \mathrm{d}z. \label{eq:Gamma0}
\end{align}

We consider successively the contributions to this integral from $\Gamma_{\mathrm{(i)}}$, $\Gamma_{\varepsilon,\mathrm{(ii)}}$ and $\Gamma_{\varepsilon,\mathrm{(iii)}}$. The contribution from $\Gamma_{\mathrm{(i)}}$ gives, by definition, the Riesz projection onto the discrete spectral subspace of $H$,
\begin{equation}
- \frac{1}{2i\pi}\int_{\Gamma_{\mathrm{(i)}}}\tilde r(z) \Res_H(z) \mathrm{d}z = \tilde r(H) \Pi_{\mathrm{disc}}(H). \label{eq:Gamma1}
\end{equation}

The contribution from $\Gamma_{\varepsilon,\mathrm{(ii)}}$ gives, for each rectangle, four terms. The integrals over the vertical segments are of order $\mathcal{O}(\varepsilon)$. This easily follows from the fact that $z\mapsto|\tilde r(z)|\|\Res_H(z)\|_{\mathcal{L}(\Hi)}$ is uniformly bounded on these segments, whose lengths are equal to $2\varepsilon$. The sum of the integrals over the horizontal segments can be rewritten as
\begin{align*}
 \frac{1}{2i\pi}\int_{e_\ell+\delta/2}^{e_{\ell+1}-\delta/2} \big( \tilde r(\lambda+i\varepsilon) \Res_H(\lambda+i\varepsilon) - \tilde r(\lambda-i\varepsilon) \Res_H(\lambda-i\varepsilon) \big) \mathrm{d}\lambda.
\end{align*}
Applying Proposition \ref{prop:functional_calculus_reg}, we deduce that
\begin{align*}
&\wlim_{\varepsilon\rightarrow 0^+} \int_{e_\ell+\delta/2}^{e_{\ell+1}-\delta/2} \big( \tilde r(\lambda+i\varepsilon) \Res_H(\lambda+i\varepsilon) - \tilde r(\lambda-i\varepsilon) \Res_H(\lambda-i\varepsilon) \big)\mathrm{d}\lambda \\
&= \int_{e_\ell+\delta/2}^{e_{\ell+1}-\delta/2} \tilde r(\lambda) \big( \Res_H(\lambda+i0^+) -  \Res_H(\lambda-i0^+) \big) \mathrm{d}\lambda.
\end{align*}
Therefore, the contribution to \eqref{eq:Gamma0} from $\Gamma_{\varepsilon,\mathrm{(ii)}}$ gives
\begin{equation}
\wlim_{\varepsilon\rightarrow 0^+} - \frac{1}{2i\pi} \int_{\Gamma_{\varepsilon,\mathrm{(ii)}}}\tilde r(z) \Res_H(z) \mathrm{d}z = \frac{1}{2i\pi} \sum_{\ell=1}^{p-1}\int_{e_\ell+\delta/2}^{e_{\ell+1}-\delta/2} \tilde r(\lambda) \big( \Res_H(\lambda+i0^+) - \Res_H(\lambda-i0^+) \big) \mathrm{d}\lambda. \label{eq:Gamma2}
\end{equation}

It remains to consider the contribution from $\Gamma_{\varepsilon,\mathrm{(iii)}}$. As before, the integral over the small vertical segment is of order $\mathcal{O}(\varepsilon)$. The sum of the integrals over the horizontal segments can be rewritten as
\begin{align*}
 \frac{1}{2i\pi}\int_{e_p+\delta/2}^{\varepsilon^{-3}} \big( \tilde r(\lambda+i\varepsilon) \Res_H(\lambda+i\varepsilon) - \tilde r(\lambda-i\varepsilon) \Res_H(\lambda-i\varepsilon) \big) \mathrm{d}\lambda.
\end{align*}
First, we note that
\begin{align*}
\int_{\varepsilon^{-3}}^\infty \big( \tilde r(\lambda+i\varepsilon) \Res_H(\lambda+i\varepsilon) - \tilde r(\lambda-i\varepsilon) \Res_H(\lambda-i\varepsilon) \big) \mathrm{d}\lambda = \mathcal{O}(\varepsilon).
\end{align*}
Indeed, we have $|\tilde r(\lambda\pm i\varepsilon)|\le \mathrm{c}|\lambda|^{-1}$ for $\lambda$ large enough and hence, by the Cauchy-Schwarz inequality,
\begin{align*}
\int_{\varepsilon^{-3}}^\infty \big|\big\langle u, \tilde r(\lambda\pm i\varepsilon) \Res_H(\lambda\pm i\varepsilon)v\big\rangle_\Hi\big|\mathrm{d}\lambda &\le \mathrm{c}\varepsilon^{\frac32}\|u\|_\Hi \Big( \int_{\varepsilon^{-3}}^\infty \big\|  \Res_H(\lambda\pm i\varepsilon)v\big\|_\Hi^2 \mathrm{d}\lambda \Big )^{\frac12} \\
&\le \mathrm{c}\varepsilon \|u\|_\Hi \|v\|_{\Hi},
\end{align*}
the second inequality being a consequence of Lemma \ref{lm:res_bound}\ref{it:res_bound2}. Hence
\begin{align*}
& \frac{1}{2i\pi}\int_{e_p+\delta/2}^{\varepsilon^{-3}} \big\langle u, \big (\tilde r(\lambda+i\varepsilon) \Res_H(\lambda+i\varepsilon) - \tilde r(\lambda-i\varepsilon) \Res_H(\lambda-i\varepsilon) \big) v\big\rangle_\Hi \mathrm{d}\lambda \\
& = \frac{1}{2i\pi}\int_{e_p+\delta/2}^{\infty} \big\langle u, \big( \tilde r(\lambda+i\varepsilon) \Res_H(\lambda+i\varepsilon) - \tilde r(\lambda-i\varepsilon) \Res_H(\lambda-i\varepsilon) \big) v\big\rangle_\Hi \mathrm{d}\lambda +\mathcal{O}(\varepsilon)\|u\|_\Hi \|v\|_{\Hi}.
\end{align*}
Combining this with Proposition \ref{prop:functional_calculus_reg}, we obtain that
\begin{align*}
&\wlim_{\varepsilon\rightarrow 0^+} \frac{1}{2i\pi}\int_{e_p+\delta/2}^{\varepsilon^{-3}} \big( \tilde r(\lambda+i\varepsilon) \Res_H(\lambda+i\varepsilon) - \tilde r(\lambda-i\varepsilon) \Res_H(\lambda-i\varepsilon) \big) \mathrm{d}\lambda \\
&= \frac{1}{2i\pi}\int_{e_p+\delta/2}^{\infty} \tilde r(\lambda) \big( \Res_H(\lambda+i0^+) - \Res_H(\lambda-i0^+) \big) \mathrm{d}\lambda.
\end{align*}

The integral over the circle arc in $\Gamma_{\varepsilon,\mathrm{(iii)}}$ can be estimated as follows. Denote by $\mathcal{C}_\varepsilon$ this circle arc. For $z\in\mathcal{C}_\varepsilon$, $|\mathrm{Im}(z)|\le \varepsilon_0$, Lemma \ref{lm:res_bound}\ref{it:res_bound1} implies that $\|\Res_H(z)\|_{\mathcal{L}(\Hi)} \le c\varepsilon^{-2}$. Using in addition that $|\tilde r(z)|\le C|z|^{-1}=\mathcal{O}(\varepsilon^3)$ for $z\in\mathcal{C}_\varepsilon$, we obtain
\begin{align*}
\Big | \int_{\mathcal{C}_{\varepsilon}\cap \{|\mathrm{Im}(z)|\le\varepsilon_0\} }\tilde r(z) \Res_H(z) \mathrm{d}z \Big |=\mathcal{O}(\varepsilon).
\end{align*}
For $z\in\mathcal{C}_\varepsilon$, $|\mathrm{Im}(z)|\ge\varepsilon_0$, it suffices to use that $\|\Res_H(z)\|_{\mathcal{L}(\Hi)} \le c$ together with $|\tilde r(z)|\le C|z|^{-1}=\mathcal{O}(\varepsilon^3)$ to conclude that
\begin{align*}
\Big | \int_{\mathcal{C}_{\varepsilon}\cap \{|\mathrm{Im}(z)|\ge\varepsilon_0\} }\tilde r(z) \Res_H(z) \mathrm{d}z \Big |=\mathcal{O}(\varepsilon^3).
\end{align*}

Putting together the previous estimates, we have shown that
\begin{equation}
\wlim_{\varepsilon\rightarrow 0^+} - \frac{1}{2i\pi} \int_{\Gamma_{\varepsilon,\mathrm{(iii)}}}\tilde r(z) \Res_H(z) dz = \frac{1}{2i\pi}\int_{e_p+\delta/2}^{\infty} \tilde r(\lambda) \big( \Res_H(\lambda+i0^+) - \Res_H(\lambda-i0^+) \big) \mathrm{d}\lambda. \label{eq:Gamma3}
\end{equation}
Equations \eqref{eq:Gamma0}, \eqref{eq:Gamma1}, \eqref{eq:Gamma2} and \eqref{eq:Gamma3}, together with the fact that $\Res_H(\lambda+i0^+)=\Res_H(\lambda-i0^+)=\Res_H(\lambda)$ if $\lambda\in\rho(H)$, yield
\begin{align}\label{eq:Gamma_last}
\tilde r(H) &= \tilde r(H) \Pi_{\mathrm{disc}}(H) + \frac{1}{2i\pi}\int_{\sigma_{\mathrm{ess}}(H)} \tilde r(\lambda) \big( \Res_H(\lambda+i0^+) - \Res_H(\lambda-i0^+) \big) \mathrm{d}\lambda. 
\end{align}
It remains to show that one can replace $\tilde r$ by $r$ in the previous equation. For the first two terms, we have that $\tilde r(H)=\Res_H(z_0) r(H)$ by definition, while \eqref{eq:Projection_spectral_calcul_fonctionnel_resolvent_regul} in Proposition \ref{prop:functional_calculus_reg} implies
\begin{align*}
&\int_{\sigma_{\mathrm{ess}}(H)} \tilde r(\lambda) \big( \Res_H(\lambda+i0^+) - \Res_H(\lambda-i0^+) \big) \mathrm{d}\lambda \\
&= \Res_H(z_0) \int_{\sigma_{\mathrm{ess}}(H)} r(\lambda) \big( \Res_H(\lambda+i0^+) - \Res_H(\lambda-i0^+) \big) \mathrm{d}\lambda .
\end{align*}
Hence, applying $(H-z_0)$ to both sides of \eqref{eq:Gamma_last}, we obtain \eqref{eq:resolution}. This concludes the proof.
\end{proof}

	\subsection{Proof of Theorem \ref{thm:caracterisation_etat_disparaissent_à_infini}}\label{subsec:ads}

		Now we prove Theorem \ref{thm:caracterisation_etat_disparaissent_à_infini}. We recall that  $\Hi_\mathrm{p}^\pm(H)$ is the vector space spanned by all generalized eigenstates corresponding to eigenvalues $\lambda\in\mathbb{C}$ such that $\mp\mathrm{Im}\lambda >0$ and that $\Hi_\mathrm{ads}^\pm$ is the closure of $\{ u\in\Hi,~\lim_{t\rightarrow \infty}\|e^{\pm itH}u\|_\Hi=0\}$. We begin with proving the following easy inclusion, $\Hi_\mathrm{p}^\pm(H)\subset\Hi_\mathrm{ads}^\pm(H)$, which holds under the assumption that $H$ has finitely many eigenvalues with finite algebraic multiplicities.

		\begin{proposition}\label{prop:easy_incl}
			Suppose that Hypothesis \ref{hyp:valeurs_propres_H} holds. Then
			\begin{equation*}
				\Hi_\mathrm{p}^\pm(H)\subset\Hi_\mathrm{ads}^\pm(H).
			\end{equation*}
		\end{proposition}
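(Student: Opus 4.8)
The plan is to show that every generalized eigenvector $\varphi$ associated with an eigenvalue $\lambda$ satisfying $\pm\im(\lambda)<0$ lies in the set $\{u\in\Hi,\ \lim_{t\to\pm\infty}\|e^{-itH}u\|_\Hi=0\}$, and then to pass to the span and its closure. Fix such a $\lambda$ and recall that, by Hypothesis \ref{hyp:valeurs_propres_H}, $\lambda$ has finite algebraic multiplicity $\mathrm{m}_\lambda$, so that $\Ran(\Pi_\lambda(H))=\Ker((H-\lambda)^{\mathrm{m}_\lambda})$ is finite-dimensional. By Proposition \ref{prop:spectral_proj}, $\Pi_\lambda(H)$ commutes with $H$, hence with the group $\{e^{-itH}\}_{t\in\R}$; consequently $\Ran(\Pi_\lambda(H))$ is invariant under $e^{-itH}$, and the restriction of $H$ to this subspace is a finite matrix whose only eigenvalue is $\lambda$. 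We may therefore write $H|_{\Ran(\Pi_\lambda(H))}=\lambda\,\Id+N$, where $N$ is nilpotent with $N^{\mathrm{m}_\lambda}=0$.

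First I would use this decomposition to compute, for $\varphi\in\Ran(\Pi_\lambda(H))$,
\begin{equation*}
e^{-itH}\varphi=e^{-it\lambda}\,e^{-itN}\varphi=e^{-it\lambda}\sum_{j=0}^{\mathrm{m}_\lambda-1}\frac{(-it)^j}{j!}N^j\varphi,
\end{equation*}
so that, since $|e^{-it\lambda}|=e^{t\,\im(\lambda)}$, there is a polynomial $P$ (depending on $\varphi$) with
\begin{equation*}
\big\|e^{-itH}\varphi\big\|_\Hi\le e^{t\,\im(\lambda)}P(|t|).
\end{equation*}
In the $+$ case, $\im(\lambda)<0$ forces the right-hand side to tend to $0$ as $t\to+\infty$, the exponential factor dominating the polynomial one; hence $\varphi\in\{u\in\Hi,\ \lim_{t\to+\infty}\|e^{-itH}u\|_\Hi=0\}$. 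The $-$ case is identical, using $\im(\lambda)>0$ and letting $t\to-\infty$.

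To conclude, I would observe that the set $\{u\in\Hi,\ \lim_{t\to\pm\infty}\|e^{-itH}u\|_\Hi=0\}$ is a linear subspace of $\Hi$ (it is closed under linear combinations, since a finite sum of vectors whose evolutions vanish also has vanishing evolution). By the previous step it contains all generalized eigenvectors attached to eigenvalues with $\pm\im(\lambda)<0$, hence it contains their linear span; taking closures gives $\Hi_\mathrm{p}^\pm(H)\subset\Hi_\mathrm{ads}^\pm(H)$.

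I do not expect any serious obstacle here: the only point requiring a little care is that the nilpotent part contributes at most a polynomial factor in $t$, which is harmless against the exponential decay coming from $\im(\lambda)\neq0$. Because $H$ has finitely many eigenvalues of finite algebraic multiplicity, $\Hi_\mathrm{p}^\pm(H)$ is in fact finite-dimensional and already closed, so the closure operation is only a formality.
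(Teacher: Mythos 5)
Your proof is correct, but it takes a different route from the paper's. You diagonalize the problem algebraically: on the finite-dimensional invariant subspace $\Ran(\Pi_\lambda(H))=\Ker((H-\lambda)^{\mathrm{m}_\lambda})$ you write $H=\lambda\,\mathrm{Id}+N$ with $N^{\mathrm{m}_\lambda}=0$, expand $e^{-itH}\varphi=e^{-it\lambda}\sum_{j<\mathrm{m}_\lambda}\frac{(-it)^j}{j!}N^j\varphi$, and conclude from the bound $e^{t\im(\lambda)}P(|t|)\to 0$. The paper instead stays with the holomorphic functional calculus: for $u\in\Ran(\Pi_\lambda(H))$ it represents $e^{-itH}u$ as the contour integral $\frac{1}{2\pi i}\int_\gamma e^{-itz}\Res_H(z)u\,\mathrm{d}z$ over the small circle $\gamma$ from the Riesz projection, and uses $\gamma\subset\C^-$ to get $\norme{e^{-itH}u}_\Hi\le \frac{e^{-t\delta}}{2\pi}\int_\gamma\norme{\Res_H(z)u}_\Hi\,\mathrm{d}z$ with $\delta=\mathrm{dist}(\gamma,\R)>0$. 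The contour argument avoids any discussion of the Jordan structure and produces a pure exponential bound (with rate limited by how close $\gamma$ comes to the real axis), whereas your nilpotent expansion is more elementary, gives the sharp rate $e^{-|\im(\lambda)|\,|t|}$ up to a polynomial factor, and makes explicit why the Jordan block is harmless. Both arguments rest on the same structural fact, available here because the essential spectrum is real: every non-real eigenvalue is isolated with finite algebraic multiplicity, so its Riesz projection exists, commutes with $H$ (hence with $e^{-itH}$), and has range $\Ker((H-\lambda)^{\mathrm{m}_\lambda})$.

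One small point of care: you invoke Proposition \ref{prop:spectral_proj}, whose statement formally assumes the existence of the conjugation operator $J$, while Proposition \ref{prop:easy_incl} assumes only Hypothesis \ref{hyp:valeurs_propres_H}. Since the eigenvalues you treat are non-real, hence isolated, the properties you need (projection, commutation with $H$, identification of the range) are classical facts about Riesz projections and hold without any reference to $J$; citing them in that form keeps your argument within the stated hypotheses.
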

	
		\begin{proof}
			We prove that $\Hi_\mathrm{p}^+(H)\subset\Hi_{\mathrm{ads}}^+(H)$. Let $\lambda\in\sigma_{\mathrm{disc}}(H)$ with $\im(\lambda)<0$ and let $u\in\Ran(\Pi_\lambda(H))$. For all $t>0$, we estimate
			\begin{equation*}
				\norme{e^{-itH}u}_\Hi\leq\frac{1}{2\pi}\int_\gamma e^{t\im(z)}\norme{\Res_H(z)u}_\Hi \mathrm{d}z,
			\end{equation*}
			where $\gamma$ is the circle defined as in \eqref{eq:Projection_de_Riesz_def_pour_H}. Since $\gamma\subset\mathbb{C}^-$, we have
			\begin{equation*}
				\norme{e^{-itH}u}_\Hi\leq\frac{e^{-t\delta}}{2\pi} \int_\gamma \norme{\Res_H(z)u}_\Hi \mathrm{d}z,
			\end{equation*}
			for some $\delta>0$. Hence $u\in\Hi_{\mathrm{ads}}^+(H)$. The proof of $\Hi_\mathrm{p}^-(H)\subset\Hi_{\mathrm{ads}}^-(H)$ is analogous.
			\end{proof}
		
		To prove the converse inclusion, we will use the following easy lemma. Recall that
		\begin{equation*}
			\Pi^\pm_{\mathrm{disc}}(H)=\sum_{\lambda\in\sigma_{\mathrm{disc}}(H),\mp\mathrm{Im}(\lambda)>0}\Pi_\lambda(H), \quad \Pi^0_{\mathrm{disc}}(H)=\sum_{\lambda\in\sigma_{\mathrm{disc}}(H),\mathrm{Im}(\lambda)=0}\Pi_\lambda(H).
		\end{equation*}
		
			\begin{lemma}\label{lm:incl}
			Suppose that Hypothesis \ref{hyp:valeurs_propres_H} holds. Then
			\begin{equation*}
				\Hi_\mathrm{ads}^\pm(H)\subset \mathrm{Ker}\big(\Pi_{\mathrm{disc}}^\mp(H)+\Pi_{\mathrm{disc}}^0(H)\big).
			\end{equation*}
		\end{lemma}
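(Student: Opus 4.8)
The plan is to reduce the statement to a pointwise decay estimate on each generalized eigenspace and then use that the kernel of a bounded operator is closed. I will treat the top sign; the bottom sign is identical after replacing the limit $t\to+\infty$ by $t\to-\infty$. Under Hypothesis~\ref{hyp:valeurs_propres_H} the operators $\Pi_{\mathrm{disc}}^-(H)$ and $\Pi_{\mathrm{disc}}^0(H)$ are finite sums of Riesz projections, hence bounded, so $\mathrm{Ker}(\Pi_{\mathrm{disc}}^-(H)+\Pi_{\mathrm{disc}}^0(H))$ is closed; it therefore suffices to prove the inclusion for $u$ in the (non-closed) set $\{u\in\Hi : \|e^{-itH}u\|_\Hi\to0 \text{ as } t\to+\infty\}$ and then pass to the closure, which is precisely $\Hi_\mathrm{ads}^+(H)$. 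Since $\Pi_{\mathrm{disc}}^-(H)+\Pi_{\mathrm{disc}}^0(H)=\sum_\lambda\Pi_\lambda(H)$, the sum running over the finitely many discrete eigenvalues $\lambda$ with $\mathrm{Im}(\lambda)\ge0$, it is enough to show that $\Pi_\lambda(H)u=0$ for each such $\lambda$.

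Fix a discrete eigenvalue $\lambda$ with $\mathrm{Im}(\lambda)\ge0$ and set $v:=\Pi_\lambda(H)u$. The Riesz projection $\Pi_\lambda(H)$ commutes with the resolvent, hence with $e^{-itH}$, so $e^{-itH}v=\Pi_\lambda(H)e^{-itH}u$ and consequently $\|e^{-itH}v\|_\Hi\le\|\Pi_\lambda(H)\|_{\mathcal{L}(\Hi)}\|e^{-itH}u\|_\Hi\to0$ as $t\to+\infty$. On the finite-dimensional space $\Ran(\Pi_\lambda(H))=\Ker((H-\lambda)^{\mathrm{m}_\lambda})$, the operator $H$ acts as $\lambda\,\mathrm{Id}+N_\lambda$ with $N_\lambda$ nilpotent, so that $e^{-itH}v=e^{-it\lambda}e^{-itN_\lambda}v$. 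Using $\mathrm{Im}(\lambda)\ge0$ and $t>0$, this gives $\|e^{-itH}v\|_\Hi=e^{t\,\mathrm{Im}(\lambda)}\|e^{-itN_\lambda}v\|_\Hi\ge\|e^{-itN_\lambda}v\|_\Hi$.

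The crux is then to observe that $t\mapsto e^{-itN_\lambda}v$ does not tend to $0$ as $t\to+\infty$ unless $v=0$. Indeed, letting $k_0$ be the largest integer with $N_\lambda^{k_0}v\neq0$ (which exists as soon as $v\neq0$, since $N_\lambda^0v=v$), the map $t\mapsto e^{-itN_\lambda}v=\sum_{k=0}^{k_0}\frac{(-it)^k}{k!}N_\lambda^kv$ is a vector-valued polynomial in $t$ whose leading coefficient $\frac{(-i)^{k_0}}{k_0!}N_\lambda^{k_0}v$ is nonzero; hence its norm is a nonzero constant if $k_0=0$ and diverges like $t^{k_0}$ if $k_0\ge1$. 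In either case $\|e^{-itN_\lambda}v\|_\Hi\not\to0$, which combined with the previous bound forces $v=0$. Summing over the finitely many eigenvalues with $\mathrm{Im}(\lambda)\ge0$ yields $(\Pi_{\mathrm{disc}}^-(H)+\Pi_{\mathrm{disc}}^0(H))u=0$, and passing to the closure concludes the argument.

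I expect this last non-vanishing argument to be the only delicate step. For eigenvalues with $\mathrm{Im}(\lambda)>0$ one could settle for the cruder bound $\|e^{-itN_\lambda}v\|_\Hi\ge\|v\|_\Hi/p(t)$, with $p$ a polynomial controlling $\|e^{itN_\lambda}\|_{\mathcal{L}(\Hi)}$, since the exponential factor $e^{t\,\mathrm{Im}(\lambda)}$ then dominates; but for the real eigenvalues collected in $\Pi_{\mathrm{disc}}^0(H)$ this lower bound degenerates to $0$, and one genuinely needs the fact that a nonzero vector-valued polynomial stays bounded away from $0$ at infinity.
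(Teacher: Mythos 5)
Your proof is correct and follows essentially the same route as the paper's: both arguments commute the spectral projection through $e^{-itH}$ and then use that, on the finite-dimensional span of generalized eigenvectors whose eigenvalues satisfy $\mathrm{Im}(\lambda)\ge 0$, the evolution cannot send a nonzero vector to zero as $t\to+\infty$. The differences are cosmetic: the paper invokes Lyapunov's theorem as a black box for this finite-dimensional non-decay fact, whereas you prove it directly (eigenvalue by eigenvalue, via the Jordan decomposition $H=\lambda\,\mathrm{Id}+N_\lambda$ and the observation that a nonzero vector-valued polynomial cannot tend to zero), and you are more explicit than the paper about reducing to the dense set of genuinely decaying vectors and passing to the closure using that the kernel of a bounded operator is closed.
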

		
		\begin{proof}
			Let $u\in \Hi_\mathrm{ads}^+(H)$. We have
			\begin{equation}\label{eq:l1}
				\big(\Pi_{\mathrm{disc}}^-(H)+\Pi_{\mathrm{disc}}^0(H)\big)e^{-itH}u=e^{-itH}\big(\Pi_{\mathrm{disc}}^-(H)+\Pi_{\mathrm{disc}}^0(H)\big)u.
			\end{equation}
			The restriction of $H$ to $\mathrm{Ran}(\Pi_{\mathrm{disc}}^-(H)+\Pi_{\mathrm{disc}}^0(H))=\Hi_{\mathrm{disc}}^-(H)\oplus\Hi_{\mathrm{disc}}^0(H)$ is a linear mapping from a finite dimensional space to itself, whose eigenvalues have non-negative imaginary parts. Hence, by Lyapunov's Theorem,
			\begin{equation*}
				\lim_{t\to\infty}\big\|e^{-itH}\big(\Pi_{\mathrm{disc}}^-(H)+\Pi_{\mathrm{disc}}^0(H)\big)u\big\|_\Hi\neq0 \quad \text{unless}\quad \big(\Pi_{\mathrm{disc}}^-(H)+\Pi_{\mathrm{disc}}^0(H)\big)u=0.
			\end{equation*}
			Since the left-hand-side of \eqref{eq:l1} tends to $0$ as $t\to\infty$ (for $u\in \Hi_\mathrm{ads}^+(H)$), we conclude that indeed $(\Pi_{\mathrm{disc}}^-(H)+\Pi_{\mathrm{disc}}^0(H))u=0$. Hence we have proven that $\Hi_\mathrm{ads}^+(H)\subset \mathrm{Ker}(\Pi_{\mathrm{disc}}^-(H)+\Pi_{\mathrm{disc}}^0(H))$. The proof of $\Hi_\mathrm{ads}^-(H)\subset \mathrm{Ker}(\Pi_{\mathrm{disc}}^+(H)+\Pi_{\mathrm{disc}}^0(H))$ is similar.
%
		\end{proof}
		
		Now we are ready to prove Theorem \ref{thm:caracterisation_etat_disparaissent_à_infini}. We will use the regularizing function $r$ defined in \eqref{eq:def_r}, for some $z_0\in\rho(H)$, $z_0\in\mathbb{C}\setminus\mathbb{R}$.
			
		\begin{proof}[Proof of Theorem \ref{thm:caracterisation_etat_disparaissent_à_infini}]
			In view of Proposition \ref{prop:easy_incl}, it remains to show that $\Hi_\mathrm{ads}^\pm(H)\subset\Hi_\mathrm{p}^\pm(H)$. We prove that $\Hi_\mathrm{ads}^+(H)\subset\Hi_\mathrm{p}^+(H)$, the inclusion $\Hi_\mathrm{ads}^-(H)\subset\Hi_\mathrm{p}^-(H)$ can be proven in the same way.

			Let $u\in\Hi_\mathrm{ads}^+(H)$. By Proposition \ref{prop:resolution}, we can write
\begin{align}\label{eq:l20}
r(H) u = \Pi_{\mathrm{disc}}(H)r(H)u + r_{\mathrm{ess}}(H)u ,
\end{align}
where
\begin{equation*}
r_{\mathrm{ess}}(H):=\wlim_{\varepsilon\rightarrow 0^+}\frac{1}{2\pi i}\int_{\sigma_{\mathrm{ess}}(H)}r(\lambda)\big(\Res_{H}(\lambda+i\varepsilon)-\Res_{H}(\lambda-i\varepsilon)\big)\mathrm{d}\lambda 
\end{equation*}
is a bounded operator.

			Clearly, since $r(H)$ is bounded and commutes with $e^{-itH}$, we have that $r(H)u\in\Hi_\mathrm{ads}^+(H)$. Lemma \ref{lm:incl} then implies that $r(H)u\in	\mathrm{Ker}(\Pi_{\mathrm{disc}}^-(H)+\Pi_{\mathrm{disc}}^0(H))$. Inserting this into	\eqref{eq:l20} gives
\begin{align}\label{eq:l2}
r(H) u = \Pi_{\mathrm{disc}}^+(H)r(H)u + r_{\mathrm{ess}}(H)u .
\end{align}			
			Now we show that $u\in\mathrm{Ker}(r_{\mathrm{ess}}(H))$. We have $\mathrm{Ker}(r_{\mathrm{ess}}(H))=\mathrm{Ran}(r_{\mathrm{ess}}(H)^*)^\perp$ where, by Proposition \ref{prop:Projection_Spectrale_Stone_regul}, 
			\begin{equation*}
					r_{\mathrm{ess}}(H)^*=\wlim_{\varepsilon\rightarrow 0^+}\frac{1}{2\pi i}\int_{\sigma_{\mathrm{ess}}(H)}\overline{r(\lambda)}\big(\Res_{H^*}(\lambda+i\varepsilon)-\Res_{H^*}(\lambda-i\varepsilon)\big)\mathrm{d}\lambda. 
				\end{equation*}
				For all $v=r_{\mathrm{ess}}(H)^*w\in \mathrm{Ran}(r_{\mathrm{ess}}(H)^*)$, we can write
		\begin{equation}\label{eq:l3}
				|\langle v,u\rangle_\Hi|=\big|\langle e^{-itH^*}v,e^{-itH}u\rangle_\Hi\big|\le\big\|e^{-itH^*}v\big\|_\Hi\big\|e^{-itH}u\big\|_\Hi.
			\end{equation}
			By \eqref{eq:algebra_morph_regul} and \eqref{eq:Projection_spectral_calcul_fonctionnel_semi-group_regul} in Proposition \ref{prop:functional_calculus_reg}, we have
			\begin{equation*}
				\big\|e^{-itH^*}v\big\|_\Hi=\big\|e^{-itH^*}r_{\mathrm{ess}}(H)^*w\big\|_\Hi\le\mathrm{c}\|w\|_\Hi.
			\end{equation*}
			Inserting this into \eqref{eq:l3}, letting $t\to\infty$ and using that $u\in\Hi_\mathrm{ads}^+(H)$, we obtain that $\langle v,u\rangle=0$. Hence $u\in\mathrm{Ker}(r_{\mathrm{ess}}(H))$ and therefore \eqref{eq:l2} reduces to
			\begin{align}\label{eq:l4}
r(H) u = \Pi_{\mathrm{disc}}^+(H)r(H)u .
\end{align}
			
			We have proven that $r(H) u$ belongs to $\Hi_{\mathrm{disc}}^+(H)=\Hi_{\mathrm{p}}^+(H)$. Now the Riesz-Dunford functional calculus shows that the restriction of $r(H)$ to $\Hi_{\mathrm{p}}^+(H)$ is bounded invertible. Hence we deduce that $u\in\Hi_{\mathrm{p}}^+(H)$. This concludes the proof.
			\end{proof}

	\subsection{Proof of Theorem \ref{thm:caracterisation_espace_absolument_continu}}\label{subsec:abs_cont}

Recall that $\Hi_{\mathrm{p}}(H)$ is the closure of the vector space spanned by all generalized eigenvectors of $H$, and that the absolutely continuous spectral subspace of $H$ has been defined in Section \ref{subsec:subspaces}. We want to prove that $\Hi_\mathrm{ac}(H)=\Hi_\mathrm{p}(H^*)^\perp$. We begin with the following proposition which only requires that $\Hi_\mathrm{p}(H)$ be finite dimensional.

		\begin{proposition}\label{prop:easy_incl_ac}
			Suppose that Hypothesis \ref{hyp:valeurs_propres_H} holds. Then
			\begin{equation*}
				\Hi_\mathrm{ac}(H)\subset\Ran(\Id-\Pi_\mathrm{p}(H))=\Hi_\mathrm{p}(H^*)^\perp.
			\end{equation*}
		\end{proposition}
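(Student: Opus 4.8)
The plan is to split the statement into two parts: the algebraic identity $\Ran(\Id-\Pi_\mathrm{p}(H))=\Hi_\mathrm{p}(H^*)^\perp$, which follows formally from the relation $\Pi_\mathrm{p}(H)^*=\Pi_\mathrm{p}(H^*)$, and the analytic inclusion $\Hi_\mathrm{ac}(H)\subset\Hi_\mathrm{p}(H^*)^\perp$. Since $\Hi_\mathrm{p}(H^*)^\perp$ is closed and $\Hi_\mathrm{ac}(H)=\mathcal{M}(H)^{\mathrm{cl}}$, the second part reduces to showing $\mathcal{M}(H)\subset\Hi_\mathrm{p}(H^*)^\perp$, and this is where the real work lies.

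For the equality, I would invoke Hypothesis \ref{hyp:valeurs_propres_H} together with Proposition \ref{prop:spectral_proj} and the discussion following it, which guarantee that $\Pi_\mathrm{p}(H)$ is a well-defined bounded projection satisfying $\Pi_\mathrm{p}(H)^*=\Pi_\mathrm{p}(H^*)$. Because $\Pi_\mathrm{p}(H)$ is a projection, $\Ran(\Id-\Pi_\mathrm{p}(H))=\Ker(\Pi_\mathrm{p}(H))$, and the general identity $\Ker(A)=\Ran(A^*)^\perp$ applied to $A=\Pi_\mathrm{p}(H)$ yields $\Ker(\Pi_\mathrm{p}(H))=\Ran(\Pi_\mathrm{p}(H^*))^\perp=\Hi_\mathrm{p}(H^*)^\perp$. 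This step is routine.

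The heart of the proof is the inclusion $\mathcal{M}(H)\subset\Hi_\mathrm{p}(H^*)^\perp$. Fix $u\in\mathcal{M}(H)$; since $\Hi_\mathrm{p}(H^*)$ is spanned by generalized eigenvectors of $H^*$, it suffices to prove $\scal{u}{v}_\Hi=0$ whenever $(H^*-\bar\lambda)^k v=0$ for an eigenvalue $\bar\lambda$ of $H^*$ and some $k\ge1$. Using the adjoint relation and the expansion $e^{itH^*}=e^{it\bar\lambda}e^{it(H^*-\bar\lambda)}$, which truncates on the generalized eigenspace, I would write
\begin{equation*}
\scal{e^{-itH}u}{v}_\Hi=\scal{u}{e^{itH^*}v}_\Hi=e^{it\bar\lambda}\sum_{j=0}^{k-1}\frac{(it)^j}{j!}\scal{u}{(H^*-\bar\lambda)^j v}_\Hi=e^{it\bar\lambda}P(t),
\end{equation*}
where $P$ is a polynomial of degree at most $k-1$ with constant term $P(0)=\scal{u}{v}_\Hi$. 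As $u\in\mathcal{M}(H)$, the map $t\mapsto\scal{e^{-itH}u}{v}_\Hi$ belongs to $L^2(\R)$. Since $|e^{it\bar\lambda}P(t)|=e^{t\,\im(\lambda)}|P(t)|$, this is square-integrable on $\R$ only if $P\equiv0$: when $\im(\lambda)\ne0$ the exponential blows up at $+\infty$ or $-\infty$, and when $\im(\lambda)=0$ a nonzero polynomial is never in $L^2(\R)$. Hence all coefficients of $P$ vanish, in particular $\scal{u}{v}_\Hi=P(0)=0$.

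The argument is elementary, and the only delicate point is the growth analysis in the last step. It is essential that the integral defining $\mathcal{M}(H)$ runs over the whole line: for a nonreal eigenvalue the function $e^{it\bar\lambda}P(t)$ decays at one end but grows at the other, so integrability over $\R$ (rather than over a half-line, as in the dissipative definition of \cite{Da80_01}) is precisely what simultaneously rules out generalized eigenvectors of $H^*$ in both half-planes and on the real axis. I expect no genuine obstacle here; the finiteness in Hypothesis \ref{hyp:valeurs_propres_H} is needed only to make $\Pi_\mathrm{p}(H)$ a bounded operator for the first part, while the inclusion itself requires only that $\Hi_\mathrm{p}(H^*)$ be spanned by generalized eigenvectors.
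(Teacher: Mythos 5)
Your proposal is correct and follows essentially the same route as the paper: both rest on writing $\scal{e^{-itH}u}{v}_\Hi=\langle u,e^{itH^*}v\rangle_\Hi$ for $v\in\Hi_\mathrm{p}(H^*)$ and observing that such an exponential-polynomial function cannot belong to $L^2(\R)$ unless it vanishes identically. If anything, your version is slightly tidier than the paper's: testing against one generalized eigenvector at a time reduces the growth analysis to a single term $e^{it\bar\lambda}P(t)$, and proving the inclusion on $\mathcal{M}(H)$ first and then invoking closedness of $\Hi_\mathrm{p}(H^*)^\perp$ avoids the paper's (harmless) imprecision of applying the $L^2$ condition directly to elements of $\Hi_{\mathrm{ac}}(H)=\mathcal{M}(H)^{\mathrm{cl}}$.
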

		\begin{proof}
			 Let $u\in\Hi_\mathrm{ac}(H)$. We decompose
		\begin{equation*}
			u=\Pi_\mathrm{p}(H)u+(\Id-\Pi_\mathrm{p}(H))u.
		\end{equation*} 
		Suppose by contradiction that $\Pi_\mathrm{p}(H) u\neq 0$. Then there exists $v\in\Hi_\mathrm{p}(H^*)$ such that $\scal{u}{v}_\Hi=1$. Indeed, if $\scal{u}{v}_\Hi=0$ for all $v\in\Hi_\mathrm{p}(H^*)$, then $u\in\Hi_\mathrm{p}(H^*)^\perp=\Ker(\Pi_\mathrm{p}(H))$.
		
		Now, if $v\in\Hi_\mathrm{p}(H^*)$ satisfies $\scal{u}{v}=1$, the map $t\mapsto\scal{e^{-itH}u}{v}_\Hi$ cannot belong to $\mathrm{L}^2(\R,\C)$ since
		\begin{equation*}
		\scal{e^{-itH}u}{v}_\Hi=\big \langle u,e^{itH^*}v \big\rangle_\Hi
		\end{equation*}
		and the restriction of $iH^*$ to $\Hi_\mathrm{p}(H^*)$ is a linear mapping on a finite dimensional vector space.
		
		This proves that $\Pi_\mathrm{p}(H) u=0$ and hence that $u\in\Ran(\Id-\Pi_\mathrm{p}(H))$.
		\end{proof}

To prove the converse inclusion, we will use the following easy lemma. Recall that $r$ has been defined in \eqref{eq:def_r}, for some $z_0\in\rho(H)$, $z_0\in\mathbb{C}\setminus\mathbb{R}$.
		
			\begin{lemma}\label{lm:closed_range}
			Suppose that Hypotheses \ref{hyp:valeurs_propres_H} and \ref{hyp:singularités_spectrales} hold. Then
			\begin{equation*}
				\mathrm{Ran}\big(r(H)(\Id-\Pi_\mathrm{p}(H))\big) \text{ is dense in } \Hi_\mathrm{p}(H^*)^\perp.
			\end{equation*}
		\end{lemma}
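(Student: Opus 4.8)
The plan is to pass to orthogonal complements. Write $Q := \Id - \Pi_{\mathrm p}(H)$; by Proposition \ref{prop:easy_incl_ac} the space $\Ran(Q) = \Hi_{\mathrm p}(H^*)^\perp$ is closed, and $r(H)Q = Q\,r(H)Q$ maps into $\Ran(Q)$, so the claimed density amounts to $\overline{\Ran(r(H)Q)} = \Hi_{\mathrm p}(H^*)^\perp$. Taking orthogonal complements in $\Hi$, and using that $\Hi_{\mathrm p}(H^*)$ is closed, this is equivalent to
\[
\Ker\big( (r(H)Q)^* \big) = \Hi_{\mathrm p}(H^*).
\]
Thus the whole lemma reduces to identifying this kernel.

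Next I would assemble the projection bookkeeping. Since $\Pi_{\mathrm p}(H)$ commutes with $H$ and preserves $\mathcal D(H)$ (Proposition \ref{prop:spectral_proj}), the bounded operator $r(H)$ defined in \eqref{eq:def_r} commutes with $\Pi_{\mathrm p}(H)$, hence with $Q$; taking adjoints, $r(H)^*$ commutes with $Q^* = \Id - \Pi_{\mathrm p}(H^*)$. As $Q^*$ is a projection, $\Ker(Q^*) = \Hi_{\mathrm p}(H^*)$ and $\Ran(Q^*) = \Ker(\Pi_{\mathrm p}(H^*)) = \Ran(\Pi_{\mathrm p}(H))^\perp = \Hi_{\mathrm p}(H)^\perp$; in particular $\Hi_{\mathrm p}(H^*) \cap \Hi_{\mathrm p}(H)^\perp = \{0\}$. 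From $(r(H)Q)^* = Q^* r(H)^*$ one gets $\Ker((r(H)Q)^*) = \{ w \in \Hi : r(H)^* w \in \Hi_{\mathrm p}(H^*) \}$, and the inclusion $\supseteq$ is immediate because $r(H)^*$ preserves $\Hi_{\mathrm p}(H^*)$. For the reverse inclusion I would decompose $w = \Pi_{\mathrm p}(H^*) w + Q^* w$: since $r(H)^* Q^* w \in \Ran(Q^*) = \Hi_{\mathrm p}(H)^\perp$ while $r(H)^* \Pi_{\mathrm p}(H^*) w \in \Hi_{\mathrm p}(H^*)$, the assumption $r(H)^* w \in \Hi_{\mathrm p}(H^*)$ forces $r(H)^* Q^* w \in \Hi_{\mathrm p}(H^*) \cap \Hi_{\mathrm p}(H)^\perp = \{0\}$, i.e. $Q^* w \in \Ker(r(H)^*)$.

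Everything therefore comes down to the single step $\Ker(r(H)^*) \subseteq \Hi_{\mathrm p}(H^*)$: granting it, $Q^* w \in \Hi_{\mathrm p}(H^*) \cap \Hi_{\mathrm p}(H)^\perp = \{0\}$, so $w = \Pi_{\mathrm p}(H^*) w \in \Hi_{\mathrm p}(H^*)$, as needed. To prove it I would use the explicit form of $r$. Because the spectral singularities $\lambda_1, \dots, \lambda_n$ are real, $r(H)^*$ is a function $\bar r(H^*)$ of $H^*$ which factors as the product of $\Res_{H^*}(\bar z_0)^{N}$, with $N := \nu_1 + \cdots + \nu_n + \nu_\infty$, and the polynomial $\prod_{j=1}^n (H^* - \lambda_j)^{\nu_j}$ (these commute on $\mathcal D((H^*)^N)$). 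Since $\bar z_0 \in \rho(H^*)$, the resolvent factor is a bijection of $\Hi$ onto $\mathcal D((H^*)^N)$, so $r(H)^* w = 0$ yields $\Res_{H^*}(\bar z_0)^N w \in \Ker\big( \prod_j (H^* - \lambda_j)^{\nu_j} \big)$. A B\'ezout decomposition $1 = \sum_j a_j(z) \prod_{k \neq j} (z - \lambda_k)^{\nu_k}$ splits any vector in this last kernel into a sum of vectors annihilated by the individual $(H^* - \lambda_j)^{\nu_j}$, i.e. into generalized eigenvectors of $H^*$, whence $\Ker(\prod_j (H^* - \lambda_j)^{\nu_j}) \subseteq \Hi_{\mathrm p}(H^*)$. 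Applying $(H^* - \bar z_0)^N$ and using that $\Hi_{\mathrm p}(H^*)$ is finite-dimensional (Hypothesis \ref{hyp:valeurs_propres_H}) and $H^*$-invariant, I recover $w = (H^* - \bar z_0)^N \Res_{H^*}(\bar z_0)^N w \in \Hi_{\mathrm p}(H^*)$, completing the argument. The main obstacle is exactly this last kernel computation — verifying that $r(H)^*$ annihilates only generalized eigenvectors of $H^*$ — while the remainder is formal manipulation of the commuting projections $Q, Q^*$ together with their ranges and kernels.
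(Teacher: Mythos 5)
Your proposal is correct and is essentially the paper's own argument in expanded form: the paper likewise reduces the density statement to identifying the kernel of the adjoint $\big[r(H)(\Id-\Pi_{\mathrm{p}}(H))\big]^*=\Big[\prod_{j=1}^n(H^*-\lambda_j)^{\nu_j}\Big]\Res_{H^*}(\bar z_0)^{\nu_1+\cdots+\nu_n+\nu_\infty}\,(\Id-\Pi_{\mathrm{p}}(H^*))$ with $\Hi_{\mathrm{p}}(H^*)$, which is exactly your reduction via the commuting projections. Your explicit B\'ezout step merely unpacks what the paper compresses into the single assertion that the restriction of $H^*$ to $\Ran(\Id-\Pi_{\mathrm{p}}(H^*))$ has no eigenvalues, i.e.\ that any vector annihilated by $\prod_{j}(H^*-\lambda_j)^{\nu_j}$ is a generalized eigenvector of $H^*$ and hence lies in $\Hi_{\mathrm{p}}(H^*)$.
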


\begin{proof}
Since $H$ commutes with $\Pi_{\mathrm{p}}(H)$, $r(H)$ preserves $\Hi_\mathrm{p}(H^*)^\perp=\Ran(\Id-\Pi_\mathrm{p}(H))$. The adjoint of $r(H)(\Id-\Pi_\mathrm{p}(H))$ is given by
\begin{equation*}
\big[r(H)(\Id-\Pi_\mathrm{p}(H))\big]^* =  \Big [ \prod_{j=1}^n (H^*-\lambda_j)^{\nu_j} \Big ] \Res_{H^*}(\bar z_0)^{-(\nu_1+\cdots+\nu_n+\nu_\infty)} (\Id-\Pi_\mathrm{p}(H^*)).
\end{equation*}
Since the restriction of $H^*$ to $\Ran(\Id-\Pi_\mathrm{p}(H^*))$ has no eigenvalues, the right-hand-side of the previous equation is an injective operator, which concludes the proof of the lemma.
\end{proof}

Now we prove Theorem \ref{thm:caracterisation_espace_absolument_continu}. 

	\begin{proof}[Proof of Theorem \ref{thm:caracterisation_espace_absolument_continu}]

By Proposition \ref{prop:easy_incl_ac}, we know that $\Hi_\mathrm{ac}(H)\subset\Hi_\mathrm{p}(H^*)^\perp$. To prove that $\Hi_\mathrm{p}(H^*)^\perp\subset\Hi_\mathrm{ac}(H)$, since $\Hi_\mathrm{ac}(H)$ is closed, it suffices by 
Lemma \ref{lm:closed_range} to show that 
\begin{equation*}
\mathrm{Ran}\big(r(H)(\Id-\Pi_\mathrm{p}(H))\big)\subset\Hi_\mathrm{ac}(H).
\end{equation*}
		
		Let $u=r(H)(\Id-\Pi_\mathrm{p}(H))v \in \mathrm{Ran}(r(H)(\Id-\Pi_\mathrm{p}(H)))$. Let $(v_n)_{n\in\mathbb{N}}=(Cw_n)_{n\in\mathbb{N}}$ be a sequence in $\Hi_C=\mathrm{Ran}(C)$ such that $v_n\to v$ as $n\to\infty$ (recall that $\Hi_C$ is dense in $\Hi$). We claim that
		\begin{equation}\label{eq:j1}
			u_n:=r(H)(\Id-\Pi_{\mathrm{p}}(H))v_n\in\Hi_\mathrm{ac}(H).
		\end{equation}
To prove \eqref{eq:j1}, applying Proposition \ref{prop:resolution} and the fact that $(\Id-\Pi_{\mathrm{p}}(H))\Pi_{\mathrm{disc}}(H)=0$, we first observe that, for all $\varphi\in\Hi$,
\begin{align*}
\langle u_n,\varphi\rangle&=\big\langle r(H)(\Id-\Pi_{\mathrm{p}}(H))v_n,\varphi\big\rangle\\
&=-\frac{1}{2i\pi}\int_{\Lambda}r(\lambda)\big\langle\big(\Res_{H}(\lambda+i0^+)-\Res_{H}(\lambda-i0^+)\big)v_n,\psi\big\rangle\mathrm{d}\lambda ,
\end{align*}
where, to simplify notations, we set $\Lambda:=\sigma_{\mathrm{ess}}(H)$ and $\psi:=(\Id-\Pi_{\mathrm{p}}(H^*))\varphi$. 
Next we apply \eqref{eq:Projection_spectral_calcul_fonctionnel_semi-group_regul} in Proposition \ref{prop:functional_calculus_reg} and Plancherel's Theorem to obtain
		\begin{align}
			&\int_\R\abso{\scal{e^{-itH}u_n}{\varphi}}_\Hi^2\mathrm{d}t\nonumber\\
			&=\int_{\Lambda}\abso{\scal{r(\lambda)\left(\Res_H(\lambda-i0^+)-\Res_H(\lambda+i0^+)\right)v_n}{\psi}_\Hi}^2\mathrm{d}\lambda . \label{eq:t1}
		\end{align}
		Using the resolvent identity 
		\begin{equation*}
			\Res_H(z)=\Res_0(z)-\Res_0(z)V\Res_0(z)+\Res_0(z)V\Res_H(z)V\Res_0(z),
		\end{equation*}
		for all $z\in\rho(H)$, we decompose 
		\begin{align}
			&\int_{\Lambda}\abso{r(\lambda)\scal{\left(\Res_H(\lambda-i\varepsilon)-\Res_H(\lambda+i\varepsilon)\right)v_n}{\psi}_\Hi}^2\mathrm{d}\lambda\nonumber\\
			&\leq\int_{\Lambda}\abso{r(\lambda)\scal{\left(\Res_0(\lambda-i\varepsilon)-\Res_0(\lambda+i\varepsilon)\right)v_n}{\psi}_\Hi}^2\mathrm{d}\lambda\nonumber\\
			&\quad+\int_{\Lambda}\abso{r(\lambda)\scal{\left(\Res_0(\lambda-i\varepsilon)V\Res_0(\lambda-i\varepsilon)\right)v_n}{\psi}_\Hi}^2\mathrm{d}\lambda\nonumber\\
			&\quad+\int_{\Lambda}\abso{r(\lambda)\scal{\left(\Res_0(\lambda+i\varepsilon)\Res_0(\lambda+i\varepsilon)\right)v_n}{\psi}_\Hi}^2\mathrm{d}\lambda\nonumber\\
			&\quad+\int_{\Lambda}\abso{r(\lambda)\scal{\left(\Res_0(\lambda-i\varepsilon)V\Res_H(\lambda-i\varepsilon)V\Res_0(\lambda-i\varepsilon)\right)v_n}{\psi}_\Hi}^2\mathrm{d}\lambda\nonumber\\
			&\quad+\int_{\Lambda}\abso{r(\lambda)\scal{\left(\Res_0(\lambda+i\varepsilon)V\Res_H(\lambda+i\varepsilon)V\Res_0(\lambda+i\varepsilon)\right)v_n}{\psi}_\Hi}^2\mathrm{d}\lambda.\label{eq:carac-ses_abso_eq_géné}
		\end{align}
		We claim that each term of the right-hand-side of the previous equation is bounded by $\mathrm{c}_n\norme{\varphi}_\Hi^2$, for some positive constant $\mathrm{c}_n$ depending on $n$. We estimate each term separately.
		
		For the first term in the right-hand-side of \eqref{eq:carac-ses_abso_eq_géné}, it suffices to use that $H_0$ is a self-adjoint operator with purely absolutely continuous spectrum (by Hypothesis \ref{hyp:principe_absorption_limite}), which yields
		\begin{equation}\label{eq:carac_ses_abso_H_0}
			\int_{\Lambda}\abso{r(\lambda)\scal{\left(\Res_0(\lambda-i0^+)-\Res_0(\lambda+i0^+)\right)v_n}{\psi}}^2\mathrm{d}\lambda\leq \mathrm{c}_n\|r\|^2_{L^\infty}\norme{\psi}_\Hi^2.
		\end{equation}
		
		Next, remembering that $v_n=Cw_n$ and $V=CWC$, the second and third terms in the right-hand-side of \eqref{eq:carac-ses_abso_eq_géné} are estimated as 
			\begin{align}
				&\int_{\Lambda}\abso{r(\lambda)\scal{\left(\Res_0(\lambda\pm i\varepsilon)CWC\Res_0(\lambda\pm i\varepsilon)\right)Cw_n}{\psi}}_\Hi^2\mathrm{d}\lambda \nonumber\\
				&\leq\norme{W}_{\mathcal{L}(\Hi)}^2\|r\|_{L^\infty}^2\sup_{\lambda\in\Lambda}\left(\norme{C\Res_0(\lambda\pm i\varepsilon)C}_{\mathcal{L}(\Hi)}^2\right)\norme{w_n}^2_{\Hi}\int_{\Lambda}\norme{C\Res_0(\lambda\mp i\varepsilon)\psi}^2_\Hi\mathrm{d}\lambda \nonumber \\
				&\leq \mathrm{c}_n\|\psi\|^2_\Hi , \label{eq:t2}
			\end{align}
where we used Hypothesis \ref{hyp:principe_absorption_limite} (and \eqref{eq:Smooth_Ka65_01}) in the second inequality.

			Finally, to estimate the fourth and fifth terms in the right-hand-side of \eqref{eq:carac-ses_abso_eq_géné}, we write similarly
			\begin{align}
				&\int_\Lambda\abso{r(\lambda)\scal{\left(\Res_0(\lambda\pm i\varepsilon)CWC\Res_H(\lambda\pm i\varepsilon)CWC\Res_0(\lambda\pm i\varepsilon)\right)Cw_n}{\psi}_\Hi}^2\mathrm{d}\lambda\nonumber\\
				&\leq \norme{W}_{\mathcal{L}(\Hi)}^2\sup_{\lambda\in\Lambda}\left(|r(\lambda)|\norme{C\Res_H(\lambda\pm i\varepsilon)CW}_{\mathcal{L}(\Hi)}^2\right)\sup_{\lambda\in\Lambda}\left(\norme{C\Res_0(\lambda\pm i\varepsilon)C}_{\mathcal{L}(\Hi)}^2\right)\norme{w_n}^2_{\Hi} \nonumber \\
				&\quad\times\int_\Lambda\norme{C\Res_0(\lambda\mp i\varepsilon)\psi}_\Hi^2\mathrm{d}\lambda \nonumber \\
				&\leq \mathrm{c}_n\|\psi\|^2_\Hi, \label{eq:t3}
			\end{align}
			where we used Hypotheses \ref{hyp:principe_absorption_limite} and \ref{hyp:singularités_spectrales} in the second inequality.

Inserting \eqref{eq:carac-ses_abso_eq_géné}--\eqref{eq:t3} into \eqref{eq:t1} and using that $\|\psi\|_\Hi\le\|\varphi\|_\Hi$, we deduce that
		\begin{align*}
			&\int_\R\abso{\scal{e^{-itH}u_n}{\varphi}}_\Hi^2\mathrm{d}t \le C_n\|\varphi\|^2_\Hi.
		\end{align*}
Therefore, $u_n\in\Hi_{\mathrm{ac}}(H)$ for all $n\in\mathbb{N}$. Since $u_n\to u$ in $\Hi$ as $n\to\infty$, and since $\Hi_{\mathrm{ac}}(H)$ is closed, this implies that $u\in\Hi_{\mathrm{ac}}(H)$ and hence the proof of the theorem is complete.
	\end{proof}

Our last proposition shows that in the case where $H$ is dissipative, Hypothesis \ref{hyp:Conjugate_operator} can be dropped in the statement of Theorem \ref{thm:caracterisation_espace_absolument_continu}. Using the notations from Section \ref{sec:dissipative case}, we know that the only possible generalized eigenvectors corresponding to a real eigenvalue of $H$ are eigenvectors in the usual sense, and that they are also eigenvectors of $H_{V_1}$ (and of $H^*$). In other words, if $\lambda\in\mathbb{R}$ is an eigenvalue of $H$, then $\mathrm{Ker}((H-\lambda)^2)=\mathrm{Ker}(H-\lambda)$ and we have
\begin{equation*}
u\in\mathrm{Ker}(H-\lambda)\,\Rightarrow\,u\in\mathrm{Ker}(H_{V_1}-\lambda)\cap\mathrm{Ker}(V_2)\,\Rightarrow\,u\in\mathrm{Ker}(H^*-\lambda),
\end{equation*}
see also Lemma \ref{lm:eigen-diss}. Choosing an orthogonal basis $\{e_1,\dots,e_n\}$ in $\mathrm{Ker}(H-\lambda)$, the spectral projection corresponding to $\lambda$ can then be defined in the usual way, setting
\begin{equation*}
\Pi_\lambda(H)u:=\sum_{j=1}^n\langle e_j,u\rangle e_j, \quad u\in\Hi.
\end{equation*}
One readily checks that $\Pi_\lambda(H)^*=\Pi_\lambda(H^*)$. The spectral projection $\Pi_{\mathrm{p}}(H)$ onto the point spectral subspace of $H$ can then be defined as in Section \ref{subsec:subspaces}. Modifying the previous proof in a straightforward way, we deduce the following.
\begin{proposition}\label{prop:ac-diss}
Suppose that Hypotheses \ref{hyp:principe_absorption_limite}-\ref{hyp:singularités_spectrales} hold and that $H$ is dissipative, $\mathrm{Im}(H)\le 0$. Then
		 	\begin{equation*}
		 		\Hi_\mathrm{ac}(H)=\Ran(\mathrm{Id}-\Pi_\mathrm{p}(H))=\Hi_\mathrm{p}(H^*)^\perp.
		 	\end{equation*}
\end{proposition}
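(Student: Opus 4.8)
The plan is to re-run the proof of Theorem~\ref{thm:caracterisation_espace_absolument_continu} almost verbatim, after isolating the \emph{only} point at which Hypothesis~\ref{hyp:Conjugate_operator} was used there: namely, to guarantee that the point spectral projection $\Pi_{\mathrm{p}}(H)$ is a well-defined bounded operator satisfying $\Pi_{\mathrm{p}}(H)^*=\Pi_{\mathrm{p}}(H^*)$. Every analytic ingredient of that proof --- Proposition~\ref{prop:easy_incl_ac}, Lemma~\ref{lm:closed_range}, and the resolvent-expansion estimates based on Proposition~\ref{prop:resolution} and Proposition~\ref{prop:functional_calculus_reg} --- uses only Hypotheses~\ref{hyp:principe_absorption_limite}, \ref{hyp:valeurs_propres_H}, \ref{hyp:singularit�s_spectrales} together with this adjoint relation. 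Hence it suffices to construct $\Pi_{\mathrm{p}}(H)$ in the dissipative setting, without invoking a conjugation operator, and to check $\Pi_{\mathrm{p}}(H)^*=\Pi_{\mathrm{p}}(H^*)$.

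First I would treat the eigenspaces attached to real eigenvalues. For $\lambda\in\R$ and $u\in\Ker(H-\lambda)$, Lemma~\ref{lm:eigen-diss} gives $V_2u=0$, so that $H^*u=(H_{V_1}+iV_2)u=\lambda u$ and thus $\Ker(H-\lambda)\subset\Ker(H^*-\lambda)$; since $\im(H^*)=V_2\ge0$, the same computation applied to $H^*$ (taking imaginary parts of $\scal{u}{H^*u}_\Hi$) yields the reverse inclusion, whence $\Ker(H-\lambda)=\Ker(H^*-\lambda)$. Next, for $u\in\Ker((H-\lambda)^2)$ I set $w=(H-\lambda)u\in\Ker(H-\lambda)=\Ker(H^*-\lambda)$ and compute $\norme{w}_\Hi^2=\scal{(H-\lambda)u}{w}_\Hi=\scal{u}{(H^*-\lambda)w}_\Hi=0$, so $w=0$ and $\Ker((H-\lambda)^2)=\Ker(H-\lambda)$: there are no Jordan blocks at real eigenvalues. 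Consequently the orthogonal projection onto $\Ker(H-\lambda)=\Ker(H^*-\lambda)$ is a genuine spectral projection $\Pi_\lambda(H)$ which commutes with $H$ and, being self-adjoint, satisfies $\Pi_\lambda(H)^*=\Pi_\lambda(H)=\Pi_\lambda(H^*)$. The role played in the general case by the non-degenerate form $(u,v)\mapsto\scal{Ju}{v}_\Hi$ of \eqref{eq:invertibility_mat2} is here taken over by the ambient scalar product restricted to $\Ker(H-\lambda)$, which is trivially non-degenerate.

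For the non-real eigenvalues, Hypothesis~\ref{hyp:valeurs_propres_H} together with $\sigma_{\mathrm{ess}}(H)\subset[0,\infty)$ forces them to be isolated points of $\sigma(H)$, so the Riesz projections \eqref{eq:Projection_de_Riesz_def_pour_H} are available and satisfy $\Pi_\lambda(H)^*=\Pi_{\bar\lambda}(H^*)$ by the standard theory. Summing over all eigenvalues produces a bounded projection $\Pi_{\mathrm{p}}(H)$ with $\Pi_{\mathrm{p}}(H)^*=\Pi_{\mathrm{p}}(H^*)$, $\Ran(\Pi_{\mathrm{p}}(H))=\Hi_{\mathrm{p}}(H)$ and $\Ker(\Pi_{\mathrm{p}}(H))=\Hi_{\mathrm{p}}(H^*)^\perp$, exactly the structural input required. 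With these facts in hand I would then reproduce, step for step, the proof of Theorem~\ref{thm:caracterisation_espace_absolument_continu}: the inclusion $\Hi_{\mathrm{ac}}(H)\subset\Hi_{\mathrm{p}}(H^*)^\perp$ via Proposition~\ref{prop:easy_incl_ac}, the density of $\Ran(r(H)(\Id-\Pi_{\mathrm{p}}(H)))$ in $\Hi_{\mathrm{p}}(H^*)^\perp$ via Lemma~\ref{lm:closed_range}, and the $L^2$-estimate showing $\Ran(r(H)(\Id-\Pi_{\mathrm{p}}(H)))\subset\Hi_{\mathrm{ac}}(H)$; closing $\Hi_{\mathrm{ac}}(H)$ then delivers the reverse inclusion and the stated equalities.

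The only genuinely new content --- and thus the sole potential obstacle --- is the construction of $\Pi_{\mathrm{p}}(H)$ from dissipativity alone, that is, the absence of Jordan blocks and the coincidence $\Ker(H-\lambda)=\Ker(H^*-\lambda)$ at real eigenvalues established above. I expect this to be routine, since dissipativity rules out precisely the degeneracy that made Hypothesis~\ref{hyp:Conjugate_operator} necessary in Theorem~\ref{thm:caracterisation_espace_absolument_continu}; once $\Pi_{\mathrm{p}}(H)^*=\Pi_{\mathrm{p}}(H^*)$ is secured, the analytic core of the argument is untouched and no appeal to $J$ remains.
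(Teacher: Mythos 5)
Your proposal is correct and follows essentially the same route as the paper: at real eigenvalues of the dissipative operator one replaces the $J$-based projections by orthogonal projections onto $\Ker(H-\lambda)$, using Lemma \ref{lm:eigen-diss} to obtain $\Ker(H-\lambda)=\Ker(H^*-\lambda)$ and the absence of Jordan blocks, handles non-real eigenvalues by Riesz projections, and then reruns the proof of Theorem \ref{thm:caracterisation_espace_absolument_continu} unchanged. Your write-up in fact supplies the two computations (the kernel equality and $\Ker((H-\lambda)^2)=\Ker(H-\lambda)$) that the paper merely asserts before defining $\Pi_\lambda(H)$ as an orthogonal projection.
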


\appendix

\section{Appendix to Section \ref{sec:spectr_sing}}\label{App:Propriétés_point_spect_reg}

In this appendix, we provide the proofs of Propositions \ref{lm:equivalence_point_spectral_regulier} and \ref{prop:reg_spec_local}. They consist in a suitable adaptation of the corresponding proofs in \cite{FaNi18_01} where the particular case of dissipative operators have been considered.

	\begin{proof}[Proof of Proposition \ref{lm:equivalence_point_spectral_regulier}]
		We prove the result in the case of an outgoing regular spectral point, the proof in the case of an incoming regular spectral point is identical.
		
		First we prove that \ref{lm:equivalence_point_spectral_regulier_def_limite}$\Rightarrow$\ref{lm:equivalence_point_spectral_regulier_def_inversible}. Suppose that $\lambda$ is an outgoing regular spectral point of $H$. There exists $\varepsilon_0>0$ such that, for all $\varepsilon\in(0,\varepsilon_0)$, $\Res_H(\lambda+ i\varepsilon)$ exists in $\mathcal{L}(\Hi)$. The resolvent identity gives
		\begin{equation}\label{eq:lm_carac_equiv_point_spec_regu_ID_resolvante}
			(\Id-C\Res_H(\lambda+ i\varepsilon)CW)(\Id+C\Res_0(\lambda+ i\varepsilon)CW)=\Id.
		\end{equation}
		Thus $\Id+C\Res_0(\lambda+i\varepsilon)CW$ is surjective on $\Hi$. Since $C\Res_0(\lambda+i\varepsilon)CW$ is compact, the Fredholm alternative implies that $\Id+C\Res_0(\lambda+i\varepsilon)CW$ is invertible in $\mathcal{L}(\Hi)$. Letting $\varepsilon\to0^+$, using that the limits in \eqref{eq:LAP_lambda} exist, we obtain from \eqref{eq:lm_carac_equiv_point_spec_regu_ID_resolvante} that
		\begin{equation*}
			(\Id-C\Res_H(\lambda+i0^+)CW)(\Id+C\Res_0(\lambda+i0^+)CW)=\Id.
		\end{equation*}
		Thus $\Id+C\Res_0(\lambda+ i0^+)CW$ is surjective, and hence invertible in $\mathcal{L}(\Hi)$ by the Fredholm alternative. 
		
		Next we prove that \ref{lm:equivalence_point_spectral_regulier_def_inversible}$\Rightarrow$\ref{lm:equivalence_point_spectral_regulier_def_limite}. Suppose that $\Id+C\Res_0(\lambda+ i0^+)CW$ is invertible in $\mathcal{L}(\Hi)$. Suppose by contradiction that $\lambda$ is an accumulation point of eigenvalues of $H$ located in $\lambda +i( 0,\infty)$. Then there exists a sequence $(\varepsilon_n)_{n\in\mathbb{N}}$ of positive real numbers such that $\varepsilon_n\to0$ as $n\to\infty$ and, for all $n\in\mathbb{N}$, vectors $u_n\in\Hi$, $\|u_n\|_\Hi=1$, such that
		\begin{equation*}
			(H-(\lambda+ i\varepsilon_n))u_n=0.
		\end{equation*}
		Applying $C\Res_0(\lambda+ i\varepsilon_n)$ to this equations yields
		\begin{equation*}
			C\Res_0(\lambda+ i\varepsilon_n)(H-(\lambda+i\varepsilon_n))u_n=(\Id+C\Res_0(\lambda+ i\varepsilon_n)CW)Cu_n=0.
		\end{equation*}
		Since $\Id+C\Res_0(\lambda+ i0^+)CW$ is invertible in $\mathcal{L}(\Hi)$, for $n$ large enough, $\Id+C\Res_0(\lambda+ i\varepsilon_n)CW$ is also invertible. Therefore $C\Res_0(\lambda+ i\varepsilon_n)(H-(\lambda+ i\varepsilon_n))$ is injective, which is a contradiction since $u_n\neq 0$.
		
		It remains to show that $C\Res_H(\lambda+ i\varepsilon)CW$ converges in $\mathcal{L}(\Hi)$ as $\varepsilon\to0^+$. Since for $\varepsilon>0$ small enough $\Id+C\Res_0(\lambda+ i\varepsilon)CW$ is invertible in $\mathcal{L}(\Hi)$, \eqref{eq:lm_carac_equiv_point_spec_regu_ID_resolvante} gives
		\begin{equation*}
			C\Res_H(\lambda+ i\varepsilon)CW=\big(\Id+C\Res_0(\lambda+ i\varepsilon)CW\big)^{-1}-\Id,
		\end{equation*}
		This proves that
		\begin{equation*}
			C\Res_H(\lambda+ i0^+)CW=\big(\Id+C\Res_0(\lambda+ i0^+)CW\big)^{-1}-\Id
		\end{equation*}
		exists in $\mathcal{L}(\Hi)$.
	\end{proof}

Before proving Proposition \ref{prop:reg_spec_local}, we recall the proof of Lemma \ref{lm:tech_est} which was used several times in the main text.

\begin{proof}[Proof of Lemma \ref{lm:tech_est}]
		Consider for instance the operator $\Res_0\left(\lambda+ i\varepsilon\right)C$. Let $\varepsilon>0$, $u\in\Hi$. We have
		\begin{align*}
			\norme{\Res_0\left(\lambda+i\varepsilon\right)Cu}_\Hi^2&=\scal{\Res_0\left(\lambda+i\varepsilon\right)Cu}{\Res_0\left(\lambda+i\varepsilon\right)Cu}_\Hi\\
			&=\frac{1}{2i\varepsilon}\scal{Cu}{\left\lbrack\Res_0\left(\lambda+i\varepsilon\right)-\Res_0\left(\lambda-i\varepsilon\right)\right\rbrack Cu}_{\Hi}\\
			&=\frac{1}{\varepsilon}\im\left(\scal{u}{C\Res_0\left(\lambda+i\varepsilon\right)Cu}_\Hi\right).
		\end{align*}
		Since the limits \eqref{eq:LAP_lambda} exist, there exists $c_0>0$ such that
		\begin{equation*}
			\sup_{\varepsilon>0}\im\left(\scal{u}{C\Res_0\left(\lambda+i\varepsilon\right)Cu}_\Hi\right)\leq c_0^2\norme{u}_\Hi^2. 
		\end{equation*}
		Hence
		\begin{equation*}
			\norme{\Res_0\left(\lambda+i\varepsilon\right)Cu}_\Hi^2\leq \frac{1}{\varepsilon}c_0^2\norme{u}_\Hi^2.
		\end{equation*}
		This proves the lemma for $\Res_0\left(\lambda+ i\varepsilon\right)C$. The proof for $\Res_0\left(\lambda- i\varepsilon\right)C$ is identical.
\end{proof}	
	
	\begin{proof}[Proof of Proposition \ref{prop:reg_spec_local}]
Again, we prove the result in the case of an outgoing regular spectral point, the proof in the case of an incoming regular spectral point being identical. 

\ref{prop:reg_spec_local_voisi}$\Rightarrow$\ref{prop:reg_spec_local_fixe} is obvious. We prove that \ref{prop:reg_spec_local_fixe}$\Rightarrow$\ref{prop:reg_spec_local_voisi}. Suppose that $\lambda$ is an outgoing regular spectral point. By Proposition \ref{lm:equivalence_point_spectral_regulier},
	\begin{equation*}
		A(\lambda):=\Id+C\Res_0(\lambda +i0^+)CW
	\end{equation*}
	is invertible in $\mathcal{L}(\Hi)$. Since the maps in \eqref{eq:ext_cont_reg} extend by continuity to $\mathring{D}(\lambda,r)\cap\bar\C^\pm$, there exists a compact interval $K_\lambda\subset\R$ whose interior contains $\lambda$ such that, for all $\mu\in K_\lambda$, $A(\mu)$ is invertible. By Proposition \ref{lm:equivalence_point_spectral_regulier}, this implies that each $\mu\in K_\lambda$ is not an accumulation point of eigenvalues located in $\mu +i(0,\infty)$ and that, for all $\mu\in K_\lambda$, $C\Res_H(\mu+i0^+)CW$ exists in $\mathcal{L}(\Hi)$. Finally, the fact that the limit
		\begin{equation*}
			C\Res_H(\mu+ i0^+)CW=\lim_{\varepsilon\rightarrow 0^+} C\Res_H(\mu+ i\varepsilon)CW=(\Id+C\Res_0(\mu+ i0^+)CW)^{-1}-\Id
		\end{equation*}
		is uniform in $\mu\in K_\lambda$ follows from the continuity of the map $z\mapsto(\Id+C\Res_0(z)CW)^{-1}$ on $\mathring{D}(\lambda,r)\cap\bar\C^+$. 
	\end{proof}

	\section{Appendix to Section \ref{sec:spectr_res}}\label{App:Operateurs_spectraux}

\begin{proof}[Proof of Propositions \ref{prop:Projection_Spectrale_Stone} and \ref{prop:functional_calculus}]
To prove the existence of the weak limit in \eqref{eq:Formule de Stone}, we use twice the resolvent formula, which gives for $\varepsilon\in(0,\varepsilon_0)$, $\varepsilon_0>0$ small enough,
		\begin{align}
			\Res_H(\lambda\pm i\varepsilon)
			&=\Res_0(\lambda\pm i\varepsilon)-\Res_0(\lambda\pm i\varepsilon)V\Res_0(\lambda\pm i\varepsilon)+\Res_0(\lambda\pm i\varepsilon)V\Res_H(\lambda\pm i\varepsilon)V\Res_0(\lambda\pm i\varepsilon)\label{eq:resolvant2_n}. 
		\end{align}
Stone's formula for the self-adjoint operator $H_0$ shows that
			\begin{equation}\label{eq:Formule de Stone_for_H_0}
				\wlim_{\varepsilon\rightarrow 0^+}\frac{1}{2\pi i}\int_I\big(\Res_{0}(\lambda+i\varepsilon)-\Res_{0}(\lambda-i\varepsilon)\big)\mathrm{d}\lambda = \mathds{1}_{I}(H_0)
			\end{equation}
in $\mathcal{L}(\Hi)$. Since $C\Res_0(\lambda\pm i\varepsilon)u$ converge in $L^2(I;\Hi)$ as $\varepsilon\rightarrow 0^+$ by Hypothesis \ref{hyp:principe_absorption_limite}, we deduce that the weak limits
			\begin{equation}\label{eq:f1}
				\wlim_{\varepsilon\rightarrow 0^+}\int_I \Res_{0}(\lambda\pm i\varepsilon)V\Res_{0}(\lambda\pm i\varepsilon) \mathrm{d}\lambda = \wlim_{\varepsilon\rightarrow 0^+}\int_I \Res_{0}(\lambda\pm i\varepsilon)CWC\Res_{0}(\lambda\pm i\varepsilon) \mathrm{d}\lambda 
			\end{equation}
exist in $\mathcal{L}( \Hi )$. For the last term from \eqref{eq:resolvant2_n}, we write
			\begin{align}
				&\wlim_{\varepsilon\rightarrow 0^+}\int_I \Res_0(\lambda\pm i\varepsilon)CWC\Res_H(\lambda\pm i\varepsilon)CWC\Res_0(\lambda\pm i\varepsilon) \mathrm{d}\lambda \notag \\
				&= \wlim_{\varepsilon\rightarrow 0^+}\int_I \Res_0(\lambda\pm i0^+)CWC\Res_H(\lambda\pm i\varepsilon)CWC\Res_0(\lambda\pm i0^+) \mathrm{d}\lambda , \label{eq:f2}
			\end{align}
where we used that, for all $u\in\Hi$, $\lambda\mapsto C\Res_0(\lambda\pm i\varepsilon)u$ converge in $L^2(I;\Hi)$ as $\varepsilon\rightarrow 0^+$, together with the fact that $C\Res_H(\lambda\pm i\varepsilon)CW$ is uniformly bounded in $\varepsilon\in(0,\varepsilon_0)$ by \eqref{eq:limit_uniform_I}. Since $\lambda\mapsto C\Res_0(\lambda\pm i0^+)u$ belongs to $L^2(I,\Hi)$ for all $u\in \Hi$, combining the fact that $C\Res_H(\lambda\pm i\varepsilon)CW$ converges to $C\Res_H(\lambda\pm i0^+)CW$ in $\mathcal{L}(\Hi)$ for a.e. $\lambda\in I$ and again that $C\Res_H(\lambda\pm i\varepsilon)CW$ is uniformly bounded in $\varepsilon\in(0,\varepsilon_0)$, we obtain
			\begin{align}
				&\lim_{\varepsilon\rightarrow 0^+} \int_I \Big \langle u , \Res_0(\lambda\pm i\varepsilon)CWC\Res_H(\lambda\pm i\varepsilon)CWC\Res_0(\lambda\pm i\varepsilon) v  \Big \rangle \mathrm{d}\lambda \notag \\
				&= \int_I \Big \langle C \Res_0(\lambda\mp i0^+) u , WC\Res_H(\lambda\pm i0^+)CWC\Res_0(\lambda\pm i0^+) v \Big \rangle \mathrm{d}\lambda , \label{eq:f3}
			\end{align}
			by Lebesgue's dominated convergence theorem.
			
			Equations \eqref{eq:resolvant2_n}--\eqref{eq:f3} prove that the weak limit in \eqref{eq:Formule de Stone} exists. Moreover, for all $u,v\in\Hi$, we have that
			\begin{align*}
				\langle u , \mathds{1}_I(H) v\rangle &= \langle u , \mathds{1}_I(H_0) v\rangle - \frac{1}{2i\pi} \int_I \big \langle C \Res_{0}(\lambda\mp i0^+) u , WC\Res_{0}(\lambda\pm i0^+) v \big \rangle \mathrm{d}\lambda \\
				&\quad + \frac{1}{2i\pi} \int_I \Big \langle C \Res_0(\lambda\mp i0^+) u , WC\Res_H(\lambda\pm i0^+)CWC\Res_0(\lambda\pm i0^+) v \Big \rangle \mathrm{d}\lambda.
			\end{align*}
			By the same argument, we obtain that \eqref{eq:algebra_morphism} is a Banach algebra morphism. Equation \eqref{eq:Projection_spectral_propriété_d'adjonction} is easily proven, while \eqref{eq:Projection_spectral_propriété_de_projection}, \eqref{eq:Projection_spectral_calcul_fonctionnel_semi-group} and \eqref{eq:Projection_spectral_calcul_fonctionnel_resolvent} follow as in \cite{FaFr18_01}.
\end{proof}

\begin{proof}[Proof of Propositions \ref{prop:Projection_Spectrale_Stone_regul} and \ref{prop:functional_calculus_reg}]
The proof has the same structure as that of Propositions \ref{prop:Projection_Spectrale_Stone} and \ref{prop:functional_calculus}, with the following modifications. First, \eqref{eq:Formule de Stone_for_H_0} is replaced by the following argument. We write
\begin{align*}
&\int_I\big(h(\lambda+i\varepsilon)\Res_{0}(\lambda+i\varepsilon)-h(\lambda-i\varepsilon)\Res_{0}(\lambda-i\varepsilon)\big)\mathrm{d}\lambda\\
&=\int_Ih(\lambda)\big(\Res_{0}(\lambda+i\varepsilon)-\Res_{0}(\lambda-i\varepsilon)\big)\mathrm{d}\lambda\\
&\quad+\int_I\big([h(\lambda+i\varepsilon)-h(\lambda)]\Res_{0}(\lambda+i\varepsilon)-[h(\lambda-i\varepsilon)-h(\lambda)]\Res_{0}(\lambda-i\varepsilon)\big)\mathrm{d}\lambda.
\end{align*}
For the first term, since $H_0$ is self-adjoint, we have
\begin{align}
\wlim_{\varepsilon\rightarrow 0^+}\frac{1}{2\pi i}\int_Ih(\lambda)\big(\Res_{0}(\lambda+i\varepsilon)-\Res_{0}(\lambda-i\varepsilon)\big)\mathrm{d}\lambda=h(H_0). \label{eq:v1}
\end{align}
For the second term, we use the mean-value Theorem together with the Cauchy-Schwarz inequality, writing for all $u,v\in\Hi$,
\begin{align}
&\int_I\big|\big\langle u,[h(\lambda\pm i\varepsilon)-h(\lambda)]\Res_{0}(\lambda\pm i\varepsilon)v\big\rangle_\Hi\big|\mathrm{d}\lambda \notag \\
&\le\varepsilon\|u\|_\Hi\Big(\int_I \Big(\sup_{0<\varepsilon<\varepsilon_0}|h'(\lambda\pm i\varepsilon)| \Big)^2\mathrm{d}\lambda\Big)^{\frac12}\Big(\int_I\big\|\Res_{0}(\lambda\pm i\varepsilon)v\big\|_\Hi^2\mathrm{d}\lambda\Big)^{\frac12}. \label{eq:v2}
\end{align}
The first integral is bounded by \eqref{eq:h_assumpt}. The second integral can be rewritten as
\begin{align}
\int_I\big\|\Res_{0}(\lambda\pm i\varepsilon)v\big\|_\Hi^2\mathrm{d}\lambda &=\int_I\big\langle v,\Res_{0}(\lambda\mp i\varepsilon)\Res_{0}(\lambda\pm i\varepsilon)v\big\rangle_\Hi\mathrm{d}\lambda \notag \\
&=\frac{1}{2i\varepsilon}\int_I\big\langle v,\big(\Res_{0}(\lambda-i\varepsilon)-\Res_{0}(\lambda+i\varepsilon)\big)v\big\rangle_\Hi\mathrm{d}\lambda, \label{eq:v3}
\end{align}
from which we deduce that
\begin{equation}\label{eq:int_res_bound}
\Big(\int_I\big\|\Res_{0}(\lambda\pm i\varepsilon)v\big\|_\Hi^2\mathrm{d}\lambda\Big)^{\frac12}\le \mathrm{c}\varepsilon^{-\frac12}\|v\|_\Hi.
\end{equation}
Together with \eqref{eq:v1}, \eqref{eq:v2} and \eqref{eq:v3}, this implies that
\begin{align*}
\wlim_{\varepsilon\rightarrow 0^+}\frac{1}{2\pi i}\int_I\big(h(\lambda+i\varepsilon)\Res_{0}(\lambda+i\varepsilon)-h(\lambda-i\varepsilon)\Res_{0}(\lambda-i\varepsilon)\big)\mathrm{d}\lambda=h(H_0).
\end{align*}

The rest of the proof follows in the same way as in the proof of Propositions \ref{prop:Projection_Spectrale_Stone} and \ref{prop:functional_calculus} (see in particular \eqref{eq:f1} and \eqref{eq:f2}), using that $h(\lambda\pm i\varepsilon)$ and $h(\lambda\pm i\varepsilon)C\Res_H(\lambda\pm i\varepsilon)CW$ are uniformly bounded in $\varepsilon\in(0,\varepsilon_0)$.
\end{proof}
	
	In the proof of the spectral resolution formula stated in Proposition \ref{prop:resolution}, we used Lemma \ref{lm:res_bound} which we now prove. The arguments are similar to those used in the previous proofs.
		
	\begin{proof}[Proof of Lemma \ref{lm:res_bound}]
	To prove \ref{it:res_bound1}, it suffices to use the resolvent equation \eqref{eq:resolvant2_n} together with the fact that $V=CWC$ and the estimates $\|\Res_0(\lambda\pm i\varepsilon)\|_{\mathcal{L}(\Hi)}\le\varepsilon^{-1}$, $\|C\Res_0(\lambda\pm i\varepsilon)\|_{\mathcal{L}(\Hi)}\le\mathrm{c} \varepsilon^{-1/2}$ (see Lemma \ref{lm:tech_est}) and $\|C\Res(H\pm i\varepsilon)CW\|_{\mathcal{L}(\Hi)}\le\mathrm{c}$ by \eqref{eq:limit_uniform_I}.
	
	To prove \ref{it:res_bound2}, we use again \eqref{eq:resolvant2_n}, writing
	\begin{align*}
\int_I \|\Res_H(\lambda\pm i\varepsilon) u\|_\Hi^2\mathrm{d}\lambda &\le2 \int_I \|\Res_0(\lambda\pm i\varepsilon) u\|_\Hi^2\mathrm{d}\lambda \\
&\quad+2\int_I \|\Res_0(\lambda\pm i\varepsilon)CWC\Res_0(\lambda\pm i\varepsilon) u\|_\Hi^2\mathrm{d}\lambda \\
&\quad+2\int_I \|\Res_0(\lambda\pm i\varepsilon)CWC\Res_H(\lambda\pm i\varepsilon)CWC\Res_0(\lambda\pm i\varepsilon) u\|_\Hi^2\mathrm{d}\lambda.
\end{align*}
By \eqref{eq:int_res_bound}, the first term is bounded $\mathrm{c}\varepsilon^{-1}$. The same holds for the second and third terms, using again that $\|C\Res_0(\lambda\pm i\varepsilon)\|_{\mathcal{L}(\Hi)}\le\mathrm{c} \varepsilon^{-1/2}$ and $\|C\Res(H\pm i\varepsilon)CW\|_{\mathcal{L}(\Hi)}\le\mathrm{c}$.
	\end{proof}

\end{document}